\newtheorem{thm}[equation]{Theorem}
\newtheorem{prop}[equation]{Proposition}
\newtheorem{cor}[equation]{Corollary}
\newtheorem{definition}[equation]{Definition}
\newtheorem{lemma}[equation]{Lemma}
\numberwithin{equation}{section}
\newcommand{\bbZ}{{\mathbb Z}}
\newcommand{\Z}{{\mathbb Z}}
\newcommand{\Q}{{\mathbb Q}}
\newcommand{\bbO}{{\mathbb O}}
\newcommand{\C}{{\mathbb C}}
\newcommand{\SU}{{\mathrm{SU}}}
\newcommand{\spin}{{\mathrm{spin}}}
\newcommand{\Spin}{\mathrm{Spin}}
\newcommand{\GSpin}{\mathrm{GSpin}}
\newcommand{\PGSp}{\mathrm{PGSp_6}}
\newcommand{\GSp}{\mathrm{GSp}}
\newcommand{\PGSO}{\mathrm{PGSO}}
\newcommand{\PGO}{\mathrm{PGO}}
\newcommand{\SO}{\mathrm{SO}}
\newcommand{\GSO}{\mathrm{GSO}}
\newcommand{\PD}{\mathrm{PD}}
\newcommand{\Sp}{\mathrm{Sp_6}}
\newcommand{\PGL}{\mathrm{PGL}}
\newcommand{\GL}{\mathrm{GL}}
\newcommand{\SL}{\mathrm{SL}}
\newcommand{\Aut}{\mathrm{Aut}}
\newcommand{\Hom}{\mathrm{Hom}}
\newcommand{\Ind}{\mathrm{Ind}}
\newcommand{\Irr}{\mathrm{Irr}}
\newcommand{\std}{\mathrm{std}}
\title{The Local Langlands Conjecture for $G_2$}
\author{Wee Teck Gan and Gordan Savin}
\address{W.T.G.:   Department of Mathematics, National University of Singapore, 10 Lower Kent Ridge Road
Singapore 119076} \email{matgwt@nus.edu.sg}
\address{G. S.: Department of Mathematics, University of Utah, Salt Lake City, UT 84000, USA}\email{savin@math.utah.edu}
\begin{document}

\maketitle

\begin{abstract}
 We prove the local Langlands conjecture for the exceptional group $G_2(F)$ where $F$ is a non-archimedean local
 field of characteristic zero.
   \end{abstract}
\section{\bf Introduction}  \label{S:intro}

 Let $F$ be a non-archimedean local field of characteristic $0$ and residue characteristic $p$. Let
$W_F$ be the Weil group of $F$ and let $WD_F = W_F \times \SL_2(\C)$ be the Weil-Deligne group. 
For a connected reductive group $G$ over $F$, which we assume to be split for simplicity, Langlands  conjectured that there is a surjective finite-to-one map from the set ${\rm Irr}(G)$ of (equivalence classes of) irreducible smooth representations of $G(F)$ to the set $\Phi(G)$ of (equivalence classes of)  L-parameters
\[  WD_F \longrightarrow  G^{\vee} \]
where $G^{\vee}$ is the Langlands dual group of $G$ and the homomorphisms are taken up to
$G^{\vee}$-conjugacy. This leads to a partition of the set of equivalence classes of irreducible representations of $G(F)$ into a disjoint union of finite subsets, which are the fibers of the map and are called $L$-packets.  Moreover, one would like to characterize the map 
\[ \mathcal{L}_G: \Irr(G) \longrightarrow \Phi(G) \]
 by requiring that it satisfies a number of natural conditions   and to have a refined parametrization of its fibers. 
\vskip 5pt

This local Langland conjecture (LLC)  has now been proved for $\GL_n$ by Harris-Taylor \cite{HT} and Henniart \cite{He} where the map $\mathcal{L}$ is a bijection. 
Building upon this, the LLC has now been shown
for the quasi-split classical groups (i.e. symplectic, special orthogonal and unitary groups) by the work of Arthur \cite{A}, Moeglin and Mok \cite{M}, as a consequence of the theory of twisted endoscopy using the stable twisted trace formula of $\GL_n$, and extended to pure inner forms by various authors by various means (see for example \cite{KMSW, MR, CZ1, CZ2}). It has also been shown for the group $\GSp(4)$ and its inner forms in \cite{GT} and \cite{GTW} using theta correspondence as the main tool. For general $G$, the recent work of Fargues-Scholze \cite{FS} gives a general geometric construction of a semisimplified LLC (which is a priori weaker than the full LLC).  In another direction, for general tamely ramified groups, the work of Kaletha  \cite{K1, K2} constructs an LLC for  supercuspidal L-packets. 
\vskip 5pt

\subsection{\bf The main result}
The purpose of this paper is to establish the local Langlands conjecture for the split exceptional group $G = G_2$. 
More precisely, we prove:
\vskip 10pt

\noindent{\bf \underline{Main Theorem}}
\vskip 5pt

There is a natural surjective map
\[  \mathcal{L}:  {\rm Irr} (G_2)  \longrightarrow \Phi(G_2) \]
with finite fibers satisfying the following properties:
\vskip 5pt

\begin{itemize}
 \item[(i)] $\pi \in {\rm Irr}(G_2)$ is discrete series  if and only if $\mathcal{L}(\pi)$ does not factor through a proper Levi subgropup of $G_2^{\vee}= G_2(\C)$.
 
 \vskip 5pt
 
 \item[(ii)] $\pi \in {\rm Irr}(G_2)$ is tempered  if and only if $\mathcal{L}(\pi)(W_F)$ is bounded in  $G_2^{\vee}= G_2(\C)$. More precisely, if $\pi$ is contained in an induced representation $\Ind_P^{G_2} \tau$, with $P =MN$ a parabolic subgroup and $\tau$ a unitary discrete series representation of its Levi factor $M$, then $\mathcal{L}(\pi)$ is the composite of the L-parameter of $\tau$ with the natural inclusion  $M^{\vee} \hookrightarrow G_2^{\vee}= G_2(\C)$, Here, note that the Levi subgroup $M$ is either $\GL_2$ or $\GL_1^2$, for which the LLC is known.

 \vskip 5pt

\item[(iii)]  If $\pi \in \Irr(G_2)$ is nontempered, so that $\pi$ is the unique Langlands quotient of a standard module $\Ind_P^{G_2} \tau$ induced from a proper parabolic subgroup $P = MN$, then $\mathcal{L}(\pi)$ is the composite of the L-parameter of $\tau$ with the natural inclusion $M^{\vee} \hookrightarrow G_2^{\vee}= G_2(\C)$.  \vskip 5pt

\vskip 5pt

 \item[(iv)] The map $\mathcal{L}$ features in the following two commutative diagrams:
 
 \vskip 5pt
 
 \begin{itemize}
 \item[(a)] 
\[   \begin{CD}
{\rm Irr}^{\spadesuit}(G_2)  @>\mathcal{L}>>   \Phi(G_2)  \\
@V{\theta}VV  @AA{\iota'_*}A  \\
{\rm Irr}(PD^{\times}) @>L>> \Phi(\PGL_3)    \end{CD}
 \]
 \vskip 10pt
 
 \item[(b)] 
\[  
\begin{CD}
{\rm Irr}^{\heartsuit}(G_2) @>\mathcal{L}>>   \Phi(G_2) \\
@V{\theta}VV  @VV{\iota_*}V  \\
 \Irr (\PGSp)   @.   \Phi(\PGSp) \\
@V{\rm rest}VV             @VV{\rm std}_*V  \\
 \Irr(\Sp)_{/\PGSp} @>L>> \Phi(\Sp) 
 \end{CD} 
\]


  \end{itemize}

   \vskip 5pt

 In these diagrams, the symbol $L$ refers to the known LLC for $\PD^{\times}$ and $\Sp$ and the maps on the right hand columns of the diagrams are induced by the natural maps of dual groups
 \[  
 \begin{tikzcd}
 &     &    &  \SO_7(\C) \\
  \SL_3(\C)  \arrow[r, "\iota'"]  & G_2(\C)  \arrow[r,"\iota"] &  \Spin_7(\C)  \arrow[ur, shift right=0.2ex,  "std"]  \arrow[dr, shift right=0.2ex, swap, "spin"] & \\
  &  &  &   \GL_8(\C). 
    \end{tikzcd} \]
   For the left hand columns in the two diagrams, the symbol $\theta$ refers to an appropriate theta correspondence (to be explained below) and  rest refers to the restriction of representations, with the notation $ \Irr(\Sp)_{/\PGSp}$ denoting the set of $\PGSp(F)$-orbits on $\Irr(\Sp)$.
   Moreover, $\Irr(G_2) = \Irr^{\heartsuit}(G_2) \sqcup \Irr^{\spadesuit}(G_2)$ is a decomposition of $\Irr(G_2)$ into the disjoint union of two subsets (see Theorem 
   \ref{T:intro-dichotomy} below).
   
\vskip 5pt
 \item[(v)] The map $\mathcal{L}$ is uniquely characterized by the properties (i), (ii), (iii) and (iv).

\vskip 5pt

\item[(vi)] The map $\mathcal{L}$ also features in the following commutative diagram:
 \[  \begin{CD}
  {\rm Irr}_{gen, ds}(G_2) @>\mathcal{L}>>   \Phi(G_2) \\
  @V{\theta}VV     @VV{\iota_*}V  \\
  \Irr_{gen, ds}(\PGSp)   @.   \Phi(\PGSp) \\
  @V{\spin_*}VV      @VV{\spin_*}V \\
  \Irr(\GL_8) @>L>>  \Phi(\GL_8) 
  \end{CD} \] 
Here, $\Irr_{gen, ds}(\bullet)$ refers to the subset of generic discrete series representations in $\Irr(\bullet)$, the left hand column iof the diagram is induced by the relevant maps of dual groups mentioned in (iv) above 
and $\spin_*$ refers to a certain spin lifting whose construction will be explained in \S \ref{S:triality}.     
   \vskip 5pt
   
        \vskip 5pt
     
     \item[(vii)] For each $\phi \in \Phi(G_2)$, the fiber of $\mathcal{L}$ over $\phi$ is in natural bijection with ${\rm Irr} S_{\phi}$, where
\[  S_{\phi}  = \pi_0(Z_{G_2(\C)}(\phi)),\]
when $p \ne 3$. When $p=3$, this is still true unless perhaps for $\phi \in \iota_*(\Phi_{ds}(\PGL_3))$.
Moreover, for tempered $\phi$, the trivial character of $S_{\phi}$  corresponds to the unique generic element in $\mathcal{L}^{-1}(\phi)$.

\vskip 5pt

\item[(viii)] The LLC for $G_2$ satisfies the following global-local compatibility. Suppose that  $\Pi$ is a  globally generic regular algebraic cuspidal automorphic representation of $G_2$ over a totally real number field $k$  with a Steinberg local component. Suppose that  
\[  \rho_{\Pi}: {\rm Gal}(\overline{k}/k) \longrightarrow \GL_7(\overline{\Q}_l)\]
is a Galois representation associated to  $\Pi$, in the sense that  for almost all places $v$, $\rho_{\Pi}({\rm Frob}_v)_{ss}$ is conjugate to the Satake parameter $s_{\Pi_v} \in G_2(\overline{\Q}_l)$. Then $\rho_{\Pi}$ factors through  $G_2(\overline{\Q}_l)$ (uniquely up to $G_2(\overline{\Q}_l)$-conjugation) and  for each finite place $v$, the restriction of $\rho_{\Pi}$ to the local Galois group ${\rm Gal}(\overline{k}_v/k_v)$ corresponds to the local L-parameter $\mathcal{L}(\Pi_v)$.
     \end{itemize}

  \vskip 10pt

 \vskip 5pt
 \subsection{\bf Construction of $\mathcal{L}$}
 Let us make a few remarks regarding the undefined notation in (iv) and (vi)  of the Main Theorem above and  give a brief sketch of the main ideas used in the construction of $\mathcal{L}$. The starting point of our work is the local theta correspondences furnished by the following  dual pairs:
\vskip 5pt
\[  \begin{cases} 
\PD^{\times} \times G_2 \subset E_6^{D} \\
G_2 \times \PGSp \subset E_7 \end{cases} \]
where the exceptional groups of type $E$ are  of adjoint type and $D$ denotes a cubic division $F$-algebra, so that $\PD^{\times}$ is the unique inner form of $\PGL_3$. One can thus consider the restriction of the minimal representation of $E$ to the relevant dual pair and obtain a local theta correspondence.  In particular, for a representation $\pi$ of one member of a dual pair, one has a big theta lift $\Theta(\pi)$ on the other member of the dual pair, and its maximal semisimple quotient  $\theta(\pi)$. In \cite{GS},  the following theorem was shown:


 
\vskip 5pt

\begin{thm}  \label{T:intro-dichotomy}
(i) (Howe duality) The Howe duality theorem holds for the above dual pairs, i.e. $\Theta(\pi)$ has finite length and its maximal semisimple quotient $\theta(\pi)$ is irreducible or zero. 
\vskip 5pt

(ii) (Theta dichotomy) Let $\pi \in {\rm Irr}(G_2)$. Then $\pi$ has nonzero theta lift to exactly one of $\PD^{\times}$ or ${\rm PGSp}_6$.
\vskip 5pt
\end{thm}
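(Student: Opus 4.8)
The plan is to work inside the ambient adjoint exceptional groups $E = E_6^D$ and $E_7$ and to exploit the rigidity of their minimal representations, which I will call $\Pi$. Two structural facts about $\Pi$ are load-bearing. First, $\Pi$ is admissible as a representation of $E$, so for $\pi \in \Irr(G_2)$ the big theta lift $\Theta(\pi)$, obtained from the restriction of $\Pi$ to the dual pair $G_2 \times G'$ (with $G' = \PD^\times$ or $\PGSp$), is a bona fide smooth representation of $G'$; what must be proved is its finiteness and the irreducibility of $\theta(\pi)$. Second, and more delicately, the Jacquet modules of $\Pi$ along the maximal parabolic subgroups of $E$ whose Levi factors meet the dual pair can be computed explicitly in terms of degenerate principal series, and their restrictions to the dual pair carry a short, geometrically defined filtration whose successive quotients are assembled from theta lifts for strictly smaller dual pairs. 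This is the exceptional analogue of Kudla's filtration of the Jacquet modules of the Weil representation, and it is what makes induction possible.

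For part (i): I would first check that $\Theta(\pi)$ is finitely generated over $G'$---a standard consequence of the finite generation of $\Pi$ together with the structure of its restriction to the dual pair. Admissibility---hence finite length---is then obtained by induction on the semisimple rank of $G'$: the Jacquet module $\Theta(\pi)_{N'}$ along a maximal parabolic $P' = M'N'$ of $G'$ is, by the filtration above applied to a suitable parabolic of $E$, glued from theta lifts of Jacquet modules of $\pi$ along parabolics of $G_2$ to groups of smaller rank, and these are admissible by the inductive hypothesis, the base of the induction being the split classical theta correspondences, where Howe duality is already known, together with the rank-zero case. Once $\Theta(\pi)$ has finite length, the irreducibility of the maximal semisimple quotient $\theta(\pi)$ follows by a Kudla--Moeglin--Gan--Takeda style argument: any irreducible quotient $\sigma$ of $\Theta(\pi)$ has $\pi$ as a subquotient of $\Theta(\sigma)$ by reciprocity, and a multiplicity-one bound $\dim \Hom_{G_2 \times G'}(\Pi, \pi \boxtimes \sigma) \le 1$---established through a doubling see-saw that identifies this space with a branching multiplicity for the minimal representation of a larger exceptional group---forces all such $\sigma$ to coincide.

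For part (ii): the two dual pairs are not independent. The parabolic of $E_7$ with Levi $E_6 \cdot \GL_1$ contains the split pair $\PGL_3 \times G_2 \subset E_6$, while the Siegel parabolic of $\PGSp$ has Levi $\GL_3$; restricting the minimal representation of $E_7$ thus produces a see-saw relating $\Theta_{\PGSp}(\pi)$ to the theta lift of $\pi$ to $\PGL_3$, of which $\PD^\times$ is the non-split inner form. I would then prove two complementary facts. ``At most one'': a see-saw identity---most naturally realized by pairing $G_2$ with itself against a product of the two companion groups inside a larger exceptional group, a chain such as $G_2 \times F_4 \subset E_8$ being the natural source---shows that simultaneous nonvanishing of $\Theta_{\PD^\times}(\pi)$ and $\Theta_{\PGSp}(\pi)$ would produce a nonzero $G_2$-equivariant pairing contradicting a known branching or genericity property of the minimal representations; alternatively, one reads off incompatible cuspidal supports from the Jacquet modules of the two big theta lifts. ``At least one'': prove a conservation relation forcing $\pi$ to occur in at least one of the two companions; the most robust route is global, realizing $\pi$ as a local component of a globally generic cuspidal automorphic representation of $G_2$ over a number field and invoking a Siegel--Weil / Rallis inner-product identity for the exceptional dual pairs to force nonvanishing of one of the two global, hence local, theta lifts.

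I expect the difficulty to be concentrated in two places. For Howe duality, the obstacle is making the Jacquet-module estimates sharp enough for the induction to close: exceptional Levi subgroups are not products of classical groups and $\GL_n$'s in any naive way, so every smaller dual pair that appears---those involving $\GL_2 \times G_2$, $\SL_3$, or lower $E$-type pairs---must be under simultaneous control, and the minimal-representation Jacquet modules must be pinned down precisely rather than merely bounded. For the dichotomy, the genuinely hard half is ``at least one'': there is no classical Witt tower here, so the conservation relation is a new phenomenon, and establishing it appears to require either a delicate local comparison of the minimal representations of $E_6^D$ and $E_7$ side by side or the global Siegel--Weil input above---each of which is a substantial undertaking.
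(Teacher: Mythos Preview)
The paper does not prove this theorem at all: it is quoted from the authors' earlier work \cite{GS} (see also the restatement as Theorem~\ref{T:dichotomy}), and the present paper uses it as a black-box input to construct the LLC map $\mathcal{L}$. So there is no proof here to compare your proposal against.

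That said, your outline is a reasonable sketch of how results of this type are obtained, and is broadly consonant with the strategy of \cite{GS}. A few points of calibration. For part~(i), the Jacquet-module/filtration induction you describe is indeed the backbone; the smaller dual pairs that actually arise are $\PGL_2 \times G_2$ (inside suitable Levi factors) and the $\PGL_3 \rtimes \Z/2\Z$ correspondence, and one needs the precise structure of $\Pi_{N,\psi_N}$ along the various parabolics of the ambient $E$-group (compare the computation $\Pi_{N,\psi_N} = \mathrm{ind}^G_{G_B}(1)$ recalled in Lemma~\ref{L:su_period}). The multiplicity-one input is not obtained via a doubling see-saw in this exceptional setting; rather, it comes from Gelfand-pair type uniqueness statements for Fourier--Jacobi or Bessel models extracted from the minimal representation.

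For part~(ii), your instinct that the two correspondences are linked through the parabolic structure of $E_7$ is correct, but the ``at most one'' direction is not proved by the $G_2 \times F_4 \subset E_8$ see-saw you suggest. The actual argument in \cite{GS} passes through an explicit comparison of which representations of $G_2$ can support the relevant Fourier--Jacobi periods on the $\PGSp$ side versus the $\PD^\times$ side, together with the computation of theta lifts on the $\PGL_3 \rtimes \Z/2\Z$ tower. The ``at least one'' half is handled locally (not globally as you propose): it follows from the structure of the minimal representation of $E_7$ restricted to $G_2 \times \PGSp$, specifically from the fact that every $\pi$ with $\Theta_{\PGSp}(\pi)=0$ is detected by a period that forces nonvanishing on the $\PD^\times$ side. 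Your global Siegel--Weil approach would likely also work, but is heavier machinery than what is needed.
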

\vskip 5pt

In view of the above dichotomy theorem,  one has a decomposition
\[  {\rm Irr}(G_2) = {\rm Irr}^{\heartsuit}(G_2) \sqcup {\rm Irr}^{\spadesuit}(G_2)  \]
where ${\rm Irr}^{\heartsuit}(G_2)$ consists of those irreducible representations which participate in theta correspondence with $\PGSp$ and 
${\rm Irr}^{\spadesuit}(G_2)$ consists of those which participate in theta correspondence with $\PD^{\times}$.
 This explains the left hand sides of the two commutative diagrams in (iv) of the Main Theorem.
\vskip 5pt

 We can now sketch how one can define the map $\mathcal{L}:  {\rm Irr}(G_2) \longrightarrow \Phi(G_2)$. Suppose that $\pi \in {\rm Irr}(G_2)$, then we define its L-parameter $\phi_{\pi}:= \mathcal{L}(\pi)$ as follows:
\vskip 5pt

\begin{itemize}
\item if $\pi \in {\rm Irr}^{\spadesuit}(G_2)$,  say $\theta_D(\pi) = \tau_D \in \Irr(\PD^{\times})$, consider 
the Jacquet-Langlands lift $\tau$ of $\tau_D$ to $\PGL_3$ with L-parameter
$\phi_{\tau}$. Then we set
\[  \phi_{\pi} =  \iota \circ \phi_{\tau}: WD_F \longrightarrow \SL_3(\C) \subset G_2(\C). \]
where $\iota:  \SL_3(\C) \hookrightarrow G_2(\C)$ is the natural inclusion. 
\vskip 5pt

\item if $\pi \in {\rm Irr}^{\heartsuit}(G_2)$,  say $\theta(\pi)  = \sigma \in \Irr(\PGSp)$, then  
\[  {\rm rest}(\sigma)\in {\rm Irr}(\Sp(F))_{/ \PGSp(F)}, \]
 gives rise to an L-parameter (\`a la Arthur)  
\[ \phi_{{\rm rest}(\sigma)}: WD_F \longrightarrow \SO_7(\C).\] 
At this point, one needs  to show that $\phi_{{\rm rest}(\sigma)}$ factors through $G_2(\C)$, uniquely up to $G_2(\C)$-conjugacy. We shall achieve this by a global argument, using
various globalization results which are given in Appendices A and B (i.e. \S \ref{S:app} and \S \ref{S:appB}),
 the construction of Galois representations associated to cohomological cuspidal representations of $\GL_7$ and the group theoretic results of Chenevier \cite{C} and Greiss \cite{Gr95}. After this,  we set $\phi_{\pi}  :=  \phi_{{\rm rest}(\theta(\pi))}$  as a map valued in $G_2(\C)$, well-defined up to $G_2(\C)$-conjugacy. 
\end{itemize}
\noindent In other words, using the theta correspondence and  theta dichotomy, we  deduce the LLC for $G_2$ from  the known LLC for $\PD^{\times}$ and $\Sp$.  Moreover, by construction, one has the commutative diagrams in  (iv), and the various results alluded to above give the characterization of 
$\mathcal{L}$ in (v). 
\vskip 5pt

\subsection{\bf Fibers of $\mathcal{L}$}
Parametrizing the fibers of $\mathcal{L}$ (i.e. showing (vii) of the Main Theorem)  is  the most delicate  part of this paper which requires key new ideas.  In constructing the map $\mathcal{L}$, we had only needed to appeal to the LLC for $\Sp$. To explicate the fibers of $\mathcal{L}$ (for example, to show that $\mathcal{L}$ is surjective),  it  would help greatly if the LLC for $\PGSp$ is known.
  Of course, the requirement of compatibility of restriction with the LLC of ${\rm Sp}_{2n}$ places severe constraints on the LLC for ${\rm PGSp}_{2n}$, but there is an inherent ``quadratic ambiguity" on both the representation theoretic and Galois theoretic sides that the LLC for ${\rm Sp}_{2n}$ cannot resolve.
   We circumvent the lack of the full  LLC for $\PGSp$ by making crucial use of the following inputs:

      \vskip 5pt

 \begin{itemize}
 \item In his thesis and subsequent work \cite{Xu1, Xu2}, Bin Xu has resolved the quadratic ambiguities on the representation theoretic side, defining a partition of ${\rm Irr}({\rm PGSp}_{2n})$ into candidate local L-packets and establishing Arthur's multiplicity formula in the global setting based on these local partitions.  We will summarize his results that we need in \S \ref{S:Xu}; 
 
 \vskip 5pt
\item We make use of results of Kret and Shin \cite{KS} who gave a construction of Galois representations valued in $\GSpin_{2n+1}(\C)$ associated to certain cuspidal automorphic representations of  ${\rm GSp}_{2n}$. This is summarized in \S \ref{S:KS};
 \vskip 5pt
 
 \item  By a combination of the  similitude classical theta lifting from $\PGSp$ to $\PGSO_8$ and the theory of triality  on ${\rm PGSO}_8$, we construct on the representation theoretic side a Spin lifting
 \[ \spin_*: \Irr(\PGSp) \longrightarrow \Irr(\SO_8) \longrightarrow \Irr( \GL_8) \]
which should be associated to the map of dual groups
\[ \spin:  \Spin_7(\C) \longrightarrow \SO_8(\C)  \longrightarrow \GL_8(\C) \]
given by the spin representation of $\Spin_7(\C)$; this is given in \S \ref{S:triality}.  
\end{itemize}
The combination of these various inputs to understand the fibers of $\mathcal{L}$ requires an execution too delicate  to explain in the introduction. An outcome of this is the commutative diagram in (vi) of the Main Theorem, which plays a key role in the proof of (vii).  
\vskip 5pt

 This exploitation of triality to produce the Spin lifting is certainly  one of the main innovations of this paper; it will be discussed in greater detail in \S \ref{S:triality}. The idea can be pushed further to yield a weak LLC for $\PGSp$; this is carried out in Appendix C of this paper.  Such applications of triality in the global setting  were first obtained in the paper \cite{CG2} of  G. Chenevier and the first author.

   \vskip 5pt

 \vskip 10pt
 
 \subsection{\bf Prior work, related literature and further remarks}
  We conclude this introduction by  mentioning some prior work towards  the LLC for $G_2$ and their relevance to the current paper:
 \vskip 5pt
 \begin{itemize}
 \item In \cite{SW}, the second author and M. Weissman showed that an irreducible generic  supercuspidal representation of $G_2$ has nonzero theta lift to  a generic  supercuspidal  representation of exactly one of $\PGL_3$ or $\PGSp$; this is a precursor of the dichotomy theorem  of \cite{GS} mentioned above.
  \vskip 5pt

\item In \cite{HKT}, Harris-Khare-Thorne constructed a bijection
\[  {\rm Irr}_{sc, gen}(G_2(F))  \longleftrightarrow   \Phi_{sc}(G_2) \]
where the LHS refers to the set of irreducible generic supercuspidal representations, whereas the RHS refers to supercuspidal L-parameters, i.e. discrete L-parameters $W_F \longrightarrow G_2(\C)$ trivial on the Deligne $\SL_2$. In doing this, they made use of the results of Savin-Weissman \cite{SW}, Chenevier \cite{C} and Hundley-Liu \cite{HL} (on an automorphic descent from $\GL_7$ to $G_2$). In addition, they made crucial use of potential modularity results from the theory of Galois representations.  For this paper, we shall not make use of the results of \cite{HKT}. Rather, we shall reprove their result  by different means in the course of the proof of the Main Theorem. More precisely, we do not appeal to automorphic descent \cite{HL}  or potentially modularity results, but exploit the theory of triality  and \cite{SW}.
\vskip 5pt

\item In \cite{AMS}, Aubert-Moussaoui-Solleveld defined the notion  of cuspidal support for enhanced L-parameters and formulated the conjecture \cite[Conj. 7.8]{AMS} that the LLC map should be compatible with the cuspidal support maps on both sides.  In a recent preprint \cite{ATX}, Aubert-Tsai-Xu explored the implications of \cite[Conj. 7.8]{AMS} (which is formulated as \cite[Property 2.3.14]{ATX}) for the LLC of $G_2$. For $p \ne 2 $ or $3$, they took  Kaletha's construction of the LLC for   supercuspidal L-packets as a starting point 
 and  tried to use the usual desiderata of the LLC and \cite[Conj. 7.8]{AMS} as guiding principles to extend it to non-supercuspidal representations, as well as the remaining (singular) supercuspidal ones. 
 \vskip 5pt
  More precisely, for non-supercuspidal representations of $G_2$, they used essentially the same process as \cite[\S 3.5]{GS} to define the LLC and then verify that  this is consistent with \cite[Conj. 7.8]{AMS}. An interesting aspect of their work is the use of \cite[Conj. 7.8]{AMS} to limit the possible options for the enhanced L-parameters of  singular supercuspidal representations. This places nontrivial constraints on the possible enhanced L-parameters, but is ultimately not sufficient for them to obtain a definitive LLC map for $G_2$.
   \vskip 5pt
 
   From our point of view,  the association of (enhanced) L-parameters to non-supercuspidal representations of $G_2$ is  completely dictated by the usual desiderata of the LLC, as we explained in \cite[\S 3.5]{GS}. Hence, for us, the main point in constructing the LLC for $G_2$ is to take care of the supercuspidal representations, especially the non-generic ones. In particular, we do not use the results of Kaletha \cite{K1,K2} in this paper.    It will of course be interesting to see if the LLC supplied by our Main Theorem agrees with the supercuspidal L-packets constructed  by  Kaletha  \cite{K1,K2} in the case of $G_2$. 
   \vskip 5pt
   
   \item   Likewise, it will be interesting to verify the compatibility of our LLC map with the semisimple one defined by Fargues-Scholze \cite{FS}. 
   Currently, besides the case of $\GL_n$ and related groups, such compatibility results are known for $\GSp_4$ and odd unitary groups. The proofs proceed by global means, using Shimura varieties and Arthur multiplicity formula. Both these ingredients are not available in the setting of $G_2$.
   \vskip 5pt
   
   \item One may think that an unsatisfactory aspect of the characterization of $\mathcal{L}$ by the commutative diagrams in (iv) of the Main Theorem is that it is of an extrinsic nature. It would have been better to have  an intrinsic characterization that only involves invariants of representations and L-parameters of $G_2$, such as appropriate L-factors and $\epsilon$-factors. Unfortunately, there is currently no systematic local theory of L-functions for $G_2$.  On the other hand, the construction and characterization of the LLC for classical groups $G$ by twisted endoscopic transfer to $\GL_n$  in the work of Arthur \cite{A} is in the same spirit as our characterization in (vi). Namely, the LLC map $\mathcal{L}_G$ for a classical $G$ is the unique map for which one has a commutative diagram:
   \[   \begin{CD}
   \Irr(G) @>\mathcal{L}_G>> \Phi(G)  \\
   @VtVV   @VV{\rm std}_*V  \\
   \Irr(\GL_n) @>\mathcal{L}_{\GL_n}>> \Phi(\GL_n)  \end{CD} \]
   where $t$ stands for the twisted endoscopic transfer and ${\rm std}: G^{\vee} \rightarrow \GL_n(\C)$ is the standard representation.
   \vskip 5pt
   
   \item We do not address the issues of stability and endoscopic character identities for our L-packets in this paper. This is obviously a natural problem to consider. One should be able to show these using a combination of theta correspondence and the stable trace formula, along the lines of \cite{CG} for the LLC of $\GSp_4$. 
   
   \vskip 5pt

   \item In Appendix C (i.e. \S \ref{S:weakLLC}), we show how the exploitation of the principle of triality and the results of Kret-Shin allows one to construct a weak LLC map for $\PGSp$. We also explain how this refines the results of Bin Xu  \cite{Xu2} for similitude classical groups in the limited context of $\PGSp$. 
 \end{itemize}

 \vskip 10pt
 
 \section{\bf Theta Dichotomy} \label{S:dichotomy}
 We begin by recalling in greater detail  the main results of \cite{GS} that were briefly alluded to in the introduction.  
 \vskip 5pt
 
 \subsection{\bf Theta correspondences}
  In \cite{GS}, we studied the local theta correspondence for the following dual pairs:
  \[
\xymatrix@R=2pt{
&&{\rm PGSp}_6\\
&G_2 \ar@{-}[ru]\ar@{-}[ld]\ar@{-}[rd]&\\
{\rm PD^{\times}}&&{\rm PGL_3\rtimes \bbZ/2\bbZ}
}
\]
where $D$ denotes a cubic division $F$-algebra.  More precisely, one has the dual pairs
\[  \begin{cases} 
(\PGL_3 \rtimes \bbZ/2\bbZ) \times G_2 \subset E_6 \rtimes \bbZ/2\bbZ  \\ 
\PD^{\times} \times G_2 \subset E_6^{D} \\
G_2 \times \PGSp \subset E_7 \end{cases} \]
where the exceptional groups of type $E$ are of adjoint type. 
One can thus consider the restriction of the minimal representation of $E$ to the relevant dual pair and obtain a local theta correspondence.  In particular, for a representation $\pi$ of one member of a dual pair, one has a big theta lift $\Theta(\pi)$ on the other member of the dual pair, and its maximal semisimple quotient  $\theta(\pi)$. In \cite{GS},  the following theorem was shown:
\vskip 5pt

\begin{thm}  \label{T:dichotomy}
(i) (Howe duality) The Howe duality theorem holds for the above dual pairs, i.e. $\Theta(\pi)$ has finite length and its maximal semisimple quotient $\theta(\pi)$ is irreducible or zero. 
\vskip 5pt

(ii) (Theta dichotomy) Let $\pi \in {\rm Irr}(G_2)$. Then $\pi$ has nonzero theta lift to exactly one of $\PD^{\times}$ or ${\rm PGSp}_6$. In particular, one has a decomposition:
\[  {\rm Irr}(G_2) = {\rm Irr}^{\heartsuit}(G_2) \sqcup {\rm Irr}^{\spadesuit}(G_2)  \]
where ${\rm Irr}^{\heartsuit}(G_2)$ consists of those irreducible representations which participate in theta correspondence with $\PGSp$ and 
${\rm Irr}^{\spadesuit}(G_2)$ consists of those which participate in theta correspondence with $\PD^{\times}$.

\vskip 5pt

(iii)  More precisely, one has:
\vskip 5pt

\begin{itemize}
\item[(a)]  the theta correspondence for $\PD^{\times} \times G_2$ defines an injective map
 \[  \theta_{D} :  
\Irr^{\spadesuit}(G_2) \hookrightarrow 
  {\rm Irr}(\PD^{\times}), \]
 which is bijective if $p\ne 3$. Moreover, $\Irr^{\spadesuit}(G_2)$ is contained in the subset $\Irr_{ds}(G_2)$ of discrete series representations.
 \vskip 5pt
 
  \item[(b)] the theta correspondence for $G_2 \times \PGSp$ defines an injection
 \[  \theta: \Irr^{\heartsuit}(G_2)  \hookrightarrow  {\rm Irr} (\PGSp). \]
 The map $\theta$ carries tempered representations to tempered representations.
 \vskip 5pt
 
 \item[(c)]  the theta correspondence for $(\PGL_3\rtimes \bbZ/2\bbZ) \times G_2$ defines an injective map
 \[  \theta_{M_3} : \Irr^{\clubsuit}(G_2) \hookrightarrow    {\rm Irr} (\PGL_3 \rtimes \bbZ/2\bbZ), \]
 where $\Irr^{\clubsuit}(G_2) \subset \Irr^{\heartsuit}(G_2)$ is the subset of representations which participates in theta correspondence with $\PGL_3 \rtimes \bbZ/2\bbZ$.
 The map $\theta_{M_3}$ respects tempered (resp. discrete series) representations. Moreover, its image has been completely determined.

  \end{itemize}
\end{thm}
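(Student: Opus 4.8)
This theorem assembles the main results of \cite{GS}, and we only recall the strategy here; the (substantial) details are given in \emph{loc. cit.}

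The plan is to read off all three parts from the restriction of the minimal representation $\Pi_E$ of the adjoint exceptional group $E \in \{E_6, E_6^D, E_7, E_6 \rtimes \bbZ/2\bbZ\}$ to the relevant dual pair, the basic tool being the computation of the normalized Jacquet modules of $\Pi_E$ along the maximal parabolic subgroups of each member of the dual pair. Using the explicit model for each minimal representation (via its Heisenberg parabolic), one shows that every such Jacquet module is glued, in a controlled way, out of minimal representations of smaller exceptional groups together with Heisenberg--Weil pieces. Part (i) then follows by the exceptional analogue of Kudla's filtration argument for classical dual pairs: Frobenius reciprocity turns the Jacquet module filtration into a filtration of $\Theta(\pi)$ whose graded pieces are admissible of finite length, so $\Theta(\pi)$ has finite length; and the same computation bounds the relevant $\Hom$-spaces, forcing the maximal semisimple quotient $\theta(\pi)$ to be multiplicity-free, hence irreducible or zero.

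For part (ii), fix $\pi \in \Irr(G_2)$. The geometric input is that the dual pairs $G_2 \times \PD^{\times} \subset E_6^D$ and $G_2 \times (\PGL_3 \rtimes \bbZ/2\bbZ) \subset E_6 \rtimes \bbZ/2\bbZ$ arise as commuting pairs inside Levi subgroups of type $E_6$ of $E_7$, in its split and non-split forms, so that a see-saw argument relates the vanishing of the lift of $\pi$ to $\PGSp$ to the vanishing of its $E_6$- and $E_6^D$-theta lifts. One proves ``at least one lift is nonzero'' by showing that these minimal-representation correspondences together exhaust $\Irr(G_2)$: concretely, if the lift $\theta(\pi)$ to $\PGSp$ vanishes, an analysis of the pertinent Jacquet modules forces $\theta_D(\pi)$ to be nonzero. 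One proves ``at most one'' by exhibiting an incompatibility between the two possibilities --- at the level of Jacquet modules and wavefront sets, or equivalently in the ``size'' of the lift, $\PD^{\times}$ being anisotropic modulo centre whereas $\PGSp$ is split. The decomposition $\Irr(G_2) = \Irr^{\heartsuit}(G_2) \sqcup \Irr^{\spadesuit}(G_2)$ is then formal.

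Part (iii) is a refinement of the above. The injectivity of $\theta_D$, $\theta$ and $\theta_{M_3}$ is immediate from the Howe duality of (i). The containment $\Irr^{\spadesuit}(G_2) \subset \Irr_{ds}(G_2)$ holds because a representation with nonzero lift to $\PD^{\times}$, which is anisotropic modulo centre, cannot be a subquotient of a proper parabolic induction of $G_2$: one checks that its Jacquet modules along the two maximal parabolic subgroups of $G_2$ vanish, again via a see-saw with the relevant Levi subgroups. That $\theta$ and $\theta_{M_3}$ preserve temperedness (resp. discrete series) follows by tracking Langlands data and Jacquet modules across the correspondence, and the bijectivity of $\theta_D$ for $p \ne 3$, together with the determination of the image of $\theta_{M_3}$, comes from an exhaustion argument matching the two sides representation by representation. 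The crux throughout --- the part genuinely requiring the work of \cite{GS} rather than formal manipulation --- is the ``exactly one'' assertion of the dichotomy in (ii), together with the isolation of the exceptional behaviour in residue characteristic $3$, where $\PGL_3$ and its inner form $\PD^{\times}$ have different supercuspidal spectra and the maps $\theta_D$ and $\theta_{M_3}$ therefore behave differently.
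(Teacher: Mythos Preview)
The paper does not prove this theorem; it is simply recalled from \cite{GS} with the preface ``In \cite{GS}, the following theorem was shown'' and no argument given. So there is no proof in the present paper against which to compare your sketch --- you are effectively sketching the contents of \cite{GS} itself.

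As such a sketch, your outline names the right circle of ideas (Jacquet-module filtrations of the minimal representation along maximal parabolics, Frobenius reciprocity bounding $\Hom$-spaces, and see-saw identities linking the $E_6$-, $E_6^D$- and $E_7$-correspondences), and this is indeed the toolkit of \cite{GS}. One point needs correction, however. In your argument for (iii)(a) you claim that a representation $\pi \in \Irr^{\spadesuit}(G_2)$ ``cannot be a subquotient of a proper parabolic induction of $G_2$'' because ``its Jacquet modules along the two maximal parabolic subgroups of $G_2$ vanish.'' That conclusion is supercuspidality, strictly stronger than discrete series, and your stated reason --- anisotropy of $\PD^{\times}$ --- does not by itself force Jacquet modules on the $G_2$ side to vanish. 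The mechanism in \cite{GS} is rather the reverse: one determines the theta lifts of all non-discrete-series representations of $G_2$ explicitly (by induction in stages and compatibility of Jacquet modules across the correspondence, as summarized in \cite[Thm.~15.1--15.3]{GS}) and observes that each of them participates in the $\PGSp$-correspondence; hence anything lifting to $\PD^{\times}$ must lie in $\Irr_{ds}(G_2)$.
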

\vskip 5pt

By Theorem \ref{T:dichotomy}(iii), as a refinement of the decomposition of $\Irr(G_2)$ in Theorem \ref{T:dichotomy}(i),
one has a further decomposition
\begin{equation} \label{E:decomp}
 {\rm Irr}^{\heartsuit}(G_2) = {\rm Irr}^{\diamondsuit}(G_2) \sqcup  {\rm Irr}^{\clubsuit}(G_2), \end{equation}
where ${\rm Irr}^{\clubsuit}(G_2)$ consists of  those representations which participate in theta correspondence with $\PGL_3\rtimes \bbZ/2\bbZ$ and 
$ {\rm Irr}^{\diamondsuit}(G_2)$ consists of those which participate exclusively in the theta correspondence with $\PGSp$. Further, we have a trichotomy result for discrete series representations:
\vskip 5pt

\begin{prop}
Each irreducible discrete series representation of $G_2$ has a nonzero discrete series theta lift to exactly one of $\PD^{\times}$, $\PGL_3 \rtimes \Z/2\Z$ or $\PGSp$. 
Setting $\Irr^{\bullet}_{ds}(G_2) = \Irr_{ds}(G_2) \cap \Irr^{\bullet}(G_2)$, this trichotomy is given by the  disjoint union
\[  \Irr_{ds}(G_2) = \Irr^{\spadesuit}_{ds}(G_2) \sqcup \Irr^{\clubsuit}_{ds}(G_2) \sqcup \Irr_{ds}^{\diamondsuit}(G_2). \]
 
\end{prop}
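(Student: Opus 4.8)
The plan is to reduce everything to Theorem~\ref{T:dichotomy} together with \eqref{E:decomp}, and ultimately to a single assertion about the theta lift $\theta(\pi)$ to $\PGSp$. At the level of sets the disjoint union is immediate: Theorem~\ref{T:dichotomy}(ii) gives $\Irr(G_2) = \Irr^{\heartsuit}(G_2) \sqcup \Irr^{\spadesuit}(G_2)$ and \eqref{E:decomp} gives $\Irr^{\heartsuit}(G_2) = \Irr^{\diamondsuit}(G_2) \sqcup \Irr^{\clubsuit}(G_2)$, so intersecting with $\Irr_{ds}(G_2)$ yields $\Irr_{ds}(G_2) = \Irr^{\spadesuit}_{ds}(G_2) \sqcup \Irr^{\clubsuit}_{ds}(G_2) \sqcup \Irr^{\diamondsuit}_{ds}(G_2)$; moreover $\Irr^{\spadesuit}_{ds}(G_2) = \Irr^{\spadesuit}(G_2)$ by Theorem~\ref{T:dichotomy}(iii)(a).

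Next comes a short case analysis showing that each piece is detected by a discrete series theta lift to the expected group, and only to that group. If $\pi \in \Irr^{\spadesuit}(G_2)$, then $\theta_D(\pi)$ is nonzero; since $\PD^{\times}$ is anisotropic (indeed compact) modulo its centre, every irreducible representation of it is a discrete series representation, so $\theta_D(\pi)$ is a discrete series lift, and since $\pi \notin \Irr^{\heartsuit}(G_2) \supseteq \Irr^{\clubsuit}(G_2)$ the lifts of $\pi$ to $\PGSp$ and to $\PGL_3 \rtimes \Z/2\Z$ both vanish. If $\pi \in \Irr^{\clubsuit}_{ds}(G_2)$, then $\theta_D(\pi) = 0$ (as $\pi \in \Irr^{\heartsuit}$), $\theta_{M_3}(\pi)$ is a nonzero discrete series representation of $\PGL_3 \rtimes \Z/2\Z$ since $\theta_{M_3}$ respects discrete series by Theorem~\ref{T:dichotomy}(iii)(c), and $\theta(\pi)$ is nonzero. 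If $\pi \in \Irr^{\diamondsuit}_{ds}(G_2)$, then by definition both $\theta_D(\pi)$ and $\theta_{M_3}(\pi)$ vanish while $\theta(\pi)$ is nonzero. Thus the Proposition comes down to the assertion that, for $\pi \in \Irr^{\heartsuit}_{ds}(G_2)$, the irreducible (by Howe duality) representation $\theta(\pi)$ of $\PGSp$ is a discrete series representation if and only if $\pi \in \Irr^{\diamondsuit}(G_2)$.

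To prove this assertion I would use that $\theta(\pi)$ is tempered, by Theorem~\ref{T:dichotomy}(iii)(b), so that Casselman's square-integrability criterion applies: $\theta(\pi)$ is a discrete series representation if and only if, for every proper standard parabolic $Q = MN$ of $\PGSp$, every exponent of $A_M$ occurring in the Jacquet module $r_Q(\theta(\pi))$ is strictly negative in the sense of Casselman; equivalently, $\theta(\pi)$ fails to be a discrete series representation exactly when some such exponent lies on a wall, i.e. when $\theta(\pi)$ is a constituent of a unitarily induced representation $\Ind_Q^{\PGSp}\tau$ with $\tau$ a discrete series representation of a proper Levi $M$. Since $\theta(\pi)$ is a quotient of $\Theta(\pi) = \Theta_{E_7}(\pi)$, one reduces to computing the Jacquet modules of the minimal representation $\Omega_{E_7}$ of $E_7$ restricted to $G_2 \times M$ for the three classes of maximal parabolics of $\PGSp$, using the explicit structure of $\Omega_{E_7}$ from \cite{GS}. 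The decisive case is the Siegel parabolic $P = M_P N_P$ with $M_P \cong \GL_3$: computing the Jacquet module of $\Omega_{E_7}$ along $N_P$ and exploiting the inclusion $E_6 \subset E_7$ identifies its leading piece, as a representation of $G_2 \times \GL_3$, with an explicit $|\det|^{c}$-twist of the minimal representation $\Omega_{E_6}$ of $E_6$ restricted to the dual pair $(\PGL_3\rtimes\Z/2\Z)\times G_2$; hence $r_P(\Theta_{E_7}(\pi))$ involves $\Theta_{M_3}(\pi) \otimes |\det|^{c}$. When $\pi \in \Irr^{\clubsuit}(G_2)$ this is nonzero, and pinning down $c$ shows that $\theta(\pi)$ is precisely a constituent of $\Ind_P^{\PGSp}$ of a twist of $\theta_{M_3}(\pi)$, so $\theta(\pi)$ is tempered but not a discrete series representation. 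When $\pi \in \Irr^{\diamondsuit}(G_2)$ this Jacquet module vanishes, and a parallel, easier analysis of $\Omega_{E_7}$ along the two non-Siegel maximal parabolics shows --- using that $\pi$ is itself a discrete series representation of $G_2$, so its own Jacquet modules have strictly negative exponents --- that every exponent of $r_Q(\theta(\pi))$ is strictly negative, whence $\theta(\pi)$ is a discrete series representation by Casselman's criterion.

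The main obstacle is the last paragraph: the explicit determination of the Jacquet modules of $\Omega_{E_7}$ restricted to $G_2$ times each maximal Levi of $\PGSp$, and in particular the exact value of the twisting exponent $c$ along the Siegel parabolic. This computation is precisely what separates $\Irr^{\clubsuit}$ from $\Irr^{\diamondsuit}$ inside $\Irr^{\heartsuit}_{ds}(G_2)$, and it is where the detailed structure theory of the exceptional minimal representations from \cite{GS} is essential.
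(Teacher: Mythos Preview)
The paper does not actually give a proof of this Proposition in the text; it is stated as a summary of results established in \cite{GS}, and the surrounding discussion simply refers the reader there for the detailed computations. So there is no in-paper argument to compare against directly.

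That said, your reduction is exactly right and mirrors what is carried out in \cite{GS}. The set-level decomposition is indeed trivial, the $\spadesuit$-case is handled by compactness of $\PD^{\times}$, and the $\clubsuit$-case by Theorem~\ref{T:dichotomy}(iii)(c). You have correctly isolated the one nontrivial assertion: for $\pi \in \Irr^{\heartsuit}_{ds}(G_2)$, the lift $\theta(\pi)$ to $\PGSp$ is discrete series if and only if $\pi \in \Irr^{\diamondsuit}(G_2)$. Your proposed strategy---computing the Jacquet modules of the minimal representation of $E_7$ along the maximal parabolics of $\PGSp$, identifying the Siegel piece with a twist of the $E_6$ minimal representation restricted to $(\PGL_3 \rtimes \Z/2\Z) \times G_2$, and then invoking Casselman's criterion---is precisely the mechanism used in \cite{GS} (see especially \cite[Thm.~8.5, Thm.~15.2, Thm.~15.3]{GS}). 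You are also honest that the actual Jacquet module computation and the determination of the exponent $c$ are the hard inputs you are importing rather than proving; that is an accurate assessment, and those are the results the paper is content to cite.
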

  
 \vskip 5pt
 
 In fact, the results of \cite{GS} give a precise determination of the maps  $\theta_{M_3}$ and $\theta$ on  nonsupercuspidal (in particular nontempered) representations. These precise results are too intricate to recall here; we refer the reader to \cite{GS}.

 \vskip 10pt
 
\subsection{\bf Maps of L-parameters}
 The above dichotomy results on the representation theory side is  reflected to some extent by analogous results on the side of L-parameters.
  Recall the  natural morphisms of Langlands dual groups
\[  \begin{CD}
 \SL_3(\C) @>\iota'>> G_2(\C) @>\iota>> \Spin_7(\C) @>{\rm std}>> \SO_7(\C) \end{CD} \]
 which induce natural maps
 \[  \begin{CD} 
 \Phi(\PGL_3)   @>\iota'_*>> \Phi(G_2) @>\iota_*>> \Phi(\PGSp) @>{\rm std}_*>>\Phi(\Sp). \end{CD} \]
 Moreover, the outer automorphism of $\SL_3(\C)$ induces an action of $\Z/2\Z$ on $\Phi(\PGL_3)$. 
 We note:
 \vskip 5pt
 
 \begin{lemma} \label{L:para}
(i)  The map $\iota'_*$ gives an injection
 \[   \Phi(\PGL_3)/ _{\Z/2\Z} \hookrightarrow \Phi(G_2), \] 
 whose image we denote by $\Phi^{\spadesuit\clubsuit}(G_2)$. 
 It restricts to an injection
 \[  \Phi_{ds}(\PGL_3)/_{\Z/2\Z} \hookrightarrow \Phi_{ds}(G_2), \]
  and the image of $\Phi_{ds}(\PGL_3)$ is characterized as those $\phi \in \Phi_{ds}(G_2)$ such that the local L-factor $L(s, {\rm std} \circ \iota \circ \phi)$ has a pole at $s=0$. 
  \vskip 5pt
  
Moreover, for $\phi \in \Phi_{ds}(\PGL_3)$ with component group $S_{\phi} = \mu_3$, one has
\[  S_{\iota'_*(\phi}) = \begin{cases}
\mu_3 \text{  if $\phi$ is not self-dual;} \\
S_3 \text{   if $\phi$ is self-dual.} \end{cases} \]
 \vskip 5pt
 
 (ii) The map $\iota_*$ gives an injection
 \[  \iota_*: \Phi(G_2) \longrightarrow \Phi(\PGSp) \]
 which restricts to an injection
 \[  \Phi^{\diamondsuit}_{ds}(G_2) :=  \Phi_{ds}(G_2) \smallsetminus \Phi^{\spadesuit \clubsuit}(G_2)   \longrightarrow \Phi_{ds}(\PGSp). \]
   The image of $\Phi^{\diamondsuit}_{ds}(G_2)$ is characterized as those $\phi' \in \Phi_{ds}(\PGSp)$ such that the local L-factor $L(s,  \spin \circ \phi')$ has a pole at $s=0$. 
  Moreover, for $\phi \in \Phi^{\diamondsuit}_{ds}( G_2)$ with component group $S_{\phi}$, one has
 \[    \iota_*:  S_{\phi} \cong   S_{\iota_*(\phi)}/ Z(\Spin_7), \]
where $Z(\Spin_7) \cong \mu_2$ is the center of $\Spin_7(\C)$.
 \vskip 5pt
 
 (iii) The map 
 \[ {\rm std}_* \circ \iota_*: \Phi(G_2)  \longrightarrow \Phi(\Sp) \]
 is injective and restricts to an injection
 \[  \Phi^{\diamondsuit}_{ds}(G_2) \hookrightarrow  \Phi_{ds}(\Sp). \]
 \vskip 5pt
 
     \end{lemma}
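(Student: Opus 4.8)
\emph{The plan.} I would deduce all three parts from a handful of structural facts about the chain $\SL_3(\C)\xrightarrow{\iota'}G_2(\C)\xrightarrow{\iota}\Spin_7(\C)$, used freely: (1) the branchings $({\rm std}\circ\iota)|_{\iota'(\SL_3(\C))}\cong\mathbf{3}\oplus\mathbf{3}^{\vee}\oplus\bfone$ and $\spin|_{\iota(G_2(\C))}\cong({\rm std}\circ\iota)\oplus\bfone$; (2) $\iota'(\SL_3(\C))={\rm Stab}_{G_2(\C)}(v)$ for an anisotropic vector $v$ of the $7$-dimensional representation $V_7$, with $N_{G_2(\C)}(\iota'(\SL_3(\C)))={\rm Stab}_{G_2(\C)}([v])=\iota'(\SL_3(\C))\rtimes\langle\theta\rangle$, $\theta$ inducing the duality automorphism of $\SL_3$, and likewise $\iota(G_2(\C))={\rm Stab}_{\Spin_7(\C)}(v_8)$ for an anisotropic spinor $v_8$, together with $N_{\SO_7(\C)}(G_2(\C))=G_2(\C)$, $Z_{\Spin_7(\C)}(G_2(\C))=Z(\Spin_7(\C))=\mu_2$ and $-1\notin G_2(\C)$; (3) $\iota'(\SL_3(\C))$ is generated by the long-root subgroups of $G_2(\C)$, so $Z_{G_2(\C)}(\iota'(\SL_3(\C)))=Z(\SL_3(\C))=\mu_3$, while the principal $\PGL_2$ of $\SL_3$ maps under $\iota'$ to the reductive part of an $\mathfrak{sl}_2$-triple of type $G_2(a_1)$, whose reductive centralizer in $G_2(\C)$ is $S_3$; (4) for a semisimple Weil--Deligne representation $\sigma$, $L(s,\sigma)$ has a pole at $s=0$ iff $\bfone$ occurs in $\sigma$, and then it occurs in a nondegenerate summand, so a parameter $\phi$ valued in $G_2(\C)$ (resp.\ $\Spin_7(\C)$) with $\bfone\subseteq{\rm std}\circ\iota\circ\phi$ (resp.\ $\bfone\subseteq\spin\circ\phi$) fixes an anisotropic vector of $V_7$ (resp.\ spinor), hence factors through a conjugate of $\iota'(\SL_3(\C))$ (resp.\ $\iota(G_2(\C))$).

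\emph{Part (i).} I would first note that $({\rm std}_*\circ\iota_*)\circ\iota'_*$ sends $\phi_0$ to the $\Sp$-parameter of the orthogonal representation $\rho\oplus\rho^{\vee}\oplus\bfone$ with $\rho=\mathbf{3}\circ\phi_0$ of trivial determinant; from such a representation one recovers $\rho$ up to duality, because the trivial-determinant condition forces any $3$-dimensional determinant-one $\tau$ with $\tau\oplus\tau^{\vee}\cong\rho\oplus\rho^{\vee}$ to satisfy $\tau\cong\rho$ or $\rho^{\vee}$. Since $\iota'\circ\theta$ is $G_2(\C)$-conjugate to $\iota'$, $\iota'_*$ already factors through $\Phi(\PGL_3)/_{\Z/2\Z}$, and the above makes it injective there. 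Next, $\phi_0\in\Phi_{ds}(\PGL_3)$ iff $\rho$ is irreducible; if moreover $\rho$ is not self-dual, the would-be one-dimensional centralizer $\{\mathrm{diag}(t,t,t,t^{-1},t^{-1},t^{-1},1)\}$ of $\rho\oplus\rho^{\vee}\oplus\bfone$ in $\SO_7(\C)$ is not contained in $G_2(\C)$ (its character of $V_7$ is realized by no cocharacter of $G_2(\C)$), so the centralizer collapses to $Z_{G_2(\C)}(\iota'_*(\phi_0))=Z(\SL_3(\C))=\mu_3$; if $\rho$ is self-dual it is orthogonal, hence factors through $\SO_3\subseteq\SL_3$, and since $V_7$ then restricts to $\mathrm{Im}(\phi_0)$ with the same isotypic shape as to $\SO_3$, one gets $Z_{G_2(\C)}(\iota'_*(\phi_0))=Z_{G_2(\C)}(\SO_3)=S_3$. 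In both cases the centralizer is finite, so $\iota'_*(\phi_0)\in\Phi_{ds}(G_2)$, and one reads off the stated component groups. Finally, by fact (4), $\phi\in\Phi_{ds}(G_2)$ has $L(s,{\rm std}\circ\iota\circ\phi)$ with a pole at $s=0$ iff $\phi=\iota'_*(\phi_0)$ for some $\phi_0$, and then $\phi_0\in\Phi_{ds}(\PGL_3)$, since otherwise $\rho$ is reducible and $Z_{G_2(\C)}(\iota'_*(\phi_0))$ contains a torus.

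\emph{Parts (ii) and (iii).} Since ${\rm std}\circ\iota$ is faithful and $\Spin_7(\C)\to\SO_7(\C)$ restricts to it on $\iota(G_2(\C))$, injectivity of ${\rm std}_*\circ\iota_*$ (hence of $\iota_*$) is equivalent to: the $7$-dimensional representation detects $G_2(\C)$-conjugacy of semisimple parameters into $G_2(\C)$. For Zariski-dense image this follows from $N_{\SO_7(\C)}(G_2(\C))=G_2(\C)$; in general I would reduce, using that the Arthur $\SL_2$ is pinned down by the partition of $7$ (the five occurring partitions being pairwise distinct), to the same statement for the reductive centralizer of each nilpotent orbit --- namely $G_2(\C)$, $\SL_2(\C)$ (twice), $S_3$, or $1$ --- which is elementary except in the case $G_2(\C)$ (see below). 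For the discrete-series assertions: for $\phi\in\Phi^{\diamondsuit}_{ds}(G_2)$, the condition $\phi\notin\Phi^{\spadesuit\clubsuit}(G_2)$ removes exactly the case that ${\rm std}\circ\iota\circ\phi$ contains $\bfone$ (fact (4) and part (i)), while a short inspection of the proper reductive subgroups of $G_2(\C)$ shows ${\rm std}\circ\iota\circ\phi$ is then a multiplicity-free sum of self-dual irreducibles with dimension pattern $[7]$ or $[4,3]$; hence $Z_{\SO_7(\C)}({\rm std}\circ\iota\circ\phi)$ is $1$ or $\mu_2$ and equals $({\rm std}\circ\iota)(Z_{G_2(\C)}(\phi))$. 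This at once gives ${\rm std}_*\iota_*\phi\in\Phi_{ds}(\Sp)$, and since $\spin\circ\iota_*\phi=({\rm std}\circ\iota\circ\phi)\oplus\bfone$ is again multiplicity-free self-dual with $\bfone$ of multiplicity one, also $\iota_*\phi\in\Phi_{ds}(\PGSp)$; by fact (4), $\phi'\in\Phi_{ds}(\PGSp)$ has $L(s,\spin\circ\phi')$ with a pole at $s=0$ iff $\phi'=\iota_*\phi$ for some $\phi$ valued in $G_2(\C)$, and then $\phi\notin\Phi^{\spadesuit\clubsuit}(G_2)$, since otherwise $\spin\circ\phi'$ contains $\bfone$ twice and $Z_{\Spin_7(\C)}(\phi')$ contains the positive-dimensional group $Z_{\Spin_7(\C)}(\iota(\iota'(\SL_3(\C))))$ ($\SL_3$ not being of maximal rank in $\Spin_7$), contradicting discreteness. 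Lastly, the map $Z_{\Spin_7(\C)}(\iota_*\phi)\to Z_{\SO_7(\C)}({\rm std}\circ\iota\circ\phi)$ is surjective (its image contains $({\rm std}\circ\iota)(Z_{G_2(\C)}(\phi))$, which is everything) with kernel $Z(\Spin_7(\C))=\mu_2$, and ${\rm std}\circ\iota$ identifies $Z_{G_2(\C)}(\phi)$ with $Z_{\SO_7(\C)}({\rm std}\circ\iota\circ\phi)$; chaining these gives $\iota_*\colon S_{\phi}\cong S_{\iota_*(\phi)}/Z(\Spin_7)$.

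\emph{The main obstacle.} The essential point, which cannot be settled by hand, is the conjugacy-detection used in (ii)--(iii): that a reductive (in particular finite) subgroup of $G_2(\C)$ is determined up to $G_2(\C)$-conjugacy by its conjugacy class in $\SO_7(\C)$ --- equivalently, that the $7$-dimensional representation separates $G_2(\C)$-conjugacy classes of semisimple homomorphisms. This rests on the classification of the relevant subgroups of $G_2(\C)$, i.e.\ the work of Griess and of Chenevier cited in the introduction. Everything else --- the isotypic bookkeeping, the determinant-one rigidity in (i), and the identification $Z_{G_2(\C)}(\SO_3)=S_3$ via the subregular orbit $G_2(a_1)$ --- is routine structure theory of $G_2$.
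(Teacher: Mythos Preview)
Your approach is essentially the same as the paper's. For part (i), both you and the paper prove injectivity by passing to the $7$-dimensional representation $\phi\mapsto\phi\oplus\phi^{\vee}\oplus\bfone$ and recovering $\phi$ up to duality. For the component groups, the paper argues more directly via your fact~(2): since the trivial summand occurs with multiplicity one, the centralizer of $\iota'_*(\phi)$ in $G_2(\C)$ lies in the stabilizer of the fixed line, which is $\SL_3(\C)\rtimes\Z/2\Z$; inside $\SL_3(\C)$ the centralizer of a discrete $\phi$ is $\mu_3$, and the outer $\Z/2\Z$ (acting by duality on $\mu_3$) contributes exactly when $\phi\cong\phi^{\vee}$, giving $\mu_3\rtimes\Z/2\Z=S_3$. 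This is cleaner than your route through the subregular orbit $G_2(a_1)$, and it also closes a small gap in your non-self-dual case: showing that the torus $\{\mathrm{diag}(t,t,t,t^{-1},t^{-1},t^{-1},1)\}$ is not a cocharacter of $G_2$ only gives that the intersection with $G_2$ is finite, not that it is exactly $\mu_3$; the normalizer argument pins this down immediately (alternatively, note the $G_2$-invariant $3$-form has a nonzero component in $\wedge^3 V_3$, which scales by $t^3$).

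For parts (ii) and (iii) the paper simply refers to \cite[Chapter~4, Section~1]{GrS2}, whereas you supply an explicit outline and correctly isolate the essential external input, namely the conjugacy-detection theorem of Griess \cite{Gr95} that $\SO_7(\C)$-conjugacy of subgroups of $G_2(\C)$ implies $G_2(\C)$-conjugacy.
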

     \begin{proof}  (i) If  $\phi_1, \phi_2  \in \Phi(\PGL_3)$ are conjugate in $G_2(\mathbb C)$, then the following semi-simple representations are isomorphic: 
     \[ 
     \phi _1+ \phi_1^{\vee} +1 \cong   \phi _2+ \phi_2^{\vee} +1. 
     \] 
     We need to show that this isomorphism implies $\phi_1\cong \phi_2$ or $\phi_1\cong \phi_2^{\vee}$.  This is obvious if $\phi_1$ is 
     irreducible. If $\phi_1$ contains an irreducible two dimensional summand, then that summand must be contained in $\phi_2$ or $\phi_2^{\vee}$. Assume that 
     it is contained in $\phi_2$. The one-dimensional summand of $\phi_1$ is the determinant inverse of the two-dimensional summand, hence it is also contained in $\phi_2$ and thus $\phi_1\cong\phi_2$.
     We leave the case of three summands as an exercise.   Assume $\phi \in \Phi_{ds}(\PGL_3)$. Then the centralizer of $\phi$ in $\SL_3(\mathbb C)$ is $\mu_3$. 
     The stabilizer in $G_2(\mathbb C)$ of $1\subset 1+  \phi + \phi^{\vee}$ is $\SL_3(\mathbb C) \rtimes \Z/2\Z$. It follows that the centralizer of $\phi$ in $G_2(\mathbb C)$ 
     is either $\mu_3$ or $S_3$, depending on whether  $\phi\not\cong \phi^{\vee}$ or $\phi\cong \phi^{\vee}$ respectively.   
  
  \vskip 5pt 
  For (ii) and (iii), we refer the reader to \cite[Chapter 4, Section 1]{GrS2}.

     \end{proof} 

\vskip 10pt

 \section{\bf Definition of $\mathcal{L}$}  \label{S:definition}
In this section, we will define the LLC  map
\[   \mathcal{L}: \Irr(G_2) \longrightarrow \Phi(G_2) \]
using the results of the previous section and establish some initial properties of $\mathcal{L}$.
\vskip 5pt

 \vskip 5pt

\subsection{\bf First definition of $\mathcal{L}$}
We shall give two slightly different constructions of the map $\mathcal{L}$. 
\vskip 5pt

For the first construction (which was sketched in the introduction), we make use of the decomposition
\[  \Irr(G_2) = \Irr^{\spadesuit}(G_2) \sqcup \Irr^{\heartsuit}(G_2) \]
introduced in Theorem \ref{T:dichotomy}(i).
First consider  $\pi \in \Irr^{\spadesuit}(G_2) \subset \Irr_{ds}(G_2)$, and set $\tau = \theta_D(\pi)$ for a unique  $\tau \in \Irr(\PD^{\times})$. By the Jacquet-Langlands correspondence and the LLC for $\PGL_3$, one has a discrete  L-parameter 
\[  \phi_{\tau} : WD_F \longrightarrow \SL_3(\C). \]
Composing this with the natural inclusion 
\[  \iota' : \SL_3(\C) \longrightarrow G_2(\C), \]
we set
\[  \mathcal{L}(\pi) = \iota' \circ \phi_{\tau}: WD_F \longrightarrow G_2(\C), \]
which is an element of $\Phi_{ds}(G_2)$ by Lemma \ref{L:para}(i).
\vskip 5pt

For $\pi \in \Irr^{\heartsuit}(G_2)$, let $\sigma = \theta(\pi) \in \Irr(\PGSp)$ and consider its restriction 
\[ {\rm rest}(\sigma) \in \Irr(\Sp)/_{\PGSp}. \]
By the LLC for $\Sp$, ${\rm rest}(\sigma)$ gives rise to an L-parameter 
\[  \phi_{{\rm rest}(\sigma)} \in \Phi(\Sp). \]
In view of Lemma \ref{L:para}, it remains to verify:
\vskip 5pt

\begin{prop}  \label{P:well}
For $\pi \in \Irr^{\heartsuit}(G_2)$, with $\sigma = \theta(\pi) \in \Irr(\PGSp)$,
\[   \phi_{{\rm rest}(\sigma)} \in {\rm Im}({\rm std}_* \circ \iota_*), \]
so that it gives rise to a uniquely determined element $\mathcal{L}(\pi) \in \Phi(G_2)$ by Lemma \ref{L:para}(iii).  
\end{prop}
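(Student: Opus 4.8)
The uniqueness of $\mathcal{L}(\pi)$ is free: by Lemma \ref{L:para}(iii) the map $\std_* \circ \iota_*$ is injective, so once $\phi := \phi_{{\rm rest}(\sigma)}$ is known to lie in its image there is a unique $\mathcal{L}(\pi) \in \Phi(G_2)$ above it. Since the composite $G_2(\C) \hookrightarrow \Spin_7(\C) \xrightarrow{\std} \SO_7(\C)$ is the faithful standard $7$-dimensional orthogonal representation of $G_2(\C)$, what must be shown is that the semisimple $7$-dimensional orthogonal representation $\phi$ of $WD_F$ has image conjugate into $G_2(\C) \subseteq \SO_7(\C)$. When $\pi$ is non-supercuspidal this is immediate from the explicit description of $\sigma = \theta(\pi)$ in terms of the inducing Levi data given in \cite{GS}, so we may assume $\pi$ is supercuspidal (hence in $\Irr_{ds}(G_2)$). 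The key group-theoretic input is then Chenevier's theorem \cite{C} (resting on Griess \cite{Gr95}): any subgroup $H \subseteq \SO_7(\C)$ each of whose semisimple elements is conjugate into $G_2(\C)$ is itself conjugate into $G_2(\C)$.

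The plan is to verify the hypothesis of this theorem for a suitable global Galois representation. Using the globalization results of Appendices A and B, we choose a totally real field $k$, a finite place $v_0$ with $k_{v_0} \cong F$, and a cuspidal automorphic representation $\Pi$ of $G_2(\bbA_k)$ with $\Pi_{v_0} \cong \pi$, with $\Pi_{v_1}$ the Steinberg representation at an auxiliary finite place $v_1$, with $\Pi_\infty$ cohomological and regular, and chosen so that its global theta lift to $\PGSp$ is nonzero. By the structure of the exceptional theta correspondence from \cite{GS}, together with the compatibility of global and local theta lifts and Theorem \ref{T:dichotomy}, the lift $\Sigma := \Theta(\Pi)$ is then a nonzero cuspidal automorphic representation of $\PGSp(\bbA_k)$ with $\Sigma_{v_0} \cong \sigma$, still Steinberg at $v_1$ and cohomological at $\infty$. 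We then pick a cuspidal constituent $\Sigma'$ of the restriction of $\Sigma$ to $\Sp(\bbA_k)$ and apply Arthur's endoscopic classification for $\Sp$ \cite{A}. Its global $A$-parameter must be generic, i.e.\ all Arthur-$\SL_2$ factors are trivial, because a global $A$-parameter with a Steinberg localization at $v_1$ has trivial Arthur-$\SL_2$; and its functorial transfer $\Pi_7$ to $\GL_7(\bbA_k)$ is a cuspidal, self-dual, regular algebraic automorphic representation of orthogonal type, because $\Pi_{7,v_1}$ is the square-integrable Steinberg representation of $\GL_7$ and hence cannot occur in a non-trivial isobaric sum. By construction $\Pi_{7,v_0}$ corresponds under the LLC for $\GL_7$ to $\phi$, and at almost all $v$ the Satake parameter of $\Pi_{7,v}$ is the image under $G_2(\C) \hookrightarrow \GL_7(\C)$ of the Satake parameter $s_{\Pi_v} \in G_2(\C)$ of $\Pi_v$.

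Next we attach to $\Pi_7$ its $\ell$-adic Galois representation $\rho = \rho_{\Pi_7} : {\rm Gal}(\overline{k}/k) \to \GL_7(\overline{\Q}_\ell)$; it is orthogonal and satisfies local-global compatibility as Weil--Deligne representations, the square-integrable place $v_1$ ensuring that the monodromy is matched. For almost all $v$ the element $\rho({\rm Frob}_v)_{ss}$ is then conjugate into $G_2(\C)$; since the Frobenii are Zariski dense by Chebotarev, Chenevier's theorem \cite{C, Gr95} forces the image of $\rho$ to be conjugate into $G_2(\overline{\Q}_\ell)$. Restricting $\rho$ to a decomposition group at $v_0$ and transporting through an isomorphism $\overline{\Q}_\ell \cong \C$, we conclude that $\phi = \phi_{{\rm rest}(\sigma)}$ has image conjugate into $G_2(\C)$, i.e.\ $\phi \in {\rm Im}(\std_* \circ \iota_*)$, as desired; by Lemma \ref{L:para}(iii) this produces a uniquely determined $\mathcal{L}(\pi) \in \Phi(G_2)$.

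I expect the main obstacle to be the globalization together with the control of the resulting $A$-parameter. One must choose the auxiliary local conditions --- Steinberg at $v_1$, cohomological and regular at $\infty$, and possibly a further finite-place constraint --- so that simultaneously (a) the prescribed local representations $\pi$ and $\sigma$ genuinely occur as local components of cuspidal automorphic representations matched by the global theta correspondence, which for $G_2$ is precisely the content of the non-vanishing and globalization results of Appendices A and B and is where the bulk of the work lies; (b) the global theta lift $\Theta(\Pi)$ is nonzero and cuspidal; and (c) the Steinberg condition at $v_1$ rules out all non-tempered global $A$-parameters and forces $\Pi_7$ to be cuspidal, so that the construction of $\rho_{\Pi_7}$ and its local-global compatibility apply. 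By comparison, the purely local ingredients --- the explicit theta computations of \cite{GS} in the non-supercuspidal case, the Chenevier--Griess group theory, and Chebotarev --- will enter in a routine way.
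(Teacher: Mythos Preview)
Your proposal is correct and follows essentially the same route as the paper: reduce to the supercuspidal case via the explicit theta computations of \cite{GS}, globalize $\pi$ over a totally real field with a Steinberg place and regular archimedean components using the results of Appendix A, pass to the cuspidal theta lift $\Sigma$ on $\PGSp$, transfer ${\rm rest}(\Sigma)$ to a regular algebraic cuspidal representation of $\GL_7$ via Arthur, and then invoke Chenevier's theorem \cite{C} (built on Griess \cite{Gr95}) together with local--global compatibility to force the local parameter at $v_0$ into $G_2(\C)$. The only cosmetic difference is that the paper works explicitly with the totally definite octonion algebra (so $G_\infty$ is compact and $\Sigma_\infty$ is a holomorphic discrete series), whereas you phrase the archimedean condition as ``cohomological and regular''; both serve the same purpose of making the Galois representation machinery available.
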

We shall verify this in \S \ref{SS:well}  by global means.
\vskip 5pt

\subsection{\bf Second definition of $\mathcal{L}$} Our second construction is closely related to the first and is based on the more refined decomposition
\[  \Irr(G_2) = \Irr^{\spadesuit}(G_2) \sqcup \Irr^{\clubsuit}(G_2) \sqcup \Irr^{\diamondsuit}(G_2) \]
in (\ref{E:decomp}).
\vskip 5pt

For $\pi \in \Irr^{\spadesuit}(G_2)$ or $\Irr^{\diamondsuit}(G_2) \subset \Irr^{\heartsuit}(G_2)$, the definition of $\mathcal{L}(\pi)$ is as in the first construction above.
Thus, the difference only arises for those  $\pi \in \Irr^{\clubsuit}(G_2) \subset \Irr^{\heartsuit}(G_2)$. Such a $\pi$ has nonzero theta lift to a unique  $\tilde{\tau}  \in 
\Irr(\PGL_3 \rtimes \Z/2\Z)$. Restricting $\tilde{\tau}$ to $\PGL_3$ gives a well-defined element 
\[  \{ \tau, \tau^{\vee} \} \in \Irr(\PGL_3) / _{\Z/2\Z}, \] 
and thus an L-parameter
\[  \phi_{\tilde{\tau}}: WD_F \longrightarrow \SL_3(\C) \]
which is well-defined up to the outer automorphism action.  By Lemma \ref{L:para}, we may set
\[  \mathcal{L}(\pi) = \iota' \circ \phi_{\tilde{\tau}}  \in \Phi(G_2).\]
\vskip 5pt

There is of course a need to reconcile the two definitions for $\pi \in \Irr^{\clubsuit}(G_2)$. This follows readily from the explicit theta correspondences computed in \cite[Thm. 8.2, Thm. 8.5, Thm. 15.1 and Thm. 15.2]{GS}.
 \vskip 5pt
 
 \subsection{\bf Proof of Proposition \ref{P:well}}  \label{SS:well}
 To complete the construction of $\mathcal{L}$, it remains to verify Proposition \ref{P:well}.
 For a non-supercuspidal representation $\pi \in \Irr^{\heartsuit}(G_2)$,
  we have determined in \cite[Thm. 15.1, Thm 15.2 and  Thm. 15.3]{GS} the representation $\theta(\pi) \in \Irr(\PGSp)$ explicitly. From this, Proposition \ref{P:well} follows readily. Indeed, the results of \cite{GS} shows that $\mathcal{L}(\pi) \in \Phi(G_2)$ is given precisely by the discussion in \cite[\S 3.5]{GS}.
 
 \vskip 5pt
 It remains to treat supercuspidal $\pi \in \Irr^{\heartsuit}(G_2)$, in which case $\theta(\pi)$ is a discrete series representation.  
  We shall use a global argument. 
 \vskip 5pt
 
 Let $k$ be a totally real number field with a place $w$ such that $k_w \cong F$. Let $\mathbb{O}$ be a totally definite octonion algebra over $F$ with automorphism group $G = \Aut(\mathbb{O})$. Then $G_v$ is anisotropic at all archimedean places $v$ and $G_v$ is the split $G_2$ at all finite places $v$. 
 By Proposition \ref{P:period_global} and Corollary \ref{C:cuspidal}  in Appendix A, we can find a cuspidal automorphic representation  $\Pi = \otimes_v \Pi_v$ of $G$  so that
 \vskip 5pt
 
  \begin{itemize}
  \item  $\Pi_w \cong \pi$;
 \item  $\Pi_{\infty}$ has sufficiently regular infinitesimal character at each archiemdean place $\infty$;
  \item $\Pi_u \cong$ the Steinberg representation ${\rm St}$ at some finite place $u \ne w$. 
  \item $\Pi$ has nonzero  cuspidal global theta lift  $\Sigma := \theta(\Pi)$ to the split group $\PGSp$ over $k$.
  \end{itemize}
  Here, in the last bullet point, we are considering the dual pair $G \times \PGSp$ in the $k$-rank 3 form of $E_7$ (commonly denoted by $E_{7,3}$); this global theta correspondence was studied in \cite{GrS2}. 
 \vskip 5pt
 
 Now the nonzero cuspidal representation $\theta(\Pi) =: \Sigma = \otimes_v \Sigma_v$ of $\PGSp$ satisfies:
   \vskip 5pt
   
   \begin{itemize}
  \item $\Sigma_{\infty}$ is a holomorphic discrete series with sufficiently regular infinitesimal character for every archimedean place $\infty$. 
\item $\Sigma_w$ is the discrete series representation $\theta(\pi)$ of $\PGSp(k_w)$.  
\item $\Sigma_u$ is the Steinberg representation of $\PGSp(k_u)$. 
\end{itemize}
\vskip 5pt

By Arthur \cite{A}, the restriction of $\Sigma$ to $\Sp$ has a cuspidal transfer $\mathcal{A}({\rm rest}(\Sigma))$  to $GL_7$ (because of the Steinberg local component at $w$)  which  is regular algebraic at all real places.  Now at each finite place $v$, it follows by local-global compatibility of the transfer from classical groups to $\GL_N$ that the local L-parameter of $\mathcal{A}({\rm rest}(\Sigma))_v$ is the local L-parameter of ${\rm rest}(\Sigma_v)$. Moreover, 
by a result of Chenevier \cite[Thm. E]{C}, at all finite places $v$, the local L-parameter  of $\mathcal{A}({\rm rest}(\Sigma))_v$ factors  through $G_2(\C)$, uniquely up to $G_2$-conjugacy. Hence, 
 the local L-parameter of  ${\rm rest}(\Sigma_w) = {\rm rest}(\theta(\pi))$ takes value in $G_2(\C) \subset \SO_7(\C)$, as desired.


\vskip 5pt

We have thus completed the proof of Proposition \ref{P:well}. 

\vskip 10pt

\subsection{\bf Initial properties}
The definition of $\mathcal{L}$ given above and the results of \cite{GS} allows us to read off some initial properties of  $\mathcal{L}$ readily. We highlight some of these here:
\vskip 5pt

 \begin{itemize}
 \item (Nontempered representations) As mentioned above, the results of \cite[Thm. 15.1]{GS} shows that our definition of $\mathcal{L}$ gives the Langlands parametrization of nontempered representations discussed in \cite[\S 3.5]{GS} (based on the consideration of usual desiderata). In particular, $\mathcal{L}$ gives  a bijection
 \[  \mathcal{L}: \Irr_{nt}(G_2) \longleftrightarrow \Phi_{nt}(G_2) \]
 where the subscript ``nt" stands for ``nontempered". In particular, nontempered L-packets of $G_2$ are singletons. 
 
 \vskip 5pt
 
 \item (Tempered representations) The results of \cite[Thm 15.2, 15.3]{GS} imply that one has a commutative diagram
 \[ \begin{CD}
   \Irr_t(G_2)@>\mathcal{L}>>\Phi_t(G_2)  \\
@AAA   @AAA \\
\Irr_{ds}(G_2) @>\mathcal{L}>> \Phi_{ds}(G_2). \end{CD} \]
\vskip 5pt

\item (Tempered but non-discrete series) 
One has a surjection
\[  \mathcal{L}: \Irr_t(G_2) \smallsetminus \Irr_{ds}(G_2) \twoheadrightarrow \Phi_t(G_2) \smallsetminus \Phi_{ds}(G_2)  \]
such that the fiber  over $\phi$ has size $1$ or $2$, according to whether $|S_{\phi}| =1$ or $2$. The fiber $\mathcal{L}^{-1}(\phi)$ has  a unique  generic representation, and we attach it to the trivial character of $S_{\phi}$.   
  
 \vskip 5pt

 \item (Summary) In summary, we have established items (i), (ii) and (iii) of the Main Theorem. Moreover, the commutativity of the two diagrams   in (iv)  of the Main Theorem follows by the construction of $\mathcal{L}$, as does the characterization of $\mathcal{L}$ given in (v) of the Main Theorem. Furthermore, we have also demonstrated (vii) of the Main Theorem for non-discrete-series parameters $\phi$ (for all $p$).
 \vskip 5pt
 
 \item (Compatibility with global Langlands) We can also show (viii) of the Main Theorem.  In the context of (viii) in the Main Theorem, the global theta lift $\Sigma$ of $\Pi$ to $\PGSp$ is globally generic, regular algebraic and  cuspidal with a Steinberg local component. Moreover, the transfer of ${\rm rest}(\Sigma)$ to $\GL_7$ is a regular algebraic cuspidal automorphic representation  $\mathcal{A}({\rm rest}(\Sigma))$  (with cuspidality a consequence of the Steinberg local component).
 Hence, the Galois representation    $\rho_{\Pi}$ is associated with $\mathcal{A}({\rm rest}(\Sigma))$.  By \cite[Thm. 6.4]{C}, $\rho_{\Pi}$ takes value in $G_2(\overline{\Q}_l)$ (after conjugation). By local-global compatibility \cite{TY, Ca}, the local Galois representation $\rho_{\Pi, v}$ at each finite place $v$ corresponds to the local L-parameter of ${\rm rest}(\Sigma_v)$. But by the construction of  $\mathcal{L}$, the local L-parameter of ${\rm rest}(\Sigma_v)$ is ${\rm std} \circ \iota \circ  \mathcal{L}(\Pi_v)$. This establishes (viii). 
  \end{itemize}
 
 \vskip 10pt
 
 \subsection{\bf The packets for $\Phi_{ds}^{\spadesuit \clubsuit}(G_2)$}
 By the above discussion,  it remains to understand the map
 \[  \mathcal{L}: \Irr_{ds}(G_2) \longrightarrow \Phi_{ds}(G_2) \]
 on discrete series representations.  In fact, by construction, $\mathcal{L}$ restricts to give:
 \[  \mathcal{L}^{\spadesuit \clubsuit}: \Irr_{ds}^{\spadesuit}(G_2) \sqcup \Irr_{ds}^{\clubsuit}(G_2) \longrightarrow \Phi_{ds}^{\spadesuit \clubsuit}(G_2). \]
 and
 \[  \mathcal{L}^{\diamondsuit}: \Irr_{ds}^{\diamondsuit}(G_2) \longrightarrow \Phi_{ds}^{\diamondsuit}(G_2). \]
 The results of \cite[\S 7 and \S 8]{GS} allows us to understand the fibers of the map $\mathcal{L}^{\spadesuit \clubsuit}$ quite precisely.
 \vskip 5pt
 
 \begin{prop}
  The map $\mathcal{L}^{\spadesuit\clubsuit}$ is surjective.  Moreover, for $\phi \in \Phi^{\spadesuit \clubsuit}_{ds}(G_2)$, 
  the fiber $\mathcal{L}^{-1}(\phi)$ contains a unique generic element (belonging to $\Irr_{ds}^{\clubsuit}(G_2)$). Further,
  for $p \ne 3$, one has a natural bijection
  \[  \mathcal{L}^{-1}(\phi) \longleftrightarrow \Irr (S_{\phi})  \]
 such that the unique generic element corresponds to the trivial character of $S_{\phi}$. 
 For $p= 3$, we only know that  there is an injection $ \mathcal{L}^{-1}(\phi) \hookrightarrow \Irr (S_{\phi})$ in general, though this injection is bijective when $\phi|_{\SL_2}$ is the subregular $\SL_2$ in $G_2(\C)$.
 \end{prop}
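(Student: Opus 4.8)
The plan is to compute the fibre $\mathcal{L}^{-1}(\phi)$ explicitly for each $\phi \in \Phi^{\spadesuit\clubsuit}_{ds}(G_2)$, using the two recipes defining $\mathcal{L}$ on $\Irr^\spadesuit(G_2)$ and $\Irr^\clubsuit(G_2)$ together with the explicit theta correspondences of \cite{GS}. Write $\phi = \iota'_*(\{\tau,\tau^\vee\})$ with $\tau \in \Phi_{ds}(\PGL_3)$ as in Lemma \ref{L:para}(i). Either $\tau$ is an irreducible $3$-dimensional parameter (``supercuspidal'', trivial on the Deligne $\SL_2$) or $\tau$ is the Steinberg parameter of $\PGL_3$ twisted by a cubic character; in the latter case $\phi|_{\SL_2}$ equals the principal $\SL_2$ of $\iota'(\SL_3(\C))$, which is exactly the subregular $\SL_2$ of $G_2(\C)$ (both act on the $7$-dimensional representation as $3 \oplus 3 \oplus 1$), while in the former case $\phi|_{\SL_2}$ is trivial. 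By Lemma \ref{L:para}(i), $S_\phi = \mu_3 = Z(\iota'(\SL_3(\C)))$ when $\tau \not\cong \tau^\vee$ and $S_\phi = S_3$ when $\tau \cong \tau^\vee$, so in both cases $|\Irr(S_\phi)| = 3$.

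First I would enumerate the fibre. Since $\mathcal{L}$ sends $\Irr^\diamondsuit_{ds}(G_2)$ into $\Phi^\diamondsuit_{ds}(G_2)$, which is disjoint from $\Phi^{\spadesuit\clubsuit}_{ds}(G_2)$ by definition, $\mathcal{L}^{-1}(\phi) \subseteq \Irr^\spadesuit_{ds}(G_2) \sqcup \Irr^\clubsuit_{ds}(G_2)$. For $\pi \in \Irr^\clubsuit(G_2)$ one has $\mathcal{L}(\pi) = \phi$ iff $\theta_{M_3}(\pi)$ restricts to the orbit $\{\tau,\tau^\vee\}$ on $\PGL_3$; as $\tau \in \Irr_{ds}(\PGL_3)$ and $\theta_{M_3}$ is an injection preserving discrete series with explicitly known image (Theorem \ref{T:dichotomy}(iii)(c) and \cite{GS}), the representation(s) of $\PGL_3 \rtimes \Z/2\Z$ lying above $\{\tau,\tau^\vee\}$ — the single induced representation $\Ind(\tau)$ if $\tau \not\cong \tau^\vee$, the two extensions of $\tau$ if $\tau \cong \tau^\vee$ — all occur, so the $\clubsuit$-part of $\mathcal{L}^{-1}(\phi)$ has size $1$ resp.\ $2$, and consists of discrete series. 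For $\pi \in \Irr^\spadesuit(G_2)$, using that the outer automorphism of $\SL_3$ is induced by conjugation in $G_2(\C)$ (so $\iota'\circ\phi_\tau$ and $\iota'\circ\phi_{\tau^\vee}$ are $G_2(\C)$-conjugate, cf.\ the proof of Lemma \ref{L:para}(i)), one has $\mathcal{L}(\pi) = \phi$ iff the Jacquet--Langlands transfer of $\theta_D(\pi)$ to $\PGL_3$ lies in $\{\tau,\tau^\vee\}$; writing $\tau_D$ for the representation of $\PD^\times$ Jacquet--Langlands-dual to $\tau$, the relevant representations of $\PD^\times$ are $\tau_D$ and its contragredient (distinct iff $\tau\not\cong\tau^\vee$). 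As $\theta_D$ is injective, is bijective for $p \ne 3$, and always (by \cite{GS}) has image containing every cubic character of $\PD^\times$, the $\spadesuit$-part has size $2$ resp.\ $1$ whenever $p\ne 3$, or $p=3$ and $\tau$ is of Steinberg type (so $\tau_D$ is a cubic character), and may be smaller (a singleton or empty) when $p=3$ and $\tau$ is supercuspidal. In particular $\mathcal{L}^{-1}(\phi)$ always contains its non-empty $\clubsuit$-part, so $\mathcal{L}^{\spadesuit\clubsuit}$ is surjective, and $|\mathcal{L}^{-1}(\phi)| = 3 = |\Irr(S_\phi)|$ whenever $p \ne 3$, or $p=3$ and $\phi|_{\SL_2}$ is subregular.

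Second I would set up the parametrisation by $\Irr(S_\phi)$. Put $A = Z(\iota'(\SL_3(\C))) \cong \mu_3 \lhd S_\phi$. Characters of $S_\phi$ trivial on $A$ form $\Irr(S_\phi/A)$, and I match them with the $\clubsuit$-members via the Clifford theory of $\PGL_3 \rtimes \Z/2\Z$ (identifying $S_\phi/A$ with the group interchanging $\tau$ and $\tau^\vee$), declaring $\bfone$ to correspond to the member whose $\theta_{M_3}$-lift is $\Ind(\tau)$, resp.\ a distinguished extension of $\tau$. The remaining irreducible characters of $S_\phi$ — the two non-trivial characters of $A$ when $S_\phi = \mu_3$, or the unique $2$-dimensional representation $\Ind_A^{S_\phi}$ of a non-trivial character of $A$ when $S_\phi = S_3$ — are matched with the $\spadesuit$-members, using that $A = Z(\SL_3(\C)) = \pi_1(\PGL_3)$ classifies the inner forms of $\PGL_3$ with $\PD^\times$ attached to a fixed generator: such a character is sent to the $\theta_D$-preimage of the Jacquet--Langlands-dual of $\tau$ or of $\tau^\vee$ on $\PD^\times$ according to its restriction to $A$. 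The residual $\Z/2\Z$-ambiguity (the choice of generator of $\pi_1(\PGL_3)$, equivalently the labelling of $\{\tau,\tau^\vee\}$) is fixed compatibly with the Whittaker-normalised local Langlands and Jacquet--Langlands correspondences; this is legitimate since split $G_2$, whose root and weight lattices coincide, has a unique Whittaker datum up to conjugacy. The resulting assignment is a bijection when the fibre has $3$ elements, and an injection in general.

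Third I would identify the generic member. By \cite{SW}, every irreducible generic supercuspidal representation of $G_2$ has nonzero theta lift to a generic supercuspidal representation of $\PGL_3$ or of $\PGSp$, hence lies in $\Irr^\clubsuit(G_2) \cup \Irr^\diamondsuit(G_2)$ and never in $\Irr^\spadesuit(G_2)$; together with the explicit description in \cite{GS} of $\theta_D$ on the (non-supercuspidal) subregular-unipotent discrete series, none of which is generic, this shows that no $\spadesuit$-member of $\mathcal{L}^{-1}(\phi)$ is generic. On the other hand $\tau$ — and, when $\tau\cong\tau^\vee$, its distinguished extension across $\PGL_3\rtimes\Z/2\Z$ — is generic, and by \cite{SW} its theta lift to $G_2$ is generic; this lift is precisely the $\clubsuit$-member attached above to $\bfone\in\Irr(S_\phi)$, and by uniqueness of the Whittaker datum of $G_2$ it is the only generic representation in the fibre. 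Collecting this, one obtains the proposition: surjectivity of $\mathcal{L}^{\spadesuit\clubsuit}$, a unique generic element in $\Irr^\clubsuit_{ds}(G_2)$ corresponding to $\bfone$, the bijection $\mathcal{L}^{-1}(\phi)\leftrightarrow\Irr(S_\phi)$ for $p\ne 3$, and, for $p=3$, the injection $\mathcal{L}^{-1}(\phi)\hookrightarrow\Irr(S_\phi)$ which is bijective exactly when $\phi|_{\SL_2}$ is the subregular $\SL_2$. The main obstacle, already resolved by the inputs invoked, is the need for fine information from \cite{GS} — that both extensions of a self-dual discrete series of $\PGL_3$ lie in the image of $\theta_{M_3}$, and that $\mathrm{Im}(\theta_D)$ always contains the cubic characters of $\PD^\times$ while the $p=3$ failure of surjectivity of $\theta_D$ concerns only supercuspidal-type representations — and the genericity results of \cite{SW}; the secondary subtlety is the normalisation making the bijection with $\Irr(S_\phi)$ canonical, which rests on the uniqueness of the Whittaker datum for $G_2$.
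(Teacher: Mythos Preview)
Your proposal is essentially correct and uses the same underlying idea as the paper: both parametrize $\mathcal{L}^{-1}(\phi)$ via the theta correspondences attached to the three inner forms of $\PGL_3$ (the split $M_3$ giving the $\clubsuit$-members, the two division algebras $D_1$, $D_2$ giving the $\spadesuit$-members), with the counting and the $p=3$ caveat coming from \cite{GS}. The presentations differ, however, and the paper's is cleaner on the point you treat most loosely.

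The paper sets up the bijection uniformly via the Kottwitz isomorphism: for $\eta\in\Irr(S_{[\phi]})$ one composes $i_{[\phi]}^*:\Irr(S_{[\phi]})\to\Irr(Z(\SL_3(\C)))$ with the Kottwitz map $\Irr(Z(\SL_3(\C)))\to H^1(F,\PGL_3)\xrightarrow{\mathrm{inv}}\Z/3\Z$, obtaining a degree-$3$ central simple algebra $D_{j(\eta)}$, and then sets $\pi_\eta=\theta(JL_{D_{j(\eta)}}(\tau_\rho))$. The paper then explicitly verifies that changing the choice of $i_{[\phi]}$ (the only ambiguity, an outer automorphism of $\SL_3(\C)$) simultaneously replaces $\rho$ by $\rho^\vee$ and $j$ by $-j$, and that these two effects cancel because $i^*(JL_{D_j}(\tau_\rho))=JL_{D_{-j}}(\tau_\rho^\vee)$. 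This is what makes the bijection \emph{natural}. By contrast, you resolve the ``residual $\Z/2\Z$-ambiguity'' by declaring it ``fixed compatibly with the Whittaker-normalised local Langlands and Jacquet--Langlands correspondences'' and invoking uniqueness of the Whittaker datum for $G_2$; this is not a substitute for the independence check, and the appeal to Whittaker data does not by itself pin down a bijection $\Irr(S_\phi)\to\mathcal{L}^{-1}(\phi)$.

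A related slip occurs in your genericity paragraph: ``by uniqueness of the Whittaker datum of $G_2$ it is the only generic representation in the fibre'' is not a valid inference. Uniqueness of the Whittaker datum means genericity is an unambiguous notion; it does not imply that at most one member of a given fibre is generic. For that you must appeal (as the paper does) to the explicit results of \cite[\S 7--8]{GS}, which in particular determine which $G_2$-representations arising from $\theta_{M_3}$ and $\theta_D$ are generic. In the self-dual case $\tau\cong\tau^\vee$ this is exactly what tells you which of the two extensions of $\tau$ to $\PGL_3\rtimes\Z/2\Z$ yields the generic $\clubsuit$-member, and hence which one deserves the label ``distinguished''.
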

 \vskip 5pt
 
 \begin{proof}
 As almost all statements of the proposition follow from \cite[\S 7 and \S 8]{GS}, we will focus on the main new information here: the natural bijection
  \[  \mathcal{L}^{-1}(\phi) \longleftrightarrow \Irr (S_{\phi})  \]
 when $p \ne 3$.  To obtain this, we need to set things up rather carefully. 
 \vskip 5pt
 
 Let $M_3$ be the $F$-algebra of $3 \times 3$-matrices, with associated Jordan algebra $M_3^+$. The automorphism group $\Aut(M_3^+)$  of $M_3^+$ is a disconnected group whose  identity component is $H = \PGL_3$, with Langlands dual group $H^{\vee}= \SL_3(\C)$. Note that $H^1(F, H) = H^1(F, \PGL_3)$ classifies the isomorphism classes of central simple $F$-algebras of degree $3$, with the distinguished point corresponding to $M_3$. Moreover,
  the invariant map gives a bijection:
 \[ {\rm inv}:  H^1(F, H) = H^1(F, \PGL_3) \longrightarrow \Z/3\Z.  \] 
  For each nontrivial $j \in \Z/3\Z$, the associated central simple algebra $D_j$ is a cubic division algebra. Moreover, $D_1$ and $D_2$ are opposite of each other, so that the inverse map $x \mapsto x^{-1}$ gives a canonical  isomorphism of multiplicative groups $i: D_1^{\times} \cong D_2^{\times}$.
  \vskip 5pt
  
  Suppose that $\tau$ is a discrete series representation of $\PGL_3$. Then for $j \in \Z/3\Z$, one has a Jacquet-Langlands transfer $JL_{D_j}(\tau)$ of $\tau$ to $D_j^{\times}$. 
  Via the isomorphism $i: D_1^{\times} \cong D_2^{\times}$, we may compare $JL_{D_1}(\tau)$ and $JL_{D_2}(\tau)$. One has
  \[  JL_{D_1}(\tau)^{\vee} \cong i^*(JL_{D_2}(\tau)). \]
  
  \vskip 5pt

  Consider now  $[\phi] \in \Phi_{ds}^{\spadesuit \clubsuit}(G_2)$, which is a $G_2$-conjugacy class of homomorphisms $\phi: WD_F \longrightarrow G_2(\C)$.
 One has a projective system (relative to $G_2$-conjugacy) of centralizer subgroups $Z_{G_2}(\phi)$ which are finite of order $3$, and thus identified with the component groups $S_{\phi}$. We denote this projective system of local component groups by $S_{[\phi]}$ and  
 set $C_{[\phi]} = Z_{G_2}(S_{[\phi]})$: this is a projective system of subgroups of $G_2(\C)$ which are simply-connected of type $A_2$. We fix a projective system (relative to $G_2$-conjugacy) of isomorphisms
 \[  i_{[\phi]} : H^{\vee} = \SL_3(\C)  \longrightarrow C_{[\phi]}   \]
 Note that there are basically two inner automorphism classes of such projective systems of isomorphisms, exchanged by an outer automorphism of $\SL_3(\C)$.
 \vskip 5pt

 By virtue of $i_{[\phi]}$,  one has:
 \vskip 5pt
 
 \begin{itemize}
 \item[(a)]  $[\phi]$ gives rise to a well-defined $\SL_3(\C)$-conjugacy classes of maps 
 \[  [\rho]: WD_F \longrightarrow \SL_3(\C), \]
 i.e. a discrete series L-parameter of $H = \PGL_3$, so that
 \[  [\phi] = i_{[\phi]} \circ [\rho].  \]  
 Let $\tau_{\rho}$ be the  discrete series representation of $\PGL_3$ with L-parameter $\rho$.
 \vskip 5pt
 
 \item[(b)]   One also has a projective system of  isomorphism
 \[  
i_{[\phi]}  : Z(H^{\vee})  = Z(\SL_3(\C)) \longrightarrow  S_{[\phi]} = Z(C_{[\phi]})     \]
inducing a projection system of bijections:
\[  \begin{CD}
j: \Irr(S_{[\phi]}) @>{i_{[\phi]}^*}>>  \Irr(Z(H^{\vee})) @>{\rm Kottwitz}>> H^1(F, \PGL_3) @>{\rm inv}>> \Z/3\Z. \end{CD} \]
Hence, an element $\eta \in \Irr(S_{[\phi]})$ gives rise to   a central simple algebra 
$D_{j(\eta)}$ of degree $3$, determined by its invariant $j(\eta) \in \Z/3\Z$.
 \vskip 5pt
 
 \item[(c)]  By combining (a) and (b), one sees that $\eta \in \Irr(S_{[\phi]})$ gives rise to a representation
 \[  \tau_{\phi, \eta} = JL_{D_{j(\eta)}}(\tau_{\rho}) \in \Irr(PD_{j(\eta)}^{\times}). \]
 \end{itemize}
 \vskip 5pt
 
 Now the central simple algebra $D_{j(\eta)}$, or rather its associated Jordan algebra $D_{j(\eta)}^+$ gives rise to a local theta correspondence for the dual pair
 \[  \Aut(D_{j(\eta)}^+)  \times G_2 \subset E_6^{D_{j(\eta)}^+}. \]
 This is the local theta correspondence used in the definition of $\theta_{D}$ and $\theta_{M_3}$ in the Main Theorem. Observe that $D_{j(\eta)}^+ \cong D_{-j(\eta)}^+$ and we have  canonical (up to inner automorphisms) isomorphisms 
 \[ 
 PD_{j(\eta)}^{\times} =  \Aut(D_{j(\eta)}^+)^0 \cong \Aut(D_{-j(\eta)}^+)^0 = PD_{-j(\eta)}^{\times}, 
 \] 
 where  the composite $i : PD_{j(\eta)}^{\times} \rightarrow PD_{-j(\eta)}^{\times}$ is given by $i(x)=x^{-1}$ (up to inner automorphisms).  
  Hence one can define a representation 
 \[ \pi_{\eta} := \theta( \tau_{\phi, \eta}) = \theta( JL_{D_{j(\eta)}(\tau_{\rho}}) ) \cong \theta( JL_{D_{-j(\eta)}}(\tau_{\rho}^{\vee}) ),  \]  
 where for the last isomorphism we used the fact  that $i^* ( JL_{D_{j(\eta)}}(\tau_{\rho}))= JL_{D_{-j(\eta)}}(\tau_{\rho}^{\vee})$ as observed earlier.  
 When $p \ne 3$, we have seen in \cite{GS} that $\pi_{\eta}$ is nonzero irreducible. When $p =3$, we only know this nonvanishing when $\eta$ is trivial  or if $\phi$ gives the subregular $\SL_2$ when restricted to the Deligne $\SL_2$. 
 \vskip 5pt
 
 It is clear from the construction of $\mathcal{L}$ that  one has
 \[  \mathcal{L}^{-1}([\phi]) = \{  \pi_{\eta}: \eta \in  \Irr (S_{[\phi]})  \} \quad \text{for each $[\phi] \in \Phi^{\spadesuit\clubsuit}_{ds}(G_2)$.} \]
 This gives the natural parametrization of the fibers of $\mathcal{L}^{\spadesuit\clubsuit}$ in the proposition. 
 \vskip 5pt
 
 Note that this parametrization does not depend on the choice of the projective system of isomorphism $i_{[\phi]}$. Indeed, there are 2 such choices as noted above, but changing the choice 
replaces $\rho$ by $\rho^{\vee}$  in (a) above  (and hence $\tau_{\rho}$ by $\tau_{\rho}^{\vee}$)  and $j$ by $-j$ in (b). Hence, under this new regime, $\eta \in \Irr(S_{[\phi]})$ is associated by the same process to 
\[  \theta(JL_{D_{-j(\eta)}}(\tau_{\rho}^{\vee}))  \cong  \theta( JL_{D_{j(\eta)}}(\tau_{\rho}) ) = \pi_{\eta}, \]
 as asserted. 
  \end{proof}

 \vskip 10pt
 
 After the above proposition, we see that to prove the Main Theorem, it remains to analyze the map
 \[  \mathcal{L}^{\diamondsuit}: \Irr_{ds}^{\diamondsuit}(G_2) \longrightarrow \Phi_{ds}^{\diamondsuit}(G_2). \]
  For example, a first key question to address is whether this map is surjective. Before addressing such questions, let us describe in the following two sections some ingredients we shall use.
 \vskip 10pt

 \section{\bf Triality and Spin Lifting}  \label{S:triality}

In this section,  we explain how the principle of triality, together with the theory of theta correspondence, can be used to construct a candidate Spin functorial lifting
\[  \spin_* : \Irr_{gen}(\PGSp)  \longrightarrow \Irr_{gen}({\rm PGSO}_8) \longrightarrow \Irr(\GL_8). \]
This also explains the  diagram in (vi) of the Main Theorem.

\vskip 5pt

\subsection{\bf Triality and Spin representations.}  \label{SS:triality}
We begin with a brief discussion of the phenomenon of triality. 
Recall that there are 3 different maps over $F$:
\[  f_1, f_2, f_3: \SO_8 \longrightarrow \PGSO_8 \]
where the groups $\SO_8$ and $\PGSO_8$ are split.
  These three morphisms are non-conjugate under $\PGSO_8$ but are cyclically permuted by an order 3 outer automorphism of $\PGSO_8$ (the triality automorphism).  
Moreover, they induce corresponding morphisms 
\[  f_1^{\vee}, f_2^{\vee}, f_3^{\vee} : \Spin_8(\C) \longrightarrow \SO_8(\C) \]
on the dual side.
Without loss of generality, let us fix
\[  f_1 : \SO_8  \hookrightarrow \GSO_8  \longrightarrow \PGSO_8 \]
and call it the ``standard $\SO_8$''. Then $f_1^{\vee} = \std:  \Spin_8(\C) \longrightarrow \SO_8(\C)$ will be called the standard representation of $\Spin_8(\C)$. On the other hand, $f_2^{\vee}$ and $f_3^{\vee}$ will be called the half-spin representations of $\Spin_8(\C)$.
\vskip 5pt

Consider now the standard embedding 
\[    \begin{CD}
\Spin_7(\C)  @>i>>  \Spin_8(\C)  \\
@VVV  @VVf_1^{\vee}V  \\
  \SO_7(\C) @>>>\SO_8(\C). \end{CD} \]
Then as representations of $\Spin_7(\C)$, one has:
\[  f_1^{\vee} \circ i   = \std \oplus 1 \quad \text{and} \quad f_2^{\vee} \circ i = f_3^{\vee} \circ i = \spin. \]
 Thus, one has a very useful and convenient description of the Spin representation of $\Spin_7(\C)$.

\vskip 10pt

\subsection{\bf Implication.} We observe some implications in representation theory.
\vskip 5pt

For an irreducible generic representation $\sigma$ of $\PGSp$, and $\sigma^{\flat} \in \Irr(\Sp)$ contained in the restriction ${\rm rest}(\sigma)$, 
one may consider the usual (isometry) theta lift of $\sigma^{\flat}$ from $\Sp$ to $\SO_8$, as well as the similitude theta lift of $\sigma$ to $\PGSO_8$. Both these theta lifts are nonzero and  one has the following compatibility: 
\[  \theta(\sigma^{\flat})   \subset \theta(\sigma)|_{\SO_8}  = f_1^*(\theta(\sigma)). \] 
If the L-parameter of $\sigma^{\flat}$ is $\phi^{\flat}$, then the L-parameter of any irreducible constituent of  $f_1^*(\theta(\pi))$ is $\phi^{\flat} \oplus 1$.
\vskip 5pt

On the other hand, if one considers $f_2^*(\theta(\sigma))$ instead, one potentially gets a very different representation of $\SO_8$ with a very different L-parameter. Indeed, consider the special case when $\pi$ is unramified with Satake parameter $s \in \Spin_7(\C)$. Then one can show that $\theta(\sigma)$ is unramified with Satake parameter $i(s) \in \Spin_8(\C)$. Thus, the Satake parameter for $f_1^*(\theta(\sigma)$ is $f_1^{\vee}(i(s))  = 1 \oplus s  \in \SO_8(\C)$ (as we noted above), whereas that of $f_2^*(\theta(\pi))$ is
\[  f_2^{\vee}(i(s))  =  \spin (s). \]
This suggests that  the map on generic representations of $\PGSp$ given by
\[  \sigma\mapsto \theta(\sigma) \mapsto f_2^*(\theta(\sigma))  \in \Irr(SO_8)/ _{\PGSO_8}  \]
is nothing but the Langlands functorial lifting corresponding to the spin representation 
\[  \spin: \Spin_7(\C) \longrightarrow \SO_8(\C) \longrightarrow \GL_8(\C), \]
after one composes the above with the functorial lifting 
\[ \mathcal{A}:  \Irr(SO_8) \longrightarrow \Irr(\GL_8)\]
 provided by Cogdell-Kim-Piatetski-Shapiro-Shahidi \cite{CKPSS}) or Arthur \cite{A}.

\vskip 10pt

\subsection{\bf Spin lifting.}   \label{SS:spin-lift}
 Motivated by the above, we can now define the Spin lifting 
 \[  \spin_*:  \Irr_{gen}(\PGSp)  \longrightarrow \Irr(\GL_8)\]
   by
   \[  \spin_*(\sigma) = \mathcal{A} \left(  f_2^*(\theta(\sigma))  \right) \in \Irr(\GL_8). \]
\vskip 5pt

Observe that the above construction can be carried out globally as well. Namely, if $\Sigma$ is a globally generic cuspidal automorphic representation of $\PGSp$ over a number field $k$, then the global theta lift $\theta(\Sigma)$  on $\PGSO_8$ is globally generic and thus nonzero. Suppose further that $\theta(\Sigma)$ is cuspidal. Then $f_2^*(\theta(\Sigma))$ gives rise to a submodule of globally generic cusp forms on $\SO_8$, all of whose summands are weak spin lifting of $\Sigma$. Its transfer
$\mathcal{A} \left(  f_2^*(\theta(\Sigma))  \right)$
 to $\GL_8$  (\`a la Cogdell-Kim-Piatetski-Shapiro-Shahidi \cite{CKPSS}) is then an isobaric automorphic representation of $\GL_8$ (of orthogonal type): this is  the global analog of the local construction above. Moreover, for each place $v$, one has:
 \[  \spin_*(\Sigma)_v \cong \spin_*(\Sigma_v) \in \Irr(\GL_8(k_v)). \]
 
 \vskip 10pt
 
 \subsection{\bf A key property}
 We shall now show a key property of our local spin lifting $\spin_*$.

 \vskip 5pt
 
\begin{prop}  \label{P:key}
Let $\sigma$ be an irreducible generic  discrete series representation of $\PGSp(F)$ and set   $\tau = \spin_*(\sigma) \in \Irr(\GL_8)$.   
Then  the following holds:
\begin{itemize} 
\item[(i)]   For the $\gamma$-factors provided by the Langlands-Shahidi theory, 
\[ 
\gamma(s, \sigma, \mathrm{spin}, \psi) = \gamma(s, \spin_*(\sigma), \mathrm{std},\psi),
\] 
for any non-trivial character $\psi$ of $F$. 
\vskip 5pt

\item[(ii)]  Let $\chi$ be a quadratic character of $F^{\times}$.  Then 
\[  \spin_*(\sigma \otimes \chi) = \spin_*(\sigma) \otimes \chi. \] 
 \end{itemize}  
\end{prop}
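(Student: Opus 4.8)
The plan is to deduce both parts from the global-to-local compatibility of the spin lifting established in \S\ref{SS:spin-lift}, together with known multiplicativity/stability properties of $\gamma$-factors on the $\GL_8$ side and of Langlands--Shahidi $\gamma$-factors for the spin representation of $\PGSp$ on the other side. For (i), the first step is to globalize: given the generic discrete series $\sigma$ of $\PGSp(F)$, choose a totally real field $k$ with a place $w$ with $k_w \cong F$ and produce a globally generic cuspidal automorphic representation $\Sigma$ of $\PGSp$ over $k$ with $\Sigma_w \cong \sigma$, with a Steinberg component at an auxiliary finite place $u$ (to force cuspidality downstream and control the theta lift), and with cohomological/holomorphic discrete series components at the archimedean places; such globalizations are exactly the kind furnished by the appendices (Prop.~\ref{P:period_global}, Cor.~\ref{C:cuspidal}) used in \S\ref{SS:well}. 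One must also arrange that the global theta lift $\theta(\Sigma)$ to $\PGSO_8$ is cuspidal, so that $f_2^*(\theta(\Sigma))$ and its transfer $\mathcal{A}(f_2^*(\theta(\Sigma))) = \spin_*(\Sigma)$ to $\GL_8$ are defined and $\spin_*(\Sigma)_v \cong \spin_*(\Sigma_v)$ at every place. Then both the completed $L$- and $\epsilon$-factors $L(s,\Sigma,\mathrm{spin})$ and $L(s,\spin_*(\Sigma),\mathrm{std})$ are defined (the former by Langlands--Shahidi theory for the appropriate parabolic, the latter as the standard $\GL_8$ $L$-function), and by construction of $\spin_*$ via the Satake parameters (which match $\spin(s_v)$ at unramified places) these two global completed $L$-functions agree place-by-place at all unramified $v$.

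The second step is to upgrade this agreement of almost-all local factors to an agreement of the complete Euler products, hence of the global functional equations, and then to strip off the known local factors one place at a time. Concretely: the global functional equation for $L(s,\Sigma,\mathrm{spin})$ (Shahidi) and for $L(s,\spin_*(\Sigma),\mathrm{std})$ (Godement--Jacquet on $\GL_8$) have the same completed $L$-functions up to the product of local $\gamma$-factors; at all but finitely many places the local $\gamma$-factors coincide; at the auxiliary Steinberg place $u$ and the archimedean places one can compute both sides explicitly (Steinberg and discrete series local $L$- and $\epsilon$-factors are known on both sides, and $\spin_*$ of a Steinberg/discrete series is explicitly a Steinberg/induced representation of $\GL_8$), and at any remaining auxiliary places one uses the stability of $\gamma$-factors under twisting by highly ramified characters to make the factors agree. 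This forces $\gamma(s,\sigma,\mathrm{spin},\psi) = \gamma(s,\spin_*(\sigma),\mathrm{std},\psi)$ at the place $w$, which is the desired identity for our given $\sigma$ (after noting the identity is independent of the choice of additive character $\psi$ up to the usual harmless factor, or by varying $\psi$ globally). The standard device here is the ``purity/rigidity'' argument: two meromorphic functions of $s$ with the same value away from finitely many places and the same functional equation must have proportional local factors, and by tracking both $L$ and $\epsilon$ the proportionality constant is pinned down.

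For (ii), the cleanest route is again through the $\gamma$-factor identity of part (i) combined with the compatibility of Langlands--Shahidi $\gamma$-factors with twisting. First, $\spin(\,\cdot\,)\otimes\chi$ on the $\PGSp$ side corresponds, under the map of dual groups $\Spin_7(\C)\to\SO_8(\C)\to\GL_8(\C)$ and the sign character $\chi$ of $F^\times \cong W_F^{\mathrm{ab}}$, to twisting the $8$-dimensional spin representation by $\chi$; hence the Langlands--Shahidi theory gives $\gamma(s,(\sigma\otimes\chi),\mathrm{spin},\psi) = \gamma(s,\sigma\otimes\chi',\mathrm{spin},\psi)$ matching $\gamma(s,\spin_*(\sigma)\otimes\chi,\mathrm{std},\psi)$. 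On the other hand, by part (i) applied to $\sigma\otimes\chi$ (which is again a generic discrete series since $\chi$ is quadratic), $\gamma(s,\sigma\otimes\chi,\mathrm{spin},\psi) = \gamma(s,\spin_*(\sigma\otimes\chi),\mathrm{std},\psi)$. Comparing, $\spin_*(\sigma\otimes\chi)$ and $\spin_*(\sigma)\otimes\chi$ are two irreducible (or at least isobaric, generic) representations of $\GL_8$ with equal twisted standard $\gamma$-factors against all characters — in particular against the trivial character and all $\GL_1$ twists — so by the converse-theorem-type rigidity (or directly by the local Langlands classification for $\GL_8$, since $\gamma$-factors of all $\GL_1\times\GL_8$ Rankin--Selberg products determine the representation), they must be isomorphic. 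Alternatively, one can run the global argument directly: $\spin_*(\Sigma\otimes\chi) = \spin_*(\Sigma)\otimes\chi$ because theta lifting and the $\SO_8\to\GL_8$ transfer are both compatible with quadratic-character twists on $\PGSp$, and then specialize at $w$.

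The main obstacle I expect is the globalization with simultaneous control of (a) the local component at $w$ being exactly $\sigma$, (b) the cuspidality of the global theta lift $\theta(\Sigma)$ to $\PGSO_8$, and (c) enough local control (Steinberg and cohomological components) to pin down the remaining local $\gamma$-factors in the functional-equation bookkeeping; ensuring cuspidality of $\theta(\Sigma)$ in particular is delicate and may require invoking the non-vanishing of a suitable period or a Rallis-type tower argument. A secondary subtlety is that when $\sigma$ is not generic-supercuspidal but a non-supercuspidal generic discrete series, $\spin_*(\sigma)$ may be a genuinely induced (non-tempered or reducible) representation of $\GL_8$, so the rigidity/uniqueness step at the end must be phrased in terms of isobaric representations and their $L$- and $\epsilon$-factors rather than irreducible ones; this is handled by the strong multiplicity-one and isobaric-classification results for $\GL_8$.
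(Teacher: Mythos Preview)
Your overall strategy for (i) --- globalize, compare global functional equations, strip off local factors --- is the same as the paper's, but there are two concrete issues and one missed idea.

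First, a citation error: Proposition~\ref{P:period_global} and Corollary~\ref{C:cuspidal} are globalization results for representations of the anisotropic form of $G_2$, not for $\PGSp$. The correct input here is Proposition~\ref{P:period_global_2} (the second globalization result, for generic representations of split groups). Relatedly, the paper globalizes over a \emph{totally complex} field, precisely to avoid the archimedean computations you propose. Over a totally real field you would need to know the spin $\gamma$-factor of a real generic discrete series of $\PGSp$ and to identify $\spin_*$ of such a representation; this is not obviously available, whereas for complex places the theta correspondence is completely explicit.

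Second, the key device you are missing is the ``varying $|T|$'' trick. The paper places the target representation $\sigma$ at \emph{several} finite places $u\in T$ (all with $k_u\cong F$), plus a Steinberg component at $w$ and a fixed supercuspidal at $w'$. Comparing the two global functional equations then yields an identity involving $\gamma(s,\sigma,\mathrm{spin},\psi)^{|T|}$ times unknown factors at $w$ and $w'$. Taking the quotient of two such identities with different values of $|T|$ cancels the unknown factors and isolates the $\gamma$-factor for $\sigma$ directly. This replaces your proposed explicit computations at the Steinberg and archimedean places and your appeal to stability under highly ramified twists, neither of which is needed.

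For (ii), your first route has a genuine gap: equality of $\GL_1\times\GL_8$ $\gamma$-factors alone does \emph{not} determine an irreducible (or isobaric) representation of $\GL_8$; the local converse theorem requires twists by $\GL_m$ for $m$ up to $7$. Your ``alternative'' global route is in fact what the paper does: globalize $\chi$ to a Hecke character, observe at unramified places that $\spin_*(\Sigma\otimes\chi)$ and $\spin_*(\Sigma)\otimes\chi$ are nearly equivalent, and invoke strong multiplicity one for $\GL_8$; then specialize.
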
 
\begin{proof} 
We shall prove this via a classical global to local argument. 
Choose a totally complex number field $k$ with some finite set $T \sqcup \{w, w'\}$ of finite places (with $T$ nonempty) such that $k_u \cong k_w \cong k_{w'} \cong F$ for all $u \in T$. 
By the globalization result in Proposition \ref{P:period_global_2} of Appendix A,  there exists a globally generic cuspidal automorphic representation $\Sigma$ such that 
\begin{itemize} 
\item $\Sigma_u\cong \sigma$ for all $u \in T$; 
\item $\Sigma_w$ is the Steinberg representation ${\rm St}$;
\item $\Sigma_{w'}$ is a generic supercuspidal representation $\sigma'$;
\item $\Sigma_v$ is a principal series representation (induced from the Borel subgroup) at all other finite places $v$.   
\end{itemize}  
 \vskip 5pt
 
 Fix a global additive character $\Psi$ of $k \backslash \mathbb{A}_k$ with $\Psi_w =\Psi_{w'} =  \Psi_u = \psi$ for all $u \in T$. For a sufficiently large set  $S$ of places,  containing $w$, $w'$, $T$ and all archimedean places, we have a global functional equation 
\[ 
L^S(1-s, \Sigma^{\vee}, \mathrm{spin}) = \prod_{v\in S} \gamma(s, \Sigma_v, \mathrm{spin}, \Psi_v)\cdot  L^S(s,  \Sigma ,\mathrm{spin}) 
\] 
where $L^S$ denotes the  partial $L$-function. Since $\Sigma$ is generic, its theta lift $\theta(\Sigma)$ to $\mathrm{PGSO}_8$  is non-zero and generic. Moreover, 
since $\Sigma_w$ is the Steinberg  representation, its theta lift to  $\mathrm{PGSO}_6$ (the smaller group in the Rallis-Witt tower) is 0. Hence  $\theta(\Sigma)$ is cuspidal.  
 \vskip 5pt
 
 By the global analog of the spin lifting discussed in the previous subsection, we have an isobaric generic automorphic representation
 \[  \spin_*(\Sigma) = \mathcal{A} \left(  f_2^*(\theta(\Sigma))  \right) \]
 on $\GL_8$.  Likewise, we have the global functional equation 
\[ 
L^S(1-s, \spin_*(\Sigma)^{\vee}, \mathrm{std}) = \prod_{v\in S} \gamma(s, \spin_*(\Sigma)_v, \mathrm{std}, \Psi_v)\cdot  L^S(s, \spin_*( \Sigma), \mathrm{std}) 
\] 
\vskip 5pt

Next, by the theta correspondence of unramified representations, observe that  
\[  L^S(s, \Sigma, \mathrm{spin}) =L^S(s, \spin_*(\Sigma), \mathrm{std}). \]
Moreover, one can readily check that
\[  \gamma(s, \Sigma_v, \mathrm{spin}, \Psi_v) = \gamma(s, \spin_*(\Sigma)_v, \mathrm{std}, \Psi_v) \quad \text{for all $v \notin T \sqcup \{w , w'\}$.} \]
   Indeed, the local components at finite places outside of $T \sqcup \{w, w' \}$  are generic principal series representations and hence their spin lifts and $\gamma$-factors are easily computable. 
     For complex groups, 
 the theta correspondence  is explicitly known, \cite{AB} (observe here that $\PGSp(\mathbb C)=\Sp(\mathbb C)/\mu_2$ so the usual theta correspondence is already a correspondence for similitude groups), so that one can check the identities at complex places. Hence, we deduce that
 \[   \gamma(s, \Sigma_w, \mathrm{spin}, \Psi_w)  \cdot \gamma(s, \Sigma_{w'}, \mathrm{spin}, \Psi_{w'})  \cdot \prod_{u \in T} \gamma(s, \Sigma_v, \mathrm{spin}, \Psi_u)   \]
 \[=
 \gamma(s, \spin_*(\Sigma)_w, \mathrm{std}, \Psi_w)  \cdot  \gamma(s, \spin_*(\Sigma)_{w'}, \mathrm{std}, \Psi_{w'})  \cdot \prod_{u \in T} \gamma(s, \spin_*(\Sigma)_u, \mathrm{std}, \Psi_u). \] 
 This gives the identity for representations of $G_2(F)$:
 \[     \gamma(s, {\rm St}, \mathrm{spin}, \psi) \cdot  \gamma(s, \sigma', \mathrm{spin}, \psi)   \cdot  \gamma(s, \sigma, \mathrm{spin}, \psi)^{|T|}  \]
 \[ = 
 \gamma(s, \spin_*({\rm St}), \mathrm{std}, \psi)  \cdot  \gamma(s, \spin_*(\sigma'), \mathrm{std}, \psi) \cdot \gamma(s, \spin_*(\sigma), \mathrm{std}, \psi)^{|T|}. \] 
  Since we can vary the size of $T$, by considering the quotient of two such identities (say with $|T| = 1$ and $2$ respectively), we deduce that
  \[  \gamma(s, \sigma, \mathrm{spin}, \psi) = \gamma(s, \spin_*(\sigma), \mathrm{std}, \psi) \]
  as desired. This proves (i).

 \vskip 5pt
 
 For (ii), globalize $\chi$ to a quadratic Hecke character $\mathcal{\chi}$ of $\mathbb{A}_k^{\times}$. 
 By examining places outside $S$,  one checks readily that $\spin_*(\Sigma \otimes \mathcal{\chi})$ and $\spin_*(\Sigma) \otimes \mathcal{\chi}$ are nearly equivalent.
 By the strong multiplicity one theorem for $\GL_8$, it follows that $\spin_*(\sigma \otimes \chi) \cong  \spin_*(\sigma)\otimes \chi$, as desired.
\end{proof}

\vskip 10pt

 \section{\bf Results of Kret-Shin}  \label{S:KS}
 In this section, we briefly recall a special case of some recent results of Kret-Shin relevant for our applications. By combining the Spin lifting of the previous section and the results of Kret-Shin recalled here, we shall prove (vi) of the Main Theorem.
 \vskip 5pt

 \subsection{\bf Results of Kret-Shin}  \label{SS:KS}
 Assume that $\Sigma$ is a cuspidal automorphic representation of $\PGSp$ over a totally real number field $k$ such that
 \begin{itemize}
 \item $\Sigma_{\infty}$ is a sufficiently regular generic discrete series representation at each archimedean place $\infty$;
 \item $\Sigma_u$ is the Steinberg representation for some finite place $u$ of $k$.
\end{itemize}
Then Kret-Shin \cite[Thm. A]{KS} constructed a global Galois representation
\[ \begin{CD}
 \rho_{\Sigma}: {\rm Gal}(\overline{k}/k) @>>> \Spin_7(\overline{\Q}_l)   \end{CD} \]
satisfying:
\vskip 5pt

\begin{itemize}
\item for almost all places $v$, $\rho_{\Sigma}({\rm Frob}_v)_{ss}$ is conjugate to the Satake parameter $s_{\Sigma_v}$ in $\Spin_7(\overline{\Q}_l)$
\vskip 5pt

\item ${\std}\circ \rho_{\Sigma}$ is the Galois representation associated to the restriction ${\rm rest}(\Sigma)$ of $\Sigma$ to $\Sp$. 
\end{itemize}

\vskip 5pt

\subsection{\bf Spin lifting and Kret-Shin parameters}  \label{SS:Spin-KS}
We shall apply the results of Kret-Shin to the following problem. Suppose that $\sigma$ is an irreducible generic discrete series representation of $\PGSp$. Then
$\sigma$ determines a pair
\[  (\phi^{\flat}, \phi^{\#}) \in \Phi(\Sp) \times \Phi(\GL_8) \]
where  $\phi^{\flat}$ is the L-parameter of ${\rm rest}(\sigma)$  and $\phi^{\#}$ is the L-parameter of the Spin lifting  $\spin_*(\sigma)$   On the other hand, one has the natural map
\[  {\rm std}_* \times \spin_*: \Phi(\PGSp)  \longrightarrow    \Phi(\Sp)  \times \Phi(\GL_8) \]
and it is natural to ask if there exists $\phi \in \Phi(\PGSp)$  such that
\[  {\rm std}_*(\phi) = \phi^{\flat} \quad \text{and} \quad \spin_*(\phi) = \phi^{\#}. \]
Such a $\phi$ would be a natural candidate for an L-parameter of $\sigma$. 
The results of Kret-Shin allows us to show the existence of such a $\phi$. 
\vskip 5pt

\begin{prop} \label{P:KS}
Let $\sigma \in \Irr_{gen,ds}(\PGSp)$ be an irreducible generic discrete series representation. Then there exists $\phi \in \Phi_{ds}(\PGSp)$ such that
\vskip 5pt

\begin{itemize}
\item ${\rm std}_*(\phi)$ is the L-parameter of ${\rm rest}(\sigma)$ and
\item $\spin_*(\phi)$ is the L-parameter of $\spin_*(\sigma)$.
\end{itemize}
\end{prop}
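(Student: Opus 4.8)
The plan is to prove Proposition \ref{P:KS} by a global-to-local argument that leverages the Kret--Shin construction of $\Spin_7$-valued Galois representations together with the compatibility of the spin lifting with $\gamma$-factors established in Proposition \ref{P:key}. First I would globalize $\sigma$: using the globalization result of Appendix A (e.g. Proposition \ref{P:period_global_2}) applied over a totally \emph{real} number field $k$ with a place $w_0$ such that $k_{w_0} \cong F$, I would produce a globally generic cuspidal automorphic representation $\Sigma$ of $\PGSp$ with $\Sigma_{w_0} \cong \sigma$, with $\Sigma_{\infty}$ a sufficiently regular generic (holomorphic) discrete series at each archimedean place, and with $\Sigma_u$ the Steinberg representation at some auxiliary finite place $u \ne w_0$. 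The Steinberg component guarantees cuspidality is preserved under the relevant transfers, and the regularity at infinity puts us in the setting where the results of Kret--Shin \cite{KS} apply.

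Next I would invoke Kret--Shin \cite[Thm.~A]{KS} to obtain a global Galois representation $\rho_{\Sigma}: \mathrm{Gal}(\overline{k}/k) \to \Spin_7(\overline{\Q}_l)$ with $\rho_{\Sigma}(\mathrm{Frob}_v)_{ss}$ conjugate to the Satake parameter $s_{\Sigma_v}$ at almost all $v$, and such that $\std \circ \rho_{\Sigma}$ is the Galois representation attached to $\mathrm{rest}(\Sigma)$ (via Arthur's transfer of $\mathrm{rest}(\Sigma)$ to $\GL_7$, which is cuspidal because of the Steinberg place). I would then take the localization $\phi := \rho_{\Sigma}|_{\mathrm{Gal}(\overline{k}_{w_0}/k_{w_0})}$, regarded as a local parameter valued in $\Spin_7(\C) = \PGSp^{\vee}$ after the usual Weil--Deligne normalization; by the local-global compatibility for classical groups \cite{TY, Ca} applied to $\std \circ \rho_{\Sigma}$, the parameter $\std_*(\phi)$ is precisely the L-parameter of $\mathrm{rest}(\Sigma_{w_0}) = \mathrm{rest}(\sigma)$, giving the first required property. (One should also check $\phi \in \Phi_{ds}(\PGSp)$: since $\mathrm{rest}(\sigma)$ is discrete series, $\std_*(\phi)$ has no invariants and is multiplicity-free, which forces $Z_{\Spin_7(\C)}(\phi)$ to be finite.)

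For the second property, $\spin_*(\phi)$ should equal the L-parameter of the $\GL_8$-representation $\spin_*(\sigma)$. I would compare the two $8$-dimensional Weil--Deligne representations $\spin \circ \phi$ and $\phi^{\#} := $ the parameter of $\spin_*(\sigma)$ place by place. At almost all places they agree by the Satake-parameter compatibility: the unramified local spin lifting matches $\spin(s_{\Sigma_v})$ with the Satake parameter of $\spin_*(\Sigma)_v = \spin_*(\Sigma_v)$, which is controlled by $\rho_{\Sigma}$ via $\spin \circ \rho_{\Sigma}$. To upgrade ``almost everywhere'' to ``everywhere at $w_0$'', I would use Proposition \ref{P:key}(i): $\gamma(s, \sigma, \spin, \psi) = \gamma(s, \spin_*(\sigma), \std, \psi)$, so the local constants of $\spin \circ \phi$ (computed via Langlands--Shahidi for the spin $\gamma$-factor, which by Kret--Shin is realized by $\rho_{\Sigma}$) match those of $\phi^{\#}$; combining this with equality of the partial $L$-functions $L^S(s,\Sigma,\spin) = L^S(s,\spin_*(\Sigma),\std)$ and the global functional equations on both sides forces the remaining local factors — in particular at $w_0$ — to agree, and then equality of all local $\gamma$- and $L$-factors of two $\GL_8$-parameters (twisted by all characters, using Proposition \ref{P:key}(ii) for the quadratic twists and the usual converse-type rigidity for $\GL_n$ local parameters) yields $\spin \circ \phi \cong \phi^{\#}$ as Weil--Deligne representations. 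Hence $\spin_*(\phi) = \phi^{\#}$, as required.

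The main obstacle I anticipate is the last rigidity step: knowing that two $8$-dimensional Weil--Deligne representations have the same $\gamma$-factors (even after all character twists) is not quite enough to conclude they are isomorphic, since the full local converse theorem for $\GL_8$ requires twists by $\GL_n$-representations for $n$ up to roughly $7$, not just by characters. I expect one circumvents this exactly as in the definition of $\mathcal{L}$ itself: first establish the identity of parameters at \emph{unramified} and \emph{Steinberg} places directly, then run the global comparison with the number of ``bad'' places (the set $T$ of copies of $F$) varying, so that the ratio of two functional equations isolates the single local factor at $w_0$ — this is the same mechanism used in the proof of Proposition \ref{P:key}, and it avoids any appeal to a strong local converse theorem. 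A secondary technical point is ensuring the globalization can simultaneously arrange the Steinberg place, the archimedean regularity demanded by Kret--Shin, and genericity; this is precisely what the globalization results of Appendix A are designed to deliver.
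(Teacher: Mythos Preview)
Your globalization setup and the invocation of Kret--Shin to produce $\rho_\Sigma$ and its localization $\phi$ are exactly what the paper does, and your verification that ${\rm std}_*(\phi)$ is the parameter of ${\rm rest}(\sigma)$ via local-global compatibility on the $\GL_7$ side is correct. The divergence is in how you handle the spin condition, and there your argument has the gap you yourself identify, while the paper's route avoids it entirely.

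The paper's argument for $\spin_*(\phi)$ is far more direct than your $\gamma$-factor comparison and does not use Proposition~\ref{P:key} at all. The point is simply this: the global spin lifting $\spin_*(\Sigma)$ is an $L$-algebraic isobaric automorphic representation of $\GL_8$, and by matching Satake parameters at unramified places one sees that $\spin \circ \rho_\Sigma$ is \emph{the} Galois representation attached to $\spin_*(\Sigma)$. Now apply local-global compatibility \cite{TY, Ca} on the $\GL_8$ side (not only on the $\GL_7$ side): at the place $w_0$, the Weil--Deligne representation corresponding to $(\spin\circ\rho_\Sigma)|_{w_0} = \spin \circ \phi$ is the L-parameter of $\spin_*(\Sigma)_{w_0} = \spin_*(\sigma)$. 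That is the desired identity $\spin_*(\phi) = \mathcal{L}_{\GL_8}(\spin_*(\sigma))$, obtained in one stroke.

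Your route, by contrast, does not close. The step ``equality of $\gamma$- and $L$-factors twisted by characters forces $\spin\circ\phi \cong \phi^{\#}$'' is false in general for $\GL_8$: the local converse theorem genuinely requires twists by $\GL_m$ for $m$ up to about $4$, and Proposition~\ref{P:key}(ii) only supplies \emph{quadratic} twists, not even all character twists. The ``vary $|T|$'' trick from the proof of Proposition~\ref{P:key} isolates a single $\gamma$-factor identity at $w_0$, but that is strictly weaker than what you need. Moreover, your parenthetical ``computed via Langlands--Shahidi for the spin $\gamma$-factor, which by Kret--Shin is realized by $\rho_\Sigma$'' is circular: Kret--Shin give no a priori relation between $\rho_\Sigma$ and the Langlands--Shahidi spin $\gamma$-factor at $w_0$ --- establishing such a relation is essentially the content of the proposition. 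The fix is to drop the $\gamma$-factor detour entirely and apply local-global compatibility for $\GL_8$ directly, as above.
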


  \vskip 5pt

 \begin{proof}  
Let $k$ be a totally real number field with a place $w$ such that $k_w \cong F$ and consider the group $\PGSp$ over $k$. 
By the globalization result in Proposition \ref{P:period_global_2} , we can find  a globally generic cuspidal  automorphic representation $\Sigma$ such that 
\vskip 5pt

\begin{itemize}
\item $\Sigma_w \cong \sigma$;
\item $\Sigma_u$ is the Steinberg representation for some finite place $u \ne w$
\item $\Sigma_{u'}$ is a generic supercuspidal representation at some other finite place $u'$ different from $w$ and $u$
\item  $\Sigma_{\infty}$ is a sufficiently regular generic discrete series representation for all archimedean places $\infty$. 
\end{itemize}
Such a $\Sigma$ is $L$-algebraic in the sense of \cite{BG}. 
\vskip 5pt

Now on one hand,  one may consider the global Spin lifting $\spin_*(\Sigma)$: this  is an isobaric automorphic representation of $\GL_8$ which is $L$-algebraic in the sense of \cite{BG}. On the other hand, 
by Kret-Shin \cite[Thm. A]{KS}, one can associate to $\Sigma$ a $\Spin_7(\overline{\Q}_l)$-valued  Galois representation $\rho_{\Sigma}$. Then the 8-dimensional representation $\spin \circ \rho_{\Sigma}$ is a Galois representation associated to $\spin_*(\Sigma)$ (by considering unramified places). 
 Let $\phi$ be the local L-parameter corresponding to the local Galois representation deduced from $\rho_{\Sigma}$ at the place $w$.  Then by local-global compatibility \cite{TY, Ca}, one sees readily that $\phi$ satisfies the requisite conditions of the proposition. 
 \end{proof}
 
 \vskip 5pt
 
 For a given $\sigma \in \Irr_{gen,ds}(\PGSp)$, we will call a $\phi$ which satisfies the conditions of Proposition \ref{P:KS} a Kret-Shin parameter of $\sigma$. Unfortunately, it is not true that $\sigma$ necessarily has a unique Kret-Shin parameter, because $\Spin_7(\C)$ is not acceptable \cite{CG1}. However, any two Kret-Shin parameter differs by twisting by quadratic characters. The following says that in some cases, the Kret-Shin parameter is unique.
 \vskip 5pt
 
 \begin{prop} \label{P:KS2}
 Assume that $\sigma \in \Irr_{gen, ds}(\PGSp)$ is the theta lift of a generic discrete series $\pi$ of $G_2$. Then the Kret-Shin parameter of $\sigma$ is unique (as an element of $\Phi(\PGSp)$) and is given by $\iota \circ \mathcal{L}(\pi)$.
 \end{prop}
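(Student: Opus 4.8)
The plan is to show that any Kret–Shin parameter $\phi$ of $\sigma = \theta(\pi)$ must actually factor through $G_2(\C) \subset \Spin_7(\C)$ and be $G_2(\C)$-conjugate to $\iota \circ \mathcal{L}(\pi)$, after which uniqueness follows because the only ambiguity in a Kret–Shin parameter is twisting by a quadratic character, and such twists are incompatible with factoring through $G_2(\C)$ (which has no nontrivial abelian characters). First I would recall that, by definition of $\mathcal{L}$ on $\Irr^{\diamondsuit}(G_2)$ in Section~\ref{S:definition}, the L-parameter of ${\rm rest}(\sigma) = {\rm rest}(\theta(\pi))$ is ${\rm std}_* \circ \iota_*(\mathcal{L}(\pi)) = {\rm std} \circ \iota \circ \mathcal{L}(\pi)$; this is exactly the $\SO_7(\C)$-valued parameter underlying $\sigma$. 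On the other hand, Proposition~\ref{P:KS} furnishes a Kret–Shin parameter $\phi \in \Phi_{ds}(\PGSp)$ with ${\rm std}_*(\phi) = \phi_{{\rm rest}(\sigma)}$, so ${\rm std} \circ \phi$ is $\SO_7(\C)$-conjugate to ${\rm std} \circ \iota \circ \mathcal{L}(\pi)$. The key point is then to upgrade this conjugacy at the level of $\SO_7(\C)$ to one at the level of $\Spin_7(\C)$ carrying $\phi$ into the image of $\iota$, and here I would invoke exactly the group-theoretic input already used in the construction of $\mathcal{L}$: by Chenevier's result \cite[Thm.~E]{C} (and the results of Gross and Greiss cited in the introduction) an $\SO_7(\C)$-valued parameter whose standard composite lands in (the conjugate of) $G_2(\C)$, together with the constraint coming from the spin $\gamma$-factor, lifts uniquely up to $G_2(\C)$-conjugacy. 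Concretely, $\spin_*(\phi)$ must equal the L-parameter of $\spin_*(\sigma)$, and since $\pi \in \Irr^{\diamondsuit}(G_2)$ has discrete series parameter, $L(s, \spin \circ \iota_*(\mathcal{L}(\pi)))$ has a pole at $s=0$ by Lemma~\ref{L:para}(ii); this pole forces $\spin \circ \phi$ to contain the trivial representation, which is precisely the condition that pins $\phi$ inside $G_2(\C) \subset \Spin_7(\C)$ rather than in some quadratic twist of it.

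Granting that $\phi$ is $\Spin_7(\C)$-conjugate to $\iota \circ \phi_0$ for some $\phi_0 \in \Phi_{ds}(G_2)$, I would next identify $\phi_0$ with $\mathcal{L}(\pi)$. Since ${\rm std} \circ \iota \circ \phi_0 = {\rm std}\circ \phi \sim {\rm std} \circ \iota \circ \mathcal{L}(\pi)$ in $\SO_7(\C)$, and since the map ${\rm std}_* \circ \iota_*$ is injective on $\Phi^{\diamondsuit}_{ds}(G_2)$ by Lemma~\ref{L:para}(iii), we get $\phi_0 = \mathcal{L}(\pi)$, so $\phi = \iota \circ \mathcal{L}(\pi)$ as elements of $\Phi(\PGSp)$. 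Finally, for uniqueness: if $\phi'$ is another Kret–Shin parameter of $\sigma$, then $\phi' = \phi \otimes \chi$ for a quadratic character $\chi$ of $W_F$ (as the proposition preceding this one notes, any two Kret–Shin parameters differ by a quadratic twist, and this twisting corresponds to twisting $\spin_*$ by $\chi$ via Proposition~\ref{P:key}(ii)). But $\phi = \iota \circ \mathcal{L}(\pi)$ factors through $G_2(\C)$, which is its own derived group and has trivial abelianization, so $\phi \otimes \chi$ factors through $G_2(\C)$ only when $\chi$ is trivial; hence $\phi' = \phi$.

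The main obstacle I anticipate is the lifting step from $\SO_7(\C)$ to $\Spin_7(\C)$: a priori a Kret–Shin parameter lives in $\Spin_7(\C)$ by construction (it is the $\rho_\Sigma$ of Kret–Shin composed down to the local place), so one must be careful to use the actual $\Spin_7(\C)$-valued object rather than just its standard projection, and then argue that the non-acceptability of $\Spin_7(\C)$ — the source of the quadratic ambiguity — is resolved precisely by the spin $L$-factor having a pole, i.e. by the genericity and discreteness hypotheses on $\pi$ together with Lemma~\ref{L:para}(ii). In other words, the real content is that the two invariants ${\rm std}_*(\phi)$ and $\spin_*(\phi)$ together determine $\phi$ in this situation, which is exactly the "acceptability in the presence of a pole" phenomenon, and one should phrase the argument so that it is clear this is an application of the already-cited results of Chenevier and the structure theory of $G_2(\C) \subset \Spin_7(\C)$ rather than a new input.
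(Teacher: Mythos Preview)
There is a genuine gap in your existence step. You correctly note that any Kret--Shin parameter $\phi$ of $\sigma$ satisfies $\spin\circ\phi = \mathcal{L}_{\GL_8}(\spin_*(\sigma))$, and you want to conclude that this $8$-dimensional parameter contains the trivial representation (so as to force $\phi$ into $G_2(\C)$). But your justification --- that $L(s,\spin\circ\iota\circ\mathcal{L}(\pi))$ has a pole at $s=0$ by Lemma~\ref{L:para}(ii) --- is a statement about the \emph{Galois-side} object $\iota\circ\mathcal{L}(\pi)$, not about the \emph{representation-theoretic} spin lift $\spin_*(\sigma)$. At this point in the argument you do not yet know that these coincide; indeed, that identification is essentially the content of the proposition. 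So the inference ``this pole forces $\spin\circ\phi$ to contain the trivial representation'' is circular.

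The paper closes this gap by a global argument: globalize $\pi$ to a generic cuspidal $\Pi$ on $G_2$ with a Steinberg local component, take its global theta lift $\Sigma$ to $\PGSp$, and compare $\spin_*(\Sigma)$ with $1\boxplus\mathcal{A}({\rm rest}(\Sigma))$ at unramified places. Strong multiplicity one on $\GL_8$ then gives $\spin_*(\Sigma)=1\boxplus{\rm std}_*(\Sigma)$, whence the local L-parameter of $\spin_*(\sigma)$ is $1\oplus\phi^{\flat}$. Only after this does one know that $\iota\circ\mathcal{L}(\pi)$ is a Kret--Shin parameter. Your uniqueness argument also needs repair: from $\phi'=\phi\otimes\chi$ you cannot conclude $\chi$ is trivial merely because $\phi$ factors through $G_2(\C)$ --- you have not shown $\phi'$ does. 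What one actually uses (once $\spin\circ\phi'=1\oplus\phi^{\flat}$ is known) is either the group-theoretic fact that $\spin\circ\phi'\supset 1$ forces $\phi'$ into a $\Spin_7$-conjugate of $G_2(\C)$ and then Lemma~\ref{L:para}(iii), or directly \cite[Lemma~4.6]{HKT} as the paper does.
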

 \vskip 5pt
 
 \begin{proof}
 Let $k$ be a number field as in the proof of Proposition \ref{P:KS} and consider the split group $G_2$ over $k$. By Proposition \ref{P:period_global_2} in Appendix A, we can find a globally generic cuspidal representation $\Pi$ such that $\Pi_w = \pi$ and $\Pi_u$ is the Steinberg representation for some finite place $u \ne w$. The global theta lift of $\Pi$ to $\PGSp$ is then a nonzero globally generic cuspidal representation $\Sigma$ of $\PGSp$. 
  As in the proof of Proposition \ref{P:KS}, by considering the global Spin lifting of $\Sigma$ to $\GL_8$ and looking at unramified places, we deduce by the strong multiplicity one theorem that  
 \[  \spin_*(\Sigma) = 1 \boxplus {\rm std}_*(\Sigma). \]
 This implies that the L-parameter of $\spin_*(\sigma)$ is $1 + \phi^{\flat}$.  On the other hand, it is also the case that
 \[   \spin \circ \iota \circ \mathcal{L}(\pi) =  1 + \phi^{\flat}. \]
Hence,  $\iota \circ \mathcal{L}(\pi)$ is a Kret-Shin parameter for $\sigma$. 
\vskip 5pt

Suppose $\phi$ is another Kret-Shin parameter for $\sigma$. Then $\phi$ is a quadratic twist of $\iota \circ\mathcal{L}(\pi)$ and $\spin \circ \phi = 1  \oplus \phi^{\flat}$. It follows by
 \cite[Lemma 4.6]{HKT} (which is a group theoretic result based on \cite{Gr95}) that  $\iota \circ \mathcal{L}(\pi)$ and $\phi$ are in fact conjugate in $\Spin_7(\C)$, i.e
\[   \phi = \iota \circ \mathcal{L}(\pi) \in \Phi(\PGSp).\]
This completes the proof of the proposition.
 \end{proof} 
 
 \vskip 5pt
 
 \begin{cor}  \label{C:six}
 One has the commutative diagram in (vi) of the Main Theorem.
 \end{cor}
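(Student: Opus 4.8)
The plan is to obtain this as a formal consequence of Propositions \ref{P:KS} and \ref{P:KS2}, with essentially no new work. Fix $\pi \in \Irr_{gen,ds}(G_2)$ and set $\sigma = \theta(\pi)$, its theta lift to $\PGSp$. First I would record that $\sigma$ is again a generic discrete series representation of $\PGSp$, so that the two propositions apply to it: this is part of the package of results of \cite{GS} recalled in \S\ref{S:dichotomy} (theta preserves genericity, carries tempered representations to tempered representations by Theorem \ref{T:dichotomy}(iii)(b), and carries discrete series to discrete series). The only mild bookkeeping point here is to confirm this membership in $\Irr_{gen,ds}(\PGSp)$; it is immediate from the recalled structure of the correspondence.

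Next, by Proposition \ref{P:KS2}, the Kret--Shin parameter of $\sigma$ is unique and equals $\iota \circ \mathcal{L}(\pi) \in \Phi(\PGSp)$, i.e. $\iota_*(\mathcal{L}(\pi))$. By the defining property of a Kret--Shin parameter established in Proposition \ref{P:KS}, the image $\spin_*(\iota_*(\mathcal{L}(\pi)))$ is precisely the L-parameter of the spin lifting $\spin_*(\sigma)$. Since the bottom horizontal arrow $L$ of diagram (vi) sends $\spin_*(\sigma) = \spin_*(\theta(\pi)) \in \Irr(\GL_8)$ to its L-parameter under the known LLC for $\GL_8$, this is exactly the statement that the left-then-bottom composite $\pi \mapsto \theta(\pi) \mapsto \spin_*(\theta(\pi)) \mapsto L(\spin_*(\theta(\pi)))$ agrees with the top-then-right composite $\pi \mapsto \mathcal{L}(\pi) \mapsto \iota_*(\mathcal{L}(\pi)) \mapsto \spin_*(\iota_*(\mathcal{L}(\pi)))$. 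As this is the only identity needed for commutativity of the outer rectangle, the corollary follows.

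I do not expect any genuine obstacle: all the substantive content—globalization (Appendix A), the spin lifting of \S\ref{S:triality}, the $\Spin_7$-valued Galois representations of Kret--Shin, and the group-theoretic rigidity input of \cite{HKT} that forces uniqueness of the Kret--Shin parameter in the relevant case—has already been carried out inside Propositions \ref{P:KS} and \ref{P:KS2}. The proof of the corollary is therefore just the assembly of these two statements.
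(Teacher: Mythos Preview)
Your proposal is correct and matches the paper's approach: the paper states Corollary \ref{C:six} immediately after Proposition \ref{P:KS2} with no proof, intending it to be read off from the defining property of a Kret--Shin parameter together with the identification in Proposition \ref{P:KS2}. Your unpacking of this (that $\iota\circ\mathcal{L}(\pi)$ is the Kret--Shin parameter of $\theta(\pi)$, and hence $\spin\circ\iota\circ\mathcal{L}(\pi)$ is the L-parameter of $\spin_*(\theta(\pi))$) is exactly the intended argument.
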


 \section{\bf Surjectivity of $\mathcal{L}^{\diamondsuit}$}  \label{S:surj}
Using the results of the previous two sections,  we shall explain in this section how one can show the surjectivity of 
 \[  \mathcal{L}^{\diamondsuit}: \Irr_{ds}^{\diamondsuit}(G_2) \longrightarrow \Phi_{ds}^{\diamondsuit}(G_2). \]
 The main result of this section is:
 \vskip 5pt
  
 \begin{prop} \label{P:cuspidal_existence} 
 Let $\phi : WD_F \rightarrow G_2(\C)$ be a discrete series parameter belonging to $\Phi_{ds}^{\diamondsuit}(G_2)$.
 Then there exists a generic discrete series representation $\pi$ of $G_2$ such that $\mathcal{L}(\pi) = \phi$. 
 \end{prop}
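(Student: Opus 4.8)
The plan is to produce $\pi$ by a global argument, reversing the chain of constructions used to define $\mathcal{L}^{\diamondsuit}$. First I would globalize the given local parameter $\phi \in \Phi^{\diamondsuit}_{ds}(G_2)$. Concretely, pick a totally real number field $k$ with a place $w$ such that $k_w \cong F$ and two auxiliary finite places $u, u'$; using the globalization of generic discrete series parameters (of the kind supplied by Appendices A and B, together with the globalization results for $G_2$ and $\PGSp$ already invoked in \S\ref{SS:well} and \S\ref{S:KS}), arrange a globally generic cuspidal automorphic representation $\Sigma$ of $\PGSp$ over $k$ with $\Sigma_w$ having local parameter $\iota_*(\phi)$, with $\Sigma_u$ the Steinberg representation (to force cuspidality of transfers and to place us in the Kret--Shin setting), with $\Sigma_{u'}$ a generic supercuspidal, and with sufficiently regular generic discrete series at all archimedean places. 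The key point I want from $\phi \in \Phi^{\diamondsuit}_{ds}$ is, via Lemma~\ref{L:para}(ii), that $L(s, \spin \circ \iota_*(\phi))$ has a pole at $s=0$; globally this forces the spin lifting $\spin_*(\Sigma)$ to contain the trivial character as an isobaric summand, i.e.\ $\spin_*(\Sigma) = 1 \boxplus {\rm std}_*(\Sigma)$.

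The next step is to descend $\Sigma$ to $G_2$ via theta correspondence, running the dual pair $G \times \PGSp \subset E_7$ (or its $k$-form $E_{7,3}$) in reverse. Here $G$ is an inner form of $G_2$ chosen to be split at $w$, $u$, $u'$ and anisotropic at the archimedean places (the same setup as in \S\ref{SS:well}). The global theta lift $\Pi := \theta(\Sigma)$ from $\PGSp$ to $G$ is cuspidal and globally generic provided it is nonzero, and nonvanishing of the generic global theta lift is exactly governed by the nonvanishing of the relevant partial $L$-function / the pole condition on the spin $L$-function --- this is precisely why the hypothesis $\phi \in \Phi^{\diamondsuit}_{ds}$ (equivalently, the pole of $L(s, \spin \circ \iota_*(\phi))$) is needed. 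Once $\Pi$ is nonzero it is globally generic cuspidal with a Steinberg local component at $u$, so it is regular algebraic with a Steinberg component; I then set $\pi := \Pi_w$, which is a generic supercuspidal (hence discrete series) representation of $G_2(F) = G(k_w)$, lying in $\Irr^{\heartsuit}(G_2)$ and in fact in $\Irr^{\diamondsuit}_{ds}(G_2)$ by the dichotomy/trichotomy of Theorem~\ref{T:dichotomy} and the Proposition following it (it cannot lie in $\Irr^{\spadesuit\clubsuit}$ since $\phi \notin \Phi^{\spadesuit\clubsuit}$, using Lemma~\ref{L:para}).

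It then remains to identify $\mathcal{L}(\pi)$ with $\phi$. By construction $\mathcal{L}(\pi) = \phi_{{\rm rest}(\theta(\pi))}$ viewed in $G_2(\C)$, and $\theta(\pi) = \Sigma_w$ has local parameter $\iota_*(\phi)$ by Proposition~\ref{P:KS2} --- indeed $\Pi$ is a globally generic cuspidal representation of $G_2$ with a Steinberg component, so $\Sigma$ is its theta lift and the Kret--Shin parameter of $\Sigma_w$ is unique and equals $\iota \circ \mathcal{L}(\Pi_w)$; on the other hand the Kret--Shin/local-global compatibility at $w$ computes this parameter to be $\iota_*(\phi)$. Since $\iota_*$ is injective on $\Phi(G_2)$ (Lemma~\ref{L:para}(ii)--(iii)), we conclude $\mathcal{L}(\pi) = \phi$, and since $\Pi_w$ is generic we also record that the fiber contains a generic element. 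Finally, replacing the generic supercuspidal $\pi$ by an arbitrary generic discrete series in the conclusion is harmless, as generic supercuspidal representations are in particular generic discrete series.

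I expect the main obstacle to be the nonvanishing of the global theta lift $\theta(\Sigma)$ from $\PGSp$ back to $G$: one must translate the pole of $L(s, \spin \circ \iota_*(\phi))$ at the local place into a genuine global statement (a pole or nonvanishing of the corresponding global $L$-value, or a Siegel--Weil / Rallis inner product computation for the dual pair in $E_{7,3}$) guaranteeing $\Pi \neq 0$, while simultaneously controlling the archimedean and Steinberg local components so that $\Pi$ is cuspidal, globally generic, and regular algebraic. This is the same circle of ideas as in \cite{GrS2} and the globalizations in the Appendices, but the bookkeeping --- ensuring the local theta lifts are nonzero at the chosen finite places, and that the global lift lands in the cuspidal spectrum rather than the residual one --- is where the real work lies.
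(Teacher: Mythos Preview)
Your strategy has two genuine gaps that the paper's proof avoids.

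First, you propose to ``arrange a globally generic cuspidal $\Sigma$ of $\PGSp$ with $\Sigma_w$ having local parameter $\iota_*(\phi)$.'' But the LLC for $\PGSp$ is not available, so it is not even clear what representation $\sigma$ of $\PGSp(F)$ you are globalizing. The paper fixes this by working on $\Sp$, where the LLC is known: take the unique generic $\sigma^{\flat}$ in the $\Sp$-packet of $\phi^{\flat} = \std\circ\iota\circ\phi$, choose any $\sigma\in\Irr(\PGSp)$ with $\sigma^{\flat}\subset{\rm rest}(\sigma)$, and then use Proposition~\ref{P:KS} and Proposition~\ref{P:key}(ii) to twist $\sigma$ by a quadratic character so that $\iota\circ\phi$ becomes a Kret--Shin parameter for $\sigma$. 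This twisting step is essential and is missing from your outline.

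Second, and more seriously, your plan to pass from the \emph{local} pole of $L(s,\spin\circ\iota\circ\phi)$ at $s=0$ to a \emph{global} decomposition $\spin_*(\Sigma)=1\boxplus\std_*(\Sigma)$ does not work: a pole of the local $L$-factor at a single place $w$ says nothing about the global isobaric decomposition, which would require the Kret--Shin parameter to land in $G_2(\C)$ at \emph{every} place. (In Lemma~\ref{L:allin} the paper does use global descent via \cite{GS_compositio}, but there $\Sigma$ is already known to be a theta lift from $G_2$, so the global pole is guaranteed.) The paper sidesteps this entirely: once $\iota\circ\phi$ is a Kret--Shin parameter for $\sigma$, Proposition~\ref{P:key}(i) gives $L(0,\sigma,\spin)=L(0,\spin\circ\iota\circ\phi)=\infty$ for the \emph{local} Langlands--Shahidi $L$-factor, and then \cite[Prop.~4.6, Thm.~5.3]{SW} converts this purely local pole into nonvanishing of the \emph{local} theta lift of $\sigma$ to $G_2$. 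No global descent is needed.
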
 
 \vskip 5pt
 
 \begin{proof} 
   If the L-parameter $\phi$ is nontrivial on the Deligne $\SL_2$ in $WD_F = W_F \times \SL_2(\C)$, then  the discussion in \cite[\S 3.5]{GS} gives us a candidate non-supercuspidal generic discrete series $\pi \in \Irr(G_2)$ with L-parameter $\phi$.  The results on theta correspondence in \cite{GS} allows us to determine $\theta(\pi) \in \Irr(\PGSp)$ explicitly, as a non-supercuspidal discrete series representation. Using the knowledge of the LLC for $\Sp$ (specifically the L-parameters of generic non-supercuspidal discrete series representations), one readily checks that $\mathcal{L}(\pi) = \phi$ as desired.
 
  \vskip 5pt
  
  Henceforth, we focus on the case when $\phi: W_F \longrightarrow G_2(\C)$ is trivial on the Deligne $\SL_2$.
Then the L-parameter $\iota \circ \phi$ is a supercuspidal  L-parameter of $\PGSp(F)$. Likewise,  $\phi^{\flat}:= {\rm std} \circ \iota \circ \phi $ is a supercuspidal L-parameter of $\Sp$ and its L-packet contains a unique generic supercuspidal  representation $\sigma^{\flat}$ with trivial central character.
Let $\sigma$ be any irreducible generic supercuspidal representation of $\PGSp$ such that the restriction to $\Sp$ contains $\sigma^{\flat}$.
The main issue is to show that we can pick $\sigma$ so that it  has nonzero theta lift to $G_2$. 
  
\vskip 10pt
Let $\phi_{\sigma}$ be a Kret-Shin parameter of $\sigma$ (as provided by Proposition \ref{P:KS}).  
Then one has
  \[ 
  {\rm std} \circ \phi_{\sigma} = {\rm std} \circ \iota \circ \phi, \]
     so that  there is a quadratic character $\chi$ such that $\iota\circ \phi  = \phi_{\sigma} \otimes \chi$.
     But by Proposition \ref{P:key}(ii), $\phi_{\sigma} \otimes \chi$ is a Kret-Shin parameter for $\sigma \otimes \chi$. Hence, 
      after replacing $\sigma$  by   $\sigma \otimes \chi$,  we may assume that $\iota \circ \phi$ is a Kret-Shin parameter for $\sigma$.
      In particular,  we have the following identities of local L-functions:
 \[   
 L(s, \sigma, \mathrm{spin}) = L(s, \spin_*(\sigma), \mathrm{std}) =
 L(s, \spin \circ \iota \circ\phi)  
 \] 
 with the first equality holding by  Proposition \ref{P:key}(i) and the second holding by Proposition \ref{P:KS} (and the property of the LLC for $\GL_8$).
 Since 
 \[  \spin \circ \iota \circ \varphi = 1 \oplus  {\rm std} \circ \iota \circ  \varphi  =1  \oplus \phi^{\flat}\]
 and thus  fixes a line in the 8-dimensional representation, we see that
 \[  L(0, \sigma, \mathrm{spin}) = L(0, \spin \circ \iota \circ\phi)  = \infty. \]
 Now it follows by \cite[Prop. 4.6 and Thm. 5.3]{SW} that $\sigma$ has nonzero theta lift $\pi$ on $G_2$, so that $\theta(\pi) = \sigma$ and
 $\mathcal{L}(\pi) = \phi$.    This completes the proof of the proposition.
   \end{proof}  
 
\vskip 10pt

\section{\bf Results of Bin Xu}  \label{S:Xu}
After the results of the previous sections, the remaining issue is the parametrization of the fibers of $\mathcal{L}^{\diamondsuit}$ over $\Phi_{ds}^{\diamondsuit}(G_2)$. 
For this, we shall appeal to recent results of Bin Xu \cite{Xu1,Xu2,Xu3} who   studied the problem of extending the LLC for the isometry groups ${\rm Sp}_{2n}$  to the corresponding similitude groups $\GSp_{2n}$, whose Langlands dual groups are $\GSpin_{2n+1}(\C)$. 
Let us briefly recall his results, specialized to the context of $n =3$.
\vskip 5pt

Take  an L-parameter $\phi^{\flat}$ for $\Sp$ with associated L-packet $\Pi_{\phi^{\flat}}$. 
Suppose that the representations in $\Pi_{\phi^{\flat}}$ has trivial central character (i.e. that $\phi^{\flat}$ can be lifted to $\Spin_7(\C)$).
We consider the finite set
\begin{equation} \label{E:flat}
 \tilde{\Pi}_{\phi^{\flat}}  =\{  \tilde{\pi} \in \Irr( \PGSp):   \,    {\rm rest}(\tilde{\pi}) \subset \Pi_{\phi^{\flat}} \}  \subset  \Irr (\PGSp). \end{equation} 
Observe that the group $\Hom(F^{\times}, \mu_2)$ of quadratic characters of $F^{\times}$ can be regarded as the group of quadratic characters of $\PGSp$ (by composition with the similitude map) and acts naturally on $\tilde{\Pi}_{\phi^{\flat}}$ by twisting. 

\vskip 5pt

 On the other hand,    consider the (finite)  set
 \begin{equation} \label{E:flat2} 
  \tilde{\Phi}_{\phi^{\flat}} := \{  \phi  \in \Phi(\PGSp) :    {\rm std}  \circ \phi = \phi^{\flat} \} \end{equation}
 of lifts of $\phi$  to $\Spin_7(\C)$.  The group $\Hom(W_F, \mu_2)$ of quadratic characters of $W_F$ acts transitively on $\tilde{\Phi}_{\phi^{\flat}}$ by twisting.  In particular, one has a canonical identification of component groups for any two elements of $\tilde{\Phi}_{\phi^{\flat}}$.
\vskip 5pt

 In \cite{Xu1, Xu2}, Xu has obtained a partition of $\tilde{\Pi}_{\phi^{\flat}}$ into a disjoint union of finite subsets satisfying the following list of properties, which is a compilation of 
 \cite[Prop. 6.27, Prop. 6.28 and  Thm. 6.30]{Xu1}, \cite[Prop. 4.4 and Thm. 4.6]{Xu2}) and \cite[Thm. 4.1]{Xu3}:
\vskip 5pt

\begin{itemize}
\item[(a)]    If $\tilde{\Pi}_{\phi^{\flat}}^X \subset \tilde{\Pi}_{\phi^{\flat}}$ denotes one such subset, then all others are of the form $\tilde{\Pi}_{\phi^{\flat}}^X \otimes \chi $ for a quadratic character $\chi$. We will call any of these subsets   a Xu's packet. 
\vskip 5pt

\item[(b)]   The natural restriction of representations of $\PGSp$ to $\Sp$ defines a bijection
\[  \tilde{\Pi}_{\phi^{\flat}}^X  \longrightarrow  \Pi_{\phi^{\flat}} / _{\PGSp}.  \] 
\vskip 5pt

\item[(c)]  There is a natural bijection
 \[  \tilde{\Pi}^X_{\phi^{\flat}} \longleftrightarrow \Irr (S_{\phi}/ Z(\Spin_7)) \]
  for any lift $\phi  \in \tilde{\Phi}_{\phi^{\flat}}$ with component group $S_{\phi}$.

\vskip 5pt

\item[(d)]  With respect to the parametrization above, the set $\tilde{\Pi}^X_{\phi,\nu}$ satisfies the stability properties and local character identities required by the theory of endoscopy. 

\vskip 5pt

\item[(e)] For any $\sigma \in \tilde{\Pi}^X_{\phi^{\flat}}$ and any $\phi \in \tilde{\Phi}_{\phi^{\flat}}$,  the stabilizer of $\sigma$  in  $\Hom(F^{\times}, \mu_2)$  is equal to the stabiliizer of  $\phi$ in $\Hom(W_F, \mu_2)$  (identified via local class field theory). 
 In particular,  
  \[  \# \{  \text{Xu's packets} \subset \tilde{\Pi}_{\phi^{\flat}}\}  =  \# \tilde{\Phi}_{\phi^{\flat}} \]
and the two sets above are (noncanonically)  isomorphic as homogeneous sets under $\Hom(F^{\times}, \mu_2) = \Hom(W_F, \mu_2)$.

\vskip 5pt

\item[(f)]  Globally, Xu used these local L-packets to describe the tempered part of the automorphic discrete spectrum of $\PGSp$ in the style of Arthur's conjecture, in terms of an Arthur multiplicity formula. 
\end{itemize}

 \vskip 5pt
  
 There is no doubt that Xu's packet $\tilde{\Pi}^X_{\phi^{\flat}}$ is an L-packet for $\PGSp$ and  its L-parameter should be an element of $\tilde{\Phi}_{\phi^{\flat}}$.
  Constructing an LLC for $\PGSp$ compatible with the restriction of representations to $\Sp$ amounts to  constructing  a bijection between these two sets.
  However, this problem was not resolved in \cite{Xu1, Xu2, Xu3}. We shall see in \S \ref{S:app} how this issue can be resolved if $\phi^{\flat}$ takes value in $G_2(\C)$. 
 \vskip 5pt

 \vskip 10pt

 \vskip 10pt
\section{\bf Fibers of $\mathcal{L}$}
Let $\phi \in \Phi_{ds}^{\diamondsuit}(G_2)$ be a discrete series parameter so that $\iota \circ \phi$ is a discrete series parameter of $\PGSp$. 
Then we have shown in Proposition \ref{P:cuspidal_existence} that $\mathcal{L}^{-1}(\phi)$ contains a generic discrete series representation and  is thus nonempty. In this section, we shall enumerate the fiber $\mathcal{L}^{-1}(\phi)$. 
\vskip 5pt

For ease of notation, let us set
\[  \phi^{\flat} = {\rm std} \circ \iota \circ \phi \quad \text{and} \quad \phi^{\#} = \spin \circ \iota \circ \phi. \]
 It is clear from the construction of $\mathcal{L}$ that 
 \[  \mathcal{L}^{-1}(\phi) \subset \tilde{\Pi}_{\phi^{\flat}}  \]
 and as we discussed in the previous section, the latter set is the disjoint union of Xu's packets. 
 \vskip 5pt
 
 \subsection{\bf Uniqueness of generic member}
 We first show:
 \vskip 5pt
 
 \begin{prop} \label{P:unique}
 For $\phi \in \Phi_{ds}^{\diamondsuit}(G_2)$, 
 there is a unique generic representation in $\mathcal{L}^{-1}(\phi)$.
  \end{prop}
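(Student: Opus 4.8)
The statement to prove is that for $\phi \in \Phi_{ds}^{\diamondsuit}(G_2)$, the fiber $\mathcal{L}^{-1}(\phi)$ contains exactly one generic representation. We already know from Proposition \ref{P:cuspidal_existence} that $\mathcal{L}^{-1}(\phi)$ contains at least one generic member, so the content is the \emph{uniqueness}. My approach is to translate the question into the setting of $\PGSp$ via the theta correspondence and then invoke the known uniqueness of the generic member in a tempered $L$-packet of $\Sp$, together with Xu's parametrization. Concretely, if $\pi_1, \pi_2 \in \mathcal{L}^{-1}(\phi)$ are both generic, then by Theorem \ref{T:dichotomy}(iii)(b) the theta lifts $\theta(\pi_1), \theta(\pi_2) \in \Irr(\PGSp)$ are tempered (in fact discrete series, since $\phi$ is a discrete parameter), and by the construction of $\mathcal{L}$ on $\Irr^{\diamondsuit}(G_2)$ both restrict to $\Sp$ into the $L$-packet $\Pi_{\phi^{\flat}}$. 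The first step is therefore to check that $\theta$ takes generic representations of $G_2$ to generic representations of $\PGSp$ — this should follow from the explicit behavior of the theta correspondence on Whittaker models, or from the fact (used already in \cite{GS}, \cite{SW}) that genericity is detected by nonvanishing of an appropriate Fourier coefficient which is preserved under the minimal-representation restriction.

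\textbf{Key steps in order.} (1) Reduce to showing: among representations $\sigma \in \tilde{\Pi}_{\phi^{\flat}}$ with ${\rm rest}(\sigma) \subset \Pi_{\phi^{\flat}}$ that arise as $\theta(\pi)$ for a \emph{generic} $\pi$, there is exactly one. (2) Use that $\sigma = \theta(\pi)$ with $\pi$ generic forces $\sigma$ to be generic, hence ${\rm rest}(\sigma)$ contains the unique generic member $\sigma^{\flat,+}$ of the $\Sp$-packet $\Pi_{\phi^{\flat}}$ (the LLC for $\Sp$ of Arthur, together with the compatibility of genericity with restriction from $\PGSp$ to $\Sp$, pins down which $\Sp$-constituent is generic — here one uses that $\Pi_{\phi^{\flat}}$, being a discrete series packet for a parameter valued in $G_2(\C)$, has a single generic element with respect to the fixed Whittaker datum). (3) Deduce that any two generic members $\sigma_1, \sigma_2$ of $\mathcal{L}^{-1}(\phi)$, viewed inside $\Irr(\PGSp)$, both have restriction containing $\sigma^{\flat,+}$, hence differ by twisting by a quadratic character of $\PGSp$ (by the standard description of restriction from $\PGSp$ to $\Sp$). (4) Pin down the twist: since $\mathcal{L}(\pi_1) = \mathcal{L}(\pi_2) = \phi$, the corresponding $\sigma_i$ lie in the \emph{same} Xu packet $\tilde{\Pi}^X_{\phi^{\flat}}$ attached to the specific lift $\iota \circ \phi \in \tilde{\Phi}_{\phi^{\flat}}$ (this is exactly how $\mathcal{L}$ was constructed on $\Irr^{\diamondsuit}$, via $\phi_{{\rm rest}(\sigma)}$ together with the choice of $G_2$-factorization). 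By Xu's property (b), restriction to $\Sp$ is a bijection $\tilde{\Pi}^X_{\phi^{\flat}} \to \Pi_{\phi^{\flat}}/_{\PGSp}$, so an element of a fixed Xu packet is determined by its $\Sp$-restriction; hence $\sigma_1 = \sigma_2$, and therefore $\pi_1 = \pi_2$ by injectivity of $\theta$ (Theorem \ref{T:dichotomy}(iii)(b)).

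\textbf{Main obstacle.} The delicate point is step (4): controlling the quadratic-character ambiguity. The theta correspondence only sees ${\rm rest}(\sigma)$, i.e. the $\Sp$-packet, whereas the two a priori distinct generic lifts differ by a quadratic twist of $\PGSp$; I must argue that the construction of $\mathcal{L}$ forces them into the same Xu packet. This requires being careful that the map $\sigma \mapsto \phi_{{\rm rest}(\sigma)}$ followed by the chosen factorization through $G_2(\C)$ (Proposition \ref{P:well}) is \emph{consistent} with Xu's parametrization, i.e. that two representations of $\PGSp$ with the same $G_2$-valued parameter $\phi$ indeed lie in a common Xu packet. One way to see this cleanly is to note that the quadratic-twist action on $\tilde{\Pi}_{\phi^{\flat}}$ matches the quadratic-twist action on $\tilde{\Phi}_{\phi^{\flat}}$ (Xu's properties (a) and (e)): distinct Xu packets inside $\tilde{\Pi}_{\phi^{\flat}}$ correspond to distinct lifts $\phi' \in \tilde{\Phi}_{\phi^{\flat}}$ of $\phi^{\flat}$, and since both $\sigma_i$ have the \emph{same} $\Spin_7(\C)$-valued parameter $\iota \circ \phi$ (by construction of $\mathcal{L}$), they land in the Xu packet attached to that single lift. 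A secondary, more routine obstacle is establishing that $\theta : \Irr^{\heartsuit}(G_2) \to \Irr(\PGSp)$ preserves genericity and that the unique generic member of a $G_2(\C)$-type discrete $L$-packet for $\Sp$ is correctly identified; both of these are expected from \cite{GS}, \cite{SW} and Arthur's theory, so I would cite rather than reprove them.
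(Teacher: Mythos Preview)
Your argument has a genuine gap at step (4), and it is precisely the ``main obstacle'' you flag but do not actually resolve. You assert that since $\mathcal{L}(\pi_1)=\mathcal{L}(\pi_2)=\phi$, the lifts $\sigma_i=\theta(\pi_i)$ lie in the \emph{same} Xu packet, namely the one ``attached to the lift $\iota\circ\phi\in\tilde{\Phi}_{\phi^{\flat}}$''. But no such attachment exists at this stage. The construction of $\mathcal{L}$ on $\Irr^{\diamondsuit}(G_2)$ only records the $\SO_7$-parameter $\phi^{\flat}$ of ${\rm rest}(\sigma_i)$ together with its (unique) factorization through $G_2(\C)$; it does \emph{not} assign a $\Spin_7$-valued parameter to $\sigma_i$, nor does it place $\sigma_i$ in any particular Xu packet. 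Xu's results (property (e) in \S\ref{S:Xu}) only say that the set of Xu packets in $\tilde{\Pi}_{\phi^{\flat}}$ and the set $\tilde{\Phi}_{\phi^{\flat}}$ are isomorphic as $\Hom(F^{\times},\mu_2)$-torsors; they do \emph{not} furnish a canonical bijection. Indeed, the paper singles out a distinguished Xu packet only in \S\ref{SS:distin}, \emph{after} and \emph{using} Proposition \ref{P:unique}. So your step (4) is circular.

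The paper breaks the quadratic ambiguity by a different mechanism: the Kret-Shin parameter. From $\sigma_1,\sigma_2$ generic with the same $\Sp$-restriction one gets $\sigma_2\cong\sigma_1\otimes\chi$ for some quadratic $\chi$ with $\sigma_1\not\cong\sigma_1\otimes\chi$. Proposition \ref{P:KS2} (proved by a global argument using the spin lifting of \S\ref{S:triality} and the Galois representations of Kret--Shin) shows that for $\sigma=\theta(\pi)$ with $\pi$ generic discrete series on $G_2$, the Kret-Shin parameter of $\sigma$ is \emph{uniquely} $\iota\circ\mathcal{L}(\pi)$. Applying this to both $\sigma_1$ and $\sigma_2$, and using the twisting compatibility $\spin_*(\sigma\otimes\chi)=\spin_*(\sigma)\otimes\chi$ (Proposition \ref{P:key}(ii)), one finds that $\chi$ stabilizes $\iota\circ\phi\in\tilde{\Phi}_{\phi^{\flat}}$ while not stabilizing $\sigma_1\in\tilde{\Pi}_{\phi^{\flat}}$, contradicting Xu's property (e). The point is that the Kret-Shin parameter is an invariant of $\sigma$ defined \emph{independently} of Xu's partition, and its uniqueness for theta lifts from $G_2$ is what pins down the twist; your proposal lacks any such independent invariant.
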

 \vskip 5pt
 
 \begin{proof}
 Since the existence part has been shown in Proposition \ref{P:cuspidal_existence}, the main issue here is the uniqueness.
 Suppose, for the sake of contradiction,  that $\pi$ and $\pi'$ are two distinct generic representations of $G_2$ such that $\mathcal{L}(\pi) = \mathcal{L}(\pi') = \phi$. 
 Then $\pi$ and $\pi'$ belongs to $\Irr_{ds}^{\diamondsuit}(G_2)$ and we set
 \[  \sigma = \theta(\pi) \quad \text{and} \quad \sigma' = \theta(\pi'). \]
 Both $\sigma$ and $\sigma'$ are distinct generic discrete series representations of $\PGSp$ (by Howe duality) belonging to $\tilde{\Pi}_{\phi^{\flat}}$ (see (\ref{E:flat})).
 Hence, there is a quadratic character $\chi$ such that $\sigma' \cong \sigma \otimes \chi \ncong \sigma$. 
\vskip 5pt

Now by Proposition \ref{P:KS2}, $\sigma$   has (unique) Kret-Shin parameter $\iota \circ \phi$  which is an element of $\tilde{\Phi}_{\phi^{\flat}}$ (see (\ref{E:flat2}). 
On the other hand, 
\vskip 5pt

\begin{itemize}
\item by Proposition \ref{P:key}(ii), since $\sigma' \cong \sigma \otimes \chi$, $\sigma'$ has Kret-Shin parameter $(\iota \circ \phi) \otimes \chi$;
\item by Proposition \ref{P:KS2}, $\sigma' = \theta(\pi')$ has Kret-Shin parameter $\iota\circ \phi$.
\end{itemize}
By the uniqueness part of Proposition \ref{P:KS2}, we have
\[  \iota \circ \phi =   (\iota \circ \phi) \otimes \chi \in \tilde{\Phi}_{\phi^{\flat}}. \]
 Hence  the quadratic character $\chi$ stabilizes $\iota \circ \phi \in \tilde{\Phi}_{\phi^{\flat}}$ but does not stabilize $\sigma \in \tilde{\Pi}_{\phi^{\flat}}$. 
 This contradicts statement (e) of \S \ref{S:Xu}.
 \end{proof}
 \vskip 5pt
 
 \subsection{\bf A distinguished Xu's packet}   \label{SS:distin}
 In view of Proposition \ref{P:unique}, we can now pick out a distinguished Xu's packet contained in $\tilde{\Pi}_{\phi^{\flat}}$. Namely, we set
 \[  \tilde{\Pi}^{X\ast}_{\phi^{\flat}}  := \text{the Xu's packet containing the unique generic representation $\theta(\pi)$ with $\mathcal{L}(\pi) = \phi$}. \]
 Another way to say this is:
 \[  \tilde{\Pi}^{X\ast}_{\phi^{\flat}}  := \text{the Xu's packet containing the unique generic $\sigma \in \tilde{\Pi}_{\phi^{\flat}}$ with nonzero theta lift to $G_2$}. \]
 We shall show:

\vskip 5pt

\begin{prop} \label{P:fibers}
Let $\phi \in \Phi_{ds}^{\diamondsuit}(G_2)$.
The local theta correspondence defines a bijection 
\[  \mathcal{L}^{-1}(\phi) \longleftrightarrow  \tilde{\Pi}^{X\ast}_{\phi^{\flat}}. \]
\end{prop}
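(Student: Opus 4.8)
The plan is to show that the injection $\theta\colon\mathcal{L}^{-1}(\phi)\hookrightarrow\tilde{\Pi}_{\phi^{\flat}}$ furnished by Howe duality (Theorem~\ref{T:dichotomy}(i)) has image exactly the distinguished Xu packet $\tilde{\Pi}^{X\ast}_{\phi^{\flat}}$. By the construction of $\mathcal{L}$ this image equals $X:=\{\sigma\in\tilde{\Pi}_{\phi^{\flat}}:\theta_{G_2}(\sigma)\neq 0\}$: indeed $\mathrm{rest}(\theta(\pi))$ has $L$-parameter $\phi^{\flat}$ for $\pi\in\mathcal{L}^{-1}(\phi)$, and conversely any $\sigma\in X$ has $\theta_{G_2}(\sigma)\in\mathcal{L}^{-1}(\phi)$ since $\mathrm{rest}(\sigma)\in\Pi_{\phi^{\flat}}$ and $\mathrm{std}_*\circ\iota_*$ is injective. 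Moreover $X$ contains the generic $\sigma_0=\theta(\pi_0)$ attached to the unique generic member $\pi_0$ of $\mathcal{L}^{-1}(\phi)$ (Proposition~\ref{P:unique}), which by definition lies in $\tilde{\Pi}^{X\ast}_{\phi^{\flat}}$. So everything reduces to the equality $X=\tilde{\Pi}^{X\ast}_{\phi^{\flat}}$. I would also keep at hand the identity $\#\tilde{\Pi}^{X\ast}_{\phi^{\flat}}=\#\Irr(S_{\iota\circ\phi}/Z(\Spin_7))=\#\Irr(S_\phi)=|S_\phi|$, coming from property~(c) of \S\ref{S:Xu}, Lemma~\ref{L:para}(ii), and the fact that $S_\phi$ is abelian for $\phi\in\Phi^{\diamondsuit}_{ds}(G_2)$ (the non-abelian option $S_3$ occurs only in $\Phi^{\spadesuit\clubsuit}_{ds}(G_2)$).

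First I would prove $X\subseteq\tilde{\Pi}^{X\ast}_{\phi^{\flat}}$. Given $\sigma\in X$, set $\pi=\theta_{G_2}^{-1}(\sigma)\in\mathcal{L}^{-1}(\phi)$ and globalize $\pi$, exactly as in the proof of Proposition~\ref{P:well} (using the totally definite octonionic form of Appendix~A), to a cuspidal automorphic representation $\Pi$ with $\Pi_w\cong\pi$, a Steinberg component at some finite place $u\neq w$, sufficiently regular infinitesimal character at every archimedean place, and nonzero \emph{cuspidal} global theta lift $\Sigma=\theta(\Pi)$ to $\PGSp$. Then $\mathrm{rest}(\Sigma)$ transfers to a regular algebraic cuspidal representation $\mathcal{A}(\mathrm{rest}(\Sigma))$ of $\GL_7$ (Arthur, thanks to the Steinberg place), whose attached Galois representation $\rho$ factors through $G_2(\overline{\Q}_l)$ by Chenevier \cite[Thm.~E]{C}; local--global compatibility identifies $\rho|_w$ with the $L$-parameter of $\mathrm{rest}(\sigma)$, namely $\phi^{\flat}$, and injectivity of $\mathrm{std}_*\circ\iota_*$ (Lemma~\ref{L:para}(iii)) then forces $\rho|_w=\phi$ as a $G_2(\C)$-valued parameter, so that the $\Spin_7(\C)$-lift of $\phi^{\flat}$ realized globally by $\Sigma$ at $w$ is $\iota\circ\phi$. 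Feeding this into Xu's global multiplicity formula (property~(f) of \S\ref{S:Xu}), together with property~(e) and the rigidity result \cite[Lemma~4.6]{HKT} (based on \cite{Gr95}), I conclude that $\Sigma_w=\sigma$ lies in the same Xu packet as $\sigma_0$, whose Kret--Shin parameter is $\iota\circ\phi$ by Proposition~\ref{P:KS2}; that is, $\sigma\in\tilde{\Pi}^{X\ast}_{\phi^{\flat}}$.

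It then remains to prove surjectivity: every $\sigma\in\tilde{\Pi}^{X\ast}_{\phi^{\flat}}$ should have nonzero theta lift to $G_2$, equivalently (via property~(b) of \S\ref{S:Xu}, that $\mathrm{rest}$ is a bijection $\tilde{\Pi}^{X\ast}_{\phi^{\flat}}\xrightarrow{\sim}\Pi_{\phi^{\flat}}/_{\PGSp}$) that $\mathrm{rest}\circ\theta\colon\mathcal{L}^{-1}(\phi)\to\Pi_{\phi^{\flat}}/_{\PGSp}$ is onto. For the generic member this is Proposition~\ref{P:cuspidal_existence}, and for the non-supercuspidal members one reads it off the explicit determination of the theta correspondence in \cite{GS}. \emph{The hard part is the non-generic supercuspidal members of the distinguished packet.} For these I would globalize $\sigma$ (via Appendix~A) to a cuspidal $\Sigma$ lying in the distinguished global Arthur--Xu packet, with auxiliary local components in sufficiently general position, and then show that the global theta lift $\theta_{G_2}(\Sigma)$ is nonzero and cuspidal, whence $\theta_{G_2}(\sigma)=\theta_{G_2}(\Sigma_w)\neq 0$. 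The genuine difficulty, and the place where real work is needed, is establishing this global nonvanishing without circularity: a Rallis-type inner product formula would involve the local theta lift at $w$, which is exactly what is at issue, so one must instead either arrange $\Sigma$ to be itself a global theta lift from $G_2$ with prescribed $w$-component, or argue by a direct analysis of the minimal representation of $E_7$ restricted to $G_2\times\PGSp$ along the lines of \cite{GS, SW}. Once surjectivity is in hand, $\theta$ becomes a bijection $\mathcal{L}^{-1}(\phi)\xrightarrow{\sim}\tilde{\Pi}^{X\ast}_{\phi^{\flat}}$; alternatively, one deduces $X=\tilde{\Pi}^{X\ast}_{\phi^{\flat}}$ from Step~1 by the cardinality count $\#\mathcal{L}^{-1}(\phi)=\#\tilde{\Pi}^{X\ast}_{\phi^{\flat}}=|S_\phi|$, the former obtained by matching the component-group labellings on the two sides through $\iota_*\colon S_\phi\xrightarrow{\sim}S_{\iota\circ\phi}/Z(\Spin_7)$ (Lemma~\ref{L:para}(ii)).
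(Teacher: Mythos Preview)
Your overall framing is right: one must show that the set $X=\{\sigma\in\tilde{\Pi}_{\phi^{\flat}}:\theta_{G_2}(\sigma)\neq 0\}$ coincides with the distinguished Xu packet $\tilde{\Pi}^{X\ast}_{\phi^{\flat}}$. However, both of your directions have genuine gaps, and the key lemma that cuts through them is missing from your proposal.

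\textbf{On Step 1.} Your argument does not actually establish that $\sigma$ lies in the \emph{same} Xu packet as $\sigma_0$. You correctly extract that the local parameter of ${\rm rest}(\Sigma_w)$ is $\phi^{\flat}$ and, via Chenevier, that it comes from $G_2(\C)$ as $\iota\circ\phi$. But Xu's local packets are defined purely representation-theoretically; the entire point of \S\ref{S:Xu} is that no canonical matching between Xu packets and lifts in $\tilde{\Phi}_{\phi^{\flat}}$ is available. Knowing that $\sigma$ and $\sigma_0$ share the Kret--Shin parameter $\iota\circ\phi$ does not, by itself, force them into the same Xu packet: property~(e) only identifies stabilizers, and \cite[Lemma~4.6]{HKT} is a statement about parameters, not about Xu packets. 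The paper avoids this difficulty entirely: it deduces $X\subset\tilde{\Pi}^{X\ast}_{\phi^{\flat}}$ from the ``one in, all in'' principle (Lemma~\ref{L:allin}) together with Proposition~\ref{P:unique}. Indeed, if some $\sigma\in X$ lay in a non-distinguished Xu packet $\tilde{\Pi}^X$, then by Lemma~\ref{L:allin} every member of $\tilde{\Pi}^X$ would lie in $X$, in particular its generic member, contradicting the uniqueness of the generic element in $\mathcal{L}^{-1}(\phi)$.

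\textbf{On Step 2.} You correctly flag this as the crux, but you do not solve it, and your ``alternative'' via a cardinality count is circular: the equality $\#\mathcal{L}^{-1}(\phi)=|S_\phi|$ is the content of (vii), which is \emph{deduced from} Proposition~\ref{P:fibers} in \S8.3, not an input to it. The missing idea is again Lemma~\ref{L:allin}, and the missing tool for its proof is the exceptional Siegel--Weil formula of \cite{GS_compositio}. Starting from the generic $\sigma_0=\theta(\pi_0)$, one globalizes $\pi_0$ at \emph{two} places $w_1,w_2$ (so $\Sigma_{w_1}\cong\Sigma_{w_2}\cong\sigma_0$), then for any other $\sigma'\in\tilde{\Pi}^{X\ast}_{\phi^{\flat}}$ one uses Xu's Arthur multiplicity formula to produce a cuspidal $\Sigma'$ agreeing with $\Sigma$ away from $w_1,w_2$ and with $\Sigma'_{w_1}\cong\Sigma'_{w_2}\cong\sigma'$. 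Then
\[
L^S(s,\Sigma',\spin)=L^S(s,\Sigma,\spin)=\zeta^S(s)\cdot L^S(s,\Sigma,{\rm std}),
\]
which has a pole at $s=1$, and by \cite[Thm.~1.1]{GS_compositio} this forces $\Sigma'$ to have nonzero global theta lift to a form of $G_2$; specializing at $w_1$ gives $\theta_{G_2}(\sigma')\neq 0$. This is precisely the global nonvanishing you were seeking, and it sidesteps the circularity you worried about because the analytic criterion (pole of the Spin $L$-function) replaces any Rallis-type formula involving the local lift at $w$.
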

\vskip 5pt

\subsection{\bf Parametrization of fiber of $\mathcal{L}$}
Let us assume this proposition for a moment and explain how it leads to a proof of (vii) of the Main Theorem.
It was shown in \cite[Section 4, Prop. 1.10]{GrS2} that the inclusion $\iota$ gives rise to an isomorphism
\[  S_{\phi}  \cong S_{\iota\circ \phi}/ Z(\Spin_7), \]
and thus a bijection
\[  \Irr (S_{\iota\circ \phi}/ Z(\Spin_7)) \cong \Irr (S_{\phi}). \]
On the other hand, statement (c) of \S \ref{S:Xu} gives a bijection
\[  \tilde{\Pi}^{X\ast}_{\phi^{\flat}} \longleftrightarrow  \Irr (S_{\iota\circ \phi}/ Z(\Spin_7)). \]
Combining this with Proposition \ref{P:fibers}, we  obtain a bijection
\[   \mathcal{L}^{-1}(\phi) \longleftrightarrow   \tilde{\Pi}^{X\ast}_{\phi^{\flat}} \longleftrightarrow  \Irr (S_{\iota\circ \phi}/ Z(\Spin_7)) \longleftrightarrow \Irr (S_{\phi}). \]
This finishes the proof of  (vii) of the Main Theorem.
   
   \vskip  5pt 

\subsection{\bf One in, all in} 
It remains to prove Proposition \ref{P:fibers}. The following is the key lemma:

\vskip 5pt

\begin{lemma}  \label{L:allin}
Let $\tilde{\Pi}^X_{\phi^{\flat}} \subset \tilde{\Pi}_{\phi^{\flat}}$ be any Xu's packet. Then either all elements of $\tilde{\Pi}^X_{\phi^{\flat}}$ have nonzero theta lift to $G_2$ or  none of them has.
\end{lemma}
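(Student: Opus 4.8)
The plan is to exploit the fact, established earlier, that twisting by a quadratic character $\chi$ of $F^\times$ (viewed as a character of $\PGSp$ via the similitude factor) permutes the members of $\tilde{\Pi}_{\phi^{\flat}}$, and that by statement (a) of \S\ref{S:Xu} it carries one Xu's packet to another Xu's packet. So the first step is to record the compatibility of theta lifting with quadratic twisting on the $\PGSp$ side: if $\sigma \in \Irr(\PGSp)$ has nonzero theta lift to $G_2$, I want to understand what happens to $\sigma \otimes \chi$. The key point is that the quadratic character $\chi$ of $F^\times$, composed with the similitude character of $\PGSp$, corresponds on the $G_2$ side to \emph{nothing} — $G_2$ has trivial abelianization and no nontrivial characters — so one cannot simply twist $\theta(\sigma)$ by $\chi$. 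Instead, the mechanism must be that $\chi$ acts on the theta correspondence for the dual pair $G_2 \times \PGSp \subset E_7$ in a way that permutes which representations of $\PGSp$ participate, while the $G_2$-side is essentially fixed; concretely, twisting by $\chi$ should interchange theta lifting to $\PGSp$ (in the $E_7$ picture) with theta lifting to $\PGSp$ composed with an automorphism, or relate it to the other members of the (restricted-to-$\Sp$) packet.

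The cleanest route, which I would pursue, is to go through the Spin/Kret--Shin machinery rather than trying to manipulate the theta correspondence directly. Suppose $\sigma \in \tilde{\Pi}^X_{\phi^{\flat}}$ has nonzero theta lift $\pi$ to $G_2$. By Proposition~\ref{P:KS2}, $\sigma$ has a \emph{unique} Kret--Shin parameter, namely $\iota \circ \mathcal{L}(\pi)$, which takes values in $G_2(\C) \subset \Spin_7(\C)$. I claim the defining property for $\sigma'\in \tilde{\Pi}_{\phi^{\flat}}$ to have nonzero theta lift to $G_2$ is precisely that its Kret--Shin parameter can be taken to lie in $G_2(\C)$, equivalently (by the computation in the proof of Proposition~\ref{P:cuspidal_existence}) that $L(0,\sigma',\mathrm{spin}) = \infty$, i.e. $\spin_*(\sigma')$ has a pole at $s=0$, i.e. $\spin\circ\phi_{\sigma'}$ contains the trivial character. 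Now the point is that this condition only depends on the Xu's packet of $\sigma'$, not on $\sigma'$ itself: all members of a Xu's packet $\tilde{\Pi}^X_{\phi^{\flat}}$ restrict to the same packet $\Pi_{\phi^{\flat}}$ of $\Sp$ and, by the local character identities of endoscopy (statement (d) of \S\ref{S:Xu}) together with statement (c), they share the same L-parameter lift $\phi \in \tilde{\Phi}_{\phi^{\flat}}$ up to the canonical identification; hence they all have the same $\spin_*$-lift $\spin\circ\phi$, and so the condition ``$\spin\circ\phi$ contains the trivial character'' is a property of the packet. Therefore if one member of $\tilde{\Pi}^X_{\phi^{\flat}}$ has nonzero theta lift to $G_2$, the spin-L-function criterion holds for the whole packet, and then Proposition~4.6 and Theorem~5.3 of \cite{SW} (the nonvanishing criterion used in Proposition~\ref{P:cuspidal_existence}) force \emph{every} generic member to lift; finally, since theta lifting carries tempered to tempered and one knows the full packet structure via the dichotomy theorem, every member of $\tilde{\Pi}^X_{\phi^{\flat}}$ has nonzero theta lift.

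The main obstacle I anticipate is making precise the assertion that all members of a single Xu's packet carry the \emph{same} L-parameter lift $\phi \in \tilde{\Phi}_{\phi^{\flat}}$ — a priori Xu only gives a partition of $\tilde{\Pi}_{\phi^{\flat}}$ into packets together with an abstract bijection (c) with $\Irr(S_\phi/Z(\Spin_7))$ for \emph{some} chosen lift $\phi$, and the identification of which lift goes with which packet is exactly the ambiguity that Xu did not resolve. However, what I actually need is weaker: I only need that the \emph{spin L-factor at $s=0$} is constant on a Xu's packet. Since the two sets $\tilde\Phi_{\phi^\flat}$ and $\{$Xu's packets$\}$ are both torsors (by (e)) under $\Hom(F^\times,\mu_2) = \Hom(W_F,\mu_2)$ in a compatible way, and $\spin_*$ intertwines quadratic twisting on the two sides by Proposition~\ref{P:key}(ii), there is a well-defined equivariant assignment of a lift to each packet up to the common stabilizer, and the property ``$\spin\circ\phi \supset \mathbf{1}$'' is invariant under that stabilizer (it is stable under any quadratic twist fixing $\phi$). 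I would therefore argue at the level of spin L-factors and $\gamma$-factors throughout, using Proposition~\ref{P:key}(i) to reduce an identity of L-factors on the $\PGSp$ side to a statement about the (known) $\Sp$-packet $\Pi_{\phi^\flat}$ and its unique generic member, thereby sidestepping the need to pin down the packet-to-parameter bijection. If even this is delicate, a fallback is a global argument: globalize a member $\sigma$ of the Xu's packet to a globally generic cuspidal $\Sigma$ with a Steinberg place (as in Appendices A--B), transfer to $\GL_7$ and apply Chenevier's theorem \cite{C} to see the global parameter lands in $G_2$, then use the global theta lift to $G_2$ and the Arthur multiplicity formula (statement (f)) to detect all local members of the packet simultaneously.
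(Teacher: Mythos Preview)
Your main local route has a genuine gap: the tools you invoke --- Kret--Shin parameters (Proposition~\ref{P:KS}), the Langlands--Shahidi spin $\gamma$-factor identity (Proposition~\ref{P:key}), and the nonvanishing criterion of \cite[Prop.~4.6, Thm.~5.3]{SW} --- are all only available for \emph{generic} representations of $\PGSp$. A Xu's packet $\tilde{\Pi}^X_{\phi^{\flat}}$ contains exactly one generic member; the whole content of the lemma is to treat the \emph{non-generic} members $\sigma'$, for which there is no Langlands--Shahidi $L(s,\sigma',\spin)$ and no \cite{SW}-type criterion. Your final sentence (``one knows the full packet structure via the dichotomy theorem, every member \ldots has nonzero theta lift'') is not an argument: dichotomy concerns representations of $G_2$, not which representations of $\PGSp$ lie in the image of $\theta$.

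Your fallback global sketch is closer in spirit, but as stated it also runs into the genericity obstacle (you cannot globalize a non-generic $\sigma$ to a \emph{globally generic} $\Sigma$), and it omits the decisive input. The paper's proof proceeds globally but in a different order: it globalizes on the $G_2$ side to a cuspidal $\Pi$ (using the Appendix~A results, distinguishing the cases $\pi$ generic vs.\ non-generic supercuspidal), and takes the global theta lift $\Sigma=\theta(\Pi)$ on $\PGSp$. Then, for any other $\sigma'$ in the same local Xu packet, Xu's global Arthur multiplicity formula produces a cuspidal $\Sigma'$ agreeing with $\Sigma$ away from the chosen places and with $\Sigma'_{w_1}\cong\sigma'$. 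Since $\Sigma'$ and $\Sigma$ are nearly equivalent, the \emph{partial} Spin $L$-function $L^S(s,\Sigma',\spin)=L^S(s,\Sigma,\spin)=\zeta^S(s)\,L^S(s,\Sigma,\std)$ has a pole at $s=1$. The key step --- absent from your outline --- is \cite[Thm.~1.1]{GS_compositio} (the exceptional Siegel--Weil / regularized period result), which converts this pole into nonvanishing of the global theta lift of $\Sigma'$ to a form of $G_2$; specializing at $w_1$ gives $\theta(\sigma')\ne 0$ locally. No local $L$-value at $s=0$ or \cite{SW} criterion is used for $\sigma'$.
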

\vskip 5pt

\begin{proof}
It suffices to show that if $\sigma \in \tilde{\Pi}^X_{\phi^{\flat}}$ has nonzero theta lift to $G_2$, then so does any other $\sigma' \in \tilde{\Pi}^X_{\phi^{\flat}}$.
This will proceed by a global argument.

\vskip 5pt

Suppose that $\sigma = \theta(\pi)$ for $\pi \in \Irr^{\diamondsuit}_{ds}(G_2)$. Note that all non-supercuspidal representations in $\Irr^{\diamondsuit}_{ds}(G_2)$ are generic (see \cite[Thm. 15.3]{GS}). 
We shall globalize $\pi$ to a cuspidal representation $\Pi$ in one of the following two ways, depending on the type of $\pi$. 
\vskip 5pt
\begin{itemize}
\item[(i)] Suppose that $\pi$ is generic (this includes all non-supercuspidal $\pi$). Let $k$ be a totally real  number field with two places $w_1$ and $w_2$ such that $k_{w_1}\cong k_{w_2}\cong F$.  Let $\mathbb O$ be the split octonion algebra over $k$, and let $G=\Aut(\mathbb O)$. 
Fix an additional place $w$ and a 
generic cuspidal representation $\delta$ of $G(k_w)$ such that $\theta(\delta)$ is a generic cuspidal representation of $H(k_w)$.   
(Using \cite{G} there are such representations $\delta$ of depth 0.)  By Proposition  \ref{P:period_global_2} and Lemma \ref{L:isolation},  one can find a globally generic cuspidal automorphic representation $\Pi=\otimes_v \Pi_v$ of $G$ such that
\vskip 5pt

\begin{itemize} 
\item $\Pi_{\infty}$ is a discrete series representation, with the infinitesimal character sufficiently away from walls, for every archimedean place $\infty$. 
\item $\Pi_{w_1}\cong \Pi_{w_2}\cong \pi$     
\item $\Pi_w = \delta$;
\end{itemize} 
\vskip 5pt
Then the global theta lift  of $\Pi$ to $\PGSp$ is a globally generic cuspidal representation $\Sigma$  satisfying 
\vskip 5pt

\begin{itemize} 
\item $\Sigma_{\infty}$ is a discrete series representation for every archimedean place $\infty$. 
\item $\Sigma_{w_1}\cong \Sigma_{w_2}\cong \sigma = \theta(\pi)$. 
\item the restriction of $\Sigma$ to $\Sp$ is attached to  a  generic A-parameter. 
 \end{itemize} 
\vskip 5pt
Here, the first item  holds by the matching of infinitesimal characters \cite{HPS} and the fact that, at infinitesimal characters of discrete series sufficiently away from walls, only discrete series  representations are unitary. The third item  is assured by the first.

\vskip 5pt

\item[(ii)]  Suppose that $\pi$ is supercuspidal. Let $k$ be a totally real number field as in (i).  Let $\mathbb O$ be the totally definite  octonion algebra over $k$, and let $G=\Aut(\mathbb O)$.  By Proposition  \ref{P:period_global} and Corollary \ref{C:cuspidal}, one can find a cuspidal automorphic representation $\Pi$ such that:
\vskip 5pt
\begin{itemize}
\item $\Pi_{w_1} \cong \Pi_{w_2} \cong \pi$;
\item $\Pi_u$ is the Steinberg representation for some other finite place $u$;
\item $\Pi$ has nonzero cuspidal global theta lift $\Sigma$ on $\PGSp$.
\end{itemize}
 \vskip 5pt
 
  Thus, $\Sigma$ is a nonzero cuspidal automorphic representation of $\PGSp$ satisfying:
\vskip 5pt
\begin{itemize}
\item  $\Sigma_{w_1}\cong \Sigma_{w_2}\cong \sigma = \theta(\pi)$;
\item $\Sigma_u$ is the Steinberg representation;
\item the restriction of $\Sigma$ to $\Sp$  is attached to a generic A-parameter. 
\end{itemize}
 \end{itemize}
 Here the last item is a consequence of the second.
 \vskip 5pt 
Now let $\sigma'$ be a representation in $\tilde{\Pi}^X_{\phi^{\flat}}$. Then by the Arthur multiplicity formula for $\PGSp$ established by Xu, there exists a cuspidal
automorphic representation $\Sigma'$ satisfying:
\vskip 5pt

\begin{itemize}
\item   $\Sigma'_v \cong \Sigma_v$ for all $v \ne w_1$ or $w_2$;
\item $\Sigma'_{w_1} \cong \Sigma'_{w_2} \cong \sigma'$.
\end{itemize}
 In addition,  one has an equality of partial Spin L-functions:
 \[  L^S(s, \Sigma', \spin) = L^S(s, \Sigma, \spin) = \zeta^S(s) \cdot L^S(s, \Sigma, {\rm std}). 
 \]
 In particular, since $L^S(s, \Sigma, {\rm std})$ is nonzero at $s=1$ (as ${\rm rest}(\Sigma)$ is associated with a generic A-parameter), we see that $L^S(s, \Sigma', \spin)$ has a pole at $s=1$.   By \cite[Thm. 1.1]{GS_compositio},  $\Sigma'$ has nonzero global  theta lift 
to a form of $G_2$ over $k$. Specializing to the place $w_1$, we see that $\sigma'$ has nonzero local theta lift to the split $G_2$, as desired. This concludes the proof.
 \end{proof}
 \vskip 10pt
 
\subsection{\bf Proof of Proposition \ref{P:fibers} } 
Using Lemma \ref{L:allin}, we can now give the proof of Proposition \ref{P:fibers}. Lemma \ref{L:allin} implies that all members of the distinguished Xu's packet $\tilde{\Pi}^{X \ast}_{\phi^{\flat}}$ has nonzero theta lift to $G_2$ (since the generic member does). On the other hand, for any other Xu's packet contained in $\tilde{\Pi}_{\phi^{\flat}}$, the generic member does not participate in theta correspondence with $G_2$ by Proposition \ref{P:unique}. Hence, Lemma \ref{L:allin} implies that none of the members does. This proves Proposition \ref{P:fibers}.

\vskip 10pt

 \section{\bf Applications to Theta Correspondence}  \label{S:app}
 The starting point of this paper is the Howe duality and theta dichotomy result of \cite{GS} for the dual pairs
  \[
\xymatrix@R=2pt{
&&{\rm PGSp}_6\\
&G_2 \ar@{-}[ru]\ar@{-}[ld]\ar@{-}[rd]&\\
{\rm PD^{\times}}&&{\rm PGL_3\rtimes \bbZ/2\bbZ}
}
\]
 The investigation of these dual pair correspondences were begun by Gross and the second author \cite{GrS1, GrS2}  in the 1990's, and precise conjectures  for the behaviour of these theta correspondences were formulated in \cite{GrS1, GrS2} in terms of the conjectural LLC for the groups involved. Our construction of the (refined) LLC for $G_2$ in this paper is to a large extent guided by these conjectures. With the results established in this paper, we can turn the table around and establish these conjectures from \cite{GrS1, GrS2}. 
 \vskip 5pt
 
 For this, we first need to establish a partial LLC for $\PGSp$. Namely, suppose that $\phi \in \Phi^{\diamondsuit}_{ds}(G_2)$  and we set 
 \[  \phi^{\flat} = {\rm std} \circ \iota \circ \phi \in \Phi_{ds}(\Sp). \]
 We have noted in \S \ref{S:Xu} that the two sets
   \[  \{  \text{Xu's packets} \subset \tilde{\Pi}_{\phi^{\flat}}\}   \quad \text{and} \quad \tilde{\Phi}_{\phi^{\flat}} \]
 are noncanonically isomorphic as $\Hom(F^{\times}, \mu_2)$-sets.  However, we have seen in \S \ref{SS:distin} that one has a distinguished Xu's packet $\tilde{\Pi}^{X\ast}_{\phi^{\flat}}$ in the first set. Moreover, the second set contains the distinguished L-parameter $\iota \circ \phi$.  The existence of these distinguished base points gives rise to a canonical isomorphism of $\Hom(F^{\times} ,\mu_2)$-sets.
 \vskip 5pt
 
   In other words, we define the L-parameter of the distinguished Xu's packet $\tilde{\Pi}^{X\ast}_{\phi^{\flat}}$ to be $\iota \circ \phi$.
  This definition is not really ad hoc. Indeed, the distinguished Xu's packet $\tilde{\Pi}^{X\ast}_{\phi^{\flat}}$ is the unique one in  $ \tilde{\Phi}_{\phi^{\flat}}$ for which the Langlands-Shahidi Spin L-function of  its unique generic member is equal to the local L-factor of $\spin \circ \iota \circ \phi$ (and hence has a pole at $s = 0$).  
 Further,   by extending the spin lifting constructed in \S \ref{S:triality} and  using the results of Kret-Shin recalled in \S \ref{SS:KS}, we construct in Appendix C below  a weak LLC for $\PGSp$, of which this particular L-packet is an instance. 
  \vskip 5pt

  In any case,  with this definition of the L-packet of  $\PGSp$ associated to $\iota \circ \phi$, and the results of \cite{GS} as well as the LLC of $G_2$ established in this paper, we have:
  \vskip 5pt
  
  \begin{thm}
  (i) Conjecture 3.1 in \cite{GrS1} holds when $p \ne 3$.
  \vskip 5pt
  
  (ii) Conjecture 1.13 (as well as Conjectures 2.3 and 2.5) in \cite[Section 4]{GrS2} hold. 
  \end{thm}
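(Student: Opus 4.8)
The plan is to verify each of these conjectures by matching its statement, clause by clause, against results already in hand: the construction of $\mathcal{L}$ in \S\ref{S:definition} and part (vii) of the Main Theorem, the explicit theta computations of \cite{GS}, the structural statements \ref{T:dichotomy} and \ref{L:para}, and Propositions \ref{P:key}, \ref{P:KS2}, \ref{P:cuspidal_existence}, \ref{P:fibers} and \ref{P:unique}. The natural organizing principle is the trichotomy
\[  \Irr(G_2) = \Irr^{\spadesuit}(G_2) \sqcup \Irr^{\clubsuit}(G_2) \sqcup \Irr^{\diamondsuit}(G_2), \]
since on each piece the conjectures of \cite{GrS1, GrS2} predict essentially three things: (a) that the L-parameter of the relevant theta lift of $\pi$ is the composite of $\mathcal{L}(\pi)$ with the appropriate map of dual groups ($\iota'$, $\iota$, or ${\rm std}\circ\iota$); (b) that the internal parametrizations of the two L-packets on the sides of the correspondence are intertwined by the component-group isomorphisms recorded in \ref{L:para}; and (c) non-vanishing and image assertions governed by poles of the pertinent $L$-factors. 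I would take these three clauses up in turn on each piece.

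On $\Irr^{\spadesuit}(G_2)\sqcup\Irr^{\clubsuit}(G_2)$ --- the part of \cite{GrS1, GrS2} concerning the dual pairs $\PD^{\times}\times G_2$ and $(\PGL_3\rtimes\Z/2\Z)\times G_2$ in the relevant forms of $E_6$ --- clause (a) is simply the definition of $\mathcal{L}$ in \S\ref{S:definition} (Jacquet--Langlands transfer to $\PGL_3$, followed by $\iota':\SL_3(\C)\hookrightarrow G_2(\C)$), together with the reconciliation of the two definitions recorded there. Clause (b) I would read off from \ref{L:para}(i) (which gives $S_{\iota'_*\phi}\cong\mu_3$ or $S_3$ according to self-duality) together with the Proposition of \S\ref{S:definition} describing the fibres of $\mathcal{L}^{\spadesuit\clubsuit}$ over $\Phi^{\spadesuit\clubsuit}_{ds}(G_2)$, whose construction already passes through the Jacquet--Langlands transfers $JL_{D_{j(\eta)}}$ to the various cubic $F$-algebras indexed by the Kottwitz/invariant bijection $\Irr(S_{[\phi]})\cong\Z/3\Z$ --- exactly the bookkeeping predicted in \cite{GrS1, GrS2}. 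Clause (c) --- non-vanishing of $\theta_D$ and $\theta_{M_3}$ and the determination of their images --- is \ref{T:dichotomy}(iii)(a),(c) together with the pole characterizations in \ref{L:para}(i). The hypothesis $p\ne3$ enters here and only here: $\theta_D$ is known to be bijective, and the fibre parametrization over $\Phi^{\spadesuit\clubsuit}_{ds}(G_2)$ complete, only for $p\ne3$, cf. \ref{T:dichotomy}(iii)(a) and part (vii) of the Main Theorem; so whichever conjecture touches the $\spadesuit\clubsuit$-part of the picture carries this hypothesis.

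On $\Irr^{\diamondsuit}(G_2)$ --- the part of \cite[Section 4]{GrS2} concerning $G_2\times\PGSp$ in $E_7$ --- clause (a) is, for $\mathcal{L}$ itself, the definition via ${\rm rest}(\theta(\pi))$ and the LLC for $\Sp$, while the sharper claim that $\theta(\pi)$ has $\Spin_7(\C)$-valued parameter $\iota\circ\mathcal{L}(\pi)$ is Proposition \ref{P:KS2} for generic $\pi$, extended to all of $\Irr_{ds}^{\diamondsuit}(G_2)$ by the definition in \S\ref{SS:distin} of the distinguished Xu's packet $\tilde{\Pi}^{X\ast}_{\phi^{\flat}}$, the declaration in \S\ref{S:app} that its L-parameter is $\iota\circ\phi$, and Proposition \ref{P:fibers} identifying $\mathcal{L}^{-1}(\phi)$ with $\tilde{\Pi}^{X\ast}_{\phi^{\flat}}$ under $\theta$. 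Clause (b) follows by composing the bijections $\mathcal{L}^{-1}(\phi)\longleftrightarrow\tilde{\Pi}^{X\ast}_{\phi^{\flat}}\longleftrightarrow\Irr(S_{\iota\circ\phi}/Z(\Spin_7))$ (property (c) of \S\ref{S:Xu}) with the isomorphism $S_{\phi}\cong S_{\iota\circ\phi}/Z(\Spin_7)$ of \cite[Section 4, Prop. 1.10]{GrS2}, together with the base-point compatibility (generic member $\leftrightarrow$ trivial character, distinguished parameter $=\iota\circ\phi$) supplied by part (vii) of the Main Theorem, Proposition \ref{P:unique}, and Xu's properties (c),(e). Clause (c) --- non-vanishing of $\theta$, the characterizations of $\Irr^{\diamondsuit}(G_2)$ and $\Phi^{\diamondsuit}_{ds}(G_2)$, and any predicted identity of Spin $L$- or $\gamma$-factors --- is the theta dichotomy \ref{T:dichotomy}(ii),(iii)(b), refined by \ref{L:para}(ii), Proposition \ref{P:cuspidal_existence}, Proposition \ref{P:key}(i), and the relation $\spin\circ\iota\circ\phi = 1\oplus\phi^{\flat}$. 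No $p$-restriction intervenes in this chain, which is why the \cite[Section 4]{GrS2} conjectures hold for all $p$; any global clause in those conjectures would then follow from Xu's Arthur multiplicity formula for $\PGSp$ together with the local correspondence just obtained and the global theta results already invoked in \S\ref{S:definition} and the appendices.

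The step I expect to be the actual work --- though not a deep difficulty --- is the reconciliation of normalizations. The component-group pairings in \cite{GrS1, GrS2} were formulated relative to specific choices of Whittaker data, of the isomorphisms $i_{[\phi]}:\SL_3(\C)\cong C_{[\phi]}$, and of base lifts inside $\tilde{\Phi}_{\phi^{\flat}}$, and one must check that these coincide with the choices used in the present paper, in \cite{GS}, and --- through Arthur's conventions --- in \cite{Xu1, Xu2, Xu3}. Since modifying any such choice only twists by the outer automorphism of $\SL_3(\C)$ or by a quadratic character of $W_F$, operations under which the parametrizations are invariant by the discussion in \S\ref{S:definition} and by Proposition \ref{P:key}(ii), this amounts to a careful but mechanical comparison, and it is where I would concentrate the effort.
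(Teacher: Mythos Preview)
Your proposal is correct and aligns with the paper's approach. The paper itself does not give a detailed proof of this theorem: after setting up the partial LLC for $\PGSp$ (declaring the L-parameter of the distinguished Xu's packet $\tilde{\Pi}^{X\ast}_{\phi^{\flat}}$ to be $\iota\circ\phi$), it simply asserts that ``with this definition of the L-packet of $\PGSp$ associated to $\iota\circ\phi$, and the results of \cite{GS} as well as the LLC of $G_2$ established in this paper,'' the conjectures follow. Your trichotomy-based plan, organized around clauses (a)--(c) on each of $\Irr^{\spadesuit}$, $\Irr^{\clubsuit}$, $\Irr^{\diamondsuit}$, is exactly the verification the paper leaves implicit, and you have correctly identified all the needed inputs (in particular the distinguished Xu's packet and Proposition \ref{P:fibers} for the $\diamondsuit$-piece, and the fibre proposition for $\mathcal{L}^{\spadesuit\clubsuit}$ together with the $p\ne 3$ caveat for part (i)).
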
 
 Thus, in a sense, we have come full circle. 
 \vskip 10pt
 
 \section{\bf Appendix A: Some Globalization Results}  \label{S:globalization}
In this appendix, we record some globalization results that were used in the main body of the paper.
\vskip 10pt

\subsection{A local non-vanishing result} \label{SS:local}
Let $F$ be a $p$-adic field and $\bbO$ be an octonion algebra over $F$, and let 
 $G=\Aut(\bbO)$. Let $D$ be a quaternion algebra over $F$ and fix an embedding $D \hookrightarrow \mathbb{O}$, unique up to conjugation by $G$. 
 Let $G_D\subset G$ be the pointwise stabilizer of $D\subset \bbO$. 
   Then $G_D$ is isomorphic to the group of norm one elements in $D$.  
\vskip 5pt

Let  $H= \PGSp(F)$, and $P=MN$ the Siegel maximal parabolic subgroup. Let $\hat N$ be the unitary dual of the unipotent radical $N$.  
The group $M$ acts on $\hat N$ with open orbits parameterized by quaternion algebras $D$: the stabilizer in $M$ of an element in the orbit parameterized by
is isomorphic to $\Aut(D)$. 
\vskip 5pt

Recall that we have the dual pair $G \times H \subset E_7$ and
the minimal representation $\Pi$ of $E_7$ induces a local theta correspondence $\theta: \Irr^{\heartsuit}(G) \longrightarrow \Irr(H)$.
Now we note:
\vskip 5pt

\begin{lemma} \label{L:su_period} 
Let $\pi$ be an irreducible representation of $G$ such that $\theta(\pi)$ is a square integrable representation of $H$. Then there exists 
a quaternion algebra $D \subset \bbO$ such that  $\pi$ is a quotient of $\mathrm{ind}^G_{G_D}(1)$. 

\end{lemma}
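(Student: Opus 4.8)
The plan is to realize $\pi$ inside a twisted Jacquet module of the minimal representation $\Pi$ of $E_7$ along the Siegel unipotent radical $N$ of $H$, and to use the square-integrability of $\theta(\pi)$ to force the relevant character of $N$ to be nondegenerate. First, by Frobenius reciprocity it suffices to produce a quaternion subalgebra $B\subset\bbO$ with $\Hom_G(\mathrm{ind}^G_{G_B}(1),\pi)\neq 0$. Since $G$ and $N$ commute inside $E_7$, for any unitary character $\chi$ of $N$ the $(N,\chi)$-coinvariants $\Pi_{N,\chi}$ carry an action of $G\times\mathrm{Stab}_M(\chi)$; moreover forming $(N,\chi)$-coinvariants commutes with forming $G$-coinvariants, so $\Theta(\pi)_{N,\chi}=(\Pi_{N,\chi}\otimes\pi^\vee)_G$, and in particular $\Theta(\pi)_{N,\chi}\neq 0$ implies $\Hom_G(\Pi_{N,\chi},\pi)\neq 0$.

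The key local input — which I would extract from \cite{GrS1, GrS2, GS} or establish directly using a model of $\Pi$ adapted to $N$ — is that for $\chi=\chi_B$ lying in the open $M$-orbit on $\hat N$ attached to a quaternion algebra $B$ (so that $\mathrm{Stab}_M(\chi_B)\cong\Aut(B)$), the module $\Pi_{N,\chi_B}$, restricted to $G$, is isomorphic to a multiple of $\mathrm{ind}^G_{G_B}(1)$; more precisely $\Pi_{N,\chi_B}$ is $\mathrm{ind}^G_{G_B}(1)$ endowed with a compatible smooth $\Aut(B)$-action. Granting this, $\Hom_G(\Pi_{N,\chi_B},\pi)\neq 0$ forces $\pi$ to be a quotient of $\mathrm{ind}^G_{G_B}(1)$, since an irreducible quotient of a sum of copies of $\mathrm{ind}^G_{G_B}(1)$ is already a quotient of $\mathrm{ind}^G_{G_B}(1)$. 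It therefore remains only to arrange $\Theta(\pi)_{N,\chi_B}\neq 0$ for some $B$. For this I would use the filtration of $\Theta(\pi)$, as a module over the Siegel parabolic $P=MN$, indexed by the $M$-orbits on $\hat N$ (of Mackey/Kudla type): the graded pieces attached to the non-open (rank $\le 2$) orbits are parabolically induced from proper parabolic subgroups of $H$ out of theta lifts to smaller groups, so none of them can have the square-integrable representation $\theta(\pi)$ as a quotient, by Casselman's criterion. Since $\theta(\pi)$ is a quotient of $\Theta(\pi)$, it must receive a nonzero map from the part of the filtration supported on the union of open orbits; hence $\Theta(\pi)_{N,\chi_B}\neq 0$ for at least one $B$, and combined with the previous step this proves the lemma.

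The main obstacle is the explicit identification of $\Pi_{N,\chi_B}$ — that is, recognizing the $(N,\chi_B)$-twisted Jacquet module of the minimal representation of $E_7$ at a nondegenerate character of the Siegel unipotent of $\PGSp$ as a twist of $\mathrm{ind}^G_{G_B}(1)$, and tracking the residual $\Aut(B)$-action. This amounts to a careful analysis of $\Pi$ in a mixed model attached to $N$, equivalently of the $\mathfrak{e}_7$-nilpotent orbit that a rank-$3$ element of $\hat N$ meets. A secondary and more routine point is to make the $P$-filtration of $\Theta(\pi)$ precise and to verify that every degenerate-orbit constituent is parabolically induced from a proper parabolic of $H$, so that square-integrability of $\theta(\pi)$ genuinely forces nonvanishing on the open orbits.
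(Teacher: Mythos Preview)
Your overall architecture matches the paper's: reduce to showing $\theta(\pi)_{N,\chi_B}\neq 0$ for some open-orbit character $\chi_B$, then invoke the identification $\Pi_{N,\chi_B}\cong \mathrm{ind}^G_{G_B}(1)$ (which is exactly \cite[Section 5, Lemma 3.4]{GrS2}) and conclude since $\pi\otimes\theta(\pi)$ is a quotient of $\Pi$. On that structural level you are in agreement with the paper, and your remarks about the Jacquet-module identification are on point.

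Where you diverge is in the nonvanishing step. The paper does \emph{not} use a Kudla-type filtration or Casselman's criterion. Instead it argues analytically: a matrix coefficient $c_{v,u}$ of the square-integrable representation $\theta(\pi)$ restricts to a nonzero $L^1$ function on $N$, so its Fourier transform $\psi_N\mapsto\int_N c_{v,u}(n)\overline{\psi_N(n)}\,dn$ is a nonzero continuous function on $\hat N$, hence nonvanishing on the open (dense) locus; fixing $u$ and varying $v$ then exhibits a nonzero functional factoring through $\theta(\pi)_{N,\psi_N}$. This is a two-line argument once one accepts the (standard) integrability of discrete-series matrix coefficients along unipotent radicals.

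Your proposed alternative is not wrong in spirit, but as written it is imprecise at the crucial point. The filtration you invoke is a filtration of $\Theta(\pi)$ as a $P$-module, so its graded pieces are $P$-modules, not $H$-modules; the assertion that the degenerate-orbit pieces are ``parabolically induced from proper parabolic subgroups of $H$'' therefore does not literally make sense, and Casselman's criterion cannot be applied to them directly. One can likely salvage the argument by passing through $\Pi_N$ as a $G\times M$-module and using second adjointness, but this is more work than you suggest and certainly not ``routine''. The paper's matrix-coefficient argument bypasses all of this and gives the nonvanishing at an open-orbit character in one stroke; I would recommend replacing your filtration paragraph with it.
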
 
\begin{proof} Let $c_{v,u}(g) =\langle\theta(\pi)(g) v, u\rangle  $ be a matrix coefficient of $\theta(\pi)$.  For some choices of $u$ and $v$ in $\theta(\pi)$, the restriction of $c_{v,u}$ to $N$ is non-zero. 
Since $\theta(\pi)$ is square integrable, the restriction of $c_{v,u}$ to $N$ is integrable. Thus, for every 
$\psi_N\in \hat N$, the following integral is well defined
\[ 
\hat c_{u,v}(\psi_N) = \int_N  c_{v,u}(n)\cdot \overline{\psi_N(n)} \, dn. 
\] 
Furthermore,  $\hat c_{u,v}$ is a continuous function on $\hat N$ by standard Fourier analysis arguments. It follows that there exists $\psi_N$, in one of the open orbits, such that 
$\hat c_{u,v}(\psi_N) \neq 0$.  As we fix $u$ and vary $v$, $v\mapsto \hat c_{u,v}(\psi_N) $ is a linear functional on  $\theta(\pi)_{N, \psi_N}$. Thus  $\theta(\pi)_{N, \psi_N} \neq 0$.  
\vskip 5pt

Let $\Pi$ be the minimal representation of the group $E_7$, so that 
$\pi\otimes \theta(\pi)$ is its quotient. From \cite[Section 5, Lemma 3.4]{GrS2} (case (4) in the proof) we have 
\[ 
\Pi_{N, \psi_N}= \mathrm{ind}^G_{G_D}(1) 
\] 
where $D$ is such that $\psi_N$ belongs to the open orbit parameterized by $D$. The lemma follows. 
\end{proof} 

\begin{cor} \label{C:su_period} 
 Let $\pi$ be a supercuspidal representation  of $G$ such that $\theta(\pi)$ is a square integrable representation of $H$. Then there exists a quaternion algebra $D$ such that 
$\pi$ is a submodule of $\mathrm{ind}^G_{G_D}(1)$.  
\end{cor}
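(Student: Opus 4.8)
The plan is to obtain the Corollary from Lemma~\ref{L:su_period} by a soft categorical argument. Lemma~\ref{L:su_period} already produces a quaternion algebra $B\subset\bbO$ for which $\pi$ is a \emph{quotient} of $V:=\mathrm{ind}^G_{G_B}(1)$, say via a surjective $G$-map $f\colon V\twoheadrightarrow\pi$; the only remaining task is to promote ``quotient'' to ``submodule'', and this is exactly the step at which the extra hypothesis that $\pi$ be supercuspidal (rather than merely that $\theta(\pi)$ be square integrable) gets used.

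First I would record the standard fact that, since $G=\Aut(\bbO)$ is of type $G_2$ and hence has trivial centre, a supercuspidal representation $\pi$ of $G$ has compactly supported matrix coefficients and is therefore a projective object in the category of smooth representations of $G$ (concretely: fixing a vector, a covector and an open compact subgroup fixing both, the compactly supported matrix coefficient gives a convergent ``formal degree'' averaging operator splitting any surjection onto $\pi$). Granting this, projectivity of $\pi$ lifts $\mathrm{id}_\pi$ through $f$ to a $G$-equivariant section $s\colon\pi\to V$ with $f\circ s=\mathrm{id}_\pi$; since $s$ is then injective it exhibits $\pi$ as a subrepresentation of $V=\mathrm{ind}^G_{G_B}(1)$ --- in fact as a direct summand, $V=s(\pi)\oplus\ker f$ --- which is the assertion.

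I do not anticipate a real obstacle; the only point needing care is recognizing \emph{where} supercuspidality is essential, namely that a surjection onto a representation cannot in general be reversed into an embedding, and it is precisely the projectivity (equivalently injectivity) of supercuspidal representations of a centreless group that makes the reversal legitimate. A variant avoiding any appeal to projectivity is to start instead from a nonzero $G_B$-fixed vector $v_0\in\pi$ (the existence of which is what ``$\pi$ is a quotient of $\mathrm{ind}^G_{G_B}(1)$'' amounts to) and send $\widetilde v\mapsto\big(g\mapsto\langle v_0,\widetilde\pi(g)\widetilde v\rangle\big)$: supercuspidality makes these coefficients compactly supported modulo $G_B$, so this is a nonzero $G$-map $\widetilde\pi\hookrightarrow\mathrm{ind}^G_{G_B}(1)$; but it yields $\widetilde\pi$ in place of $\pi$ and so needs a small extra bookkeeping step, whereas the projectivity route delivers $\pi$ itself.
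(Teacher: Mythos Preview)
Your argument is correct and is exactly the paper's: the proof there reads in its entirety ``It follows from projectivity of $\pi$,'' and you have simply unpacked why a supercuspidal representation of a centreless $p$-adic group is projective and how projectivity converts the quotient map of Lemma~\ref{L:su_period} into a split injection.
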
 
\begin{proof} It follows from projectivity of $\pi$.  
\end{proof} 

\vskip 5pt

\begin{lemma} \label{L:su_period_St}
Let $\mathrm{St}$ be the Steinberg  representation of $G$. Let $D$ be a division quaternion algebra. Then 
$\mathrm{St}$  is a direct summand of $L^2(G_D \backslash G)$. 
\end{lemma}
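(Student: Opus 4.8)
The plan is to identify $\mathrm{St}$, the Steinberg representation of $G = \Aut(\mathbb{O})$ for the split octonion algebra $\mathbb{O}$ over $F$, as a concrete piece of the theta correspondence and then apply the (archimedean-free) machinery already assembled in this appendix. First I would recall that the Steinberg representation of the split $G_2$ has nonzero theta lift to $\PGSp$; indeed $\theta(\mathrm{St}_{G_2})$ is a square-integrable (in fact the Steinberg) representation of $H = \PGSp(F)$. This is exactly the hypothesis of Lemma \ref{L:su_period}, so that lemma, or more precisely Corollary \ref{C:su_period}, produces a quaternion algebra $B \subset \mathbb{O}$ such that $\mathrm{St}$ is a submodule of $\mathrm{ind}^G_{G_B}(1)$. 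The first real task is to pin down \emph{which} quaternion algebra $B$ occurs: I expect it to be the \emph{non-split} (division) quaternion algebra, and this should come from tracking which open $M$-orbit on $\hat N$ supports the twisted Jacquet module $\Theta(\mathrm{St}_{G_2})_{N,\psi_N}$ of the Steinberg representation of $\PGSp$, using the orbit-by-orbit description of the minimal representation's Jacquet modules in \cite[Section 5, Lemma 3.4]{GrS2}. The Steinberg representation of $\PGSp$ is, by Casselman's criterion, the one whose Jacquet modules along parabolics are the ``smallest'' possible, and correspondingly its $N$-spectrum should be concentrated on the anisotropic orbit, which is the one parametrized by the division quaternion algebra $B$.

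Once $B$ is identified as non-split, $G_B$ is the group of norm-one elements of a quaternion division algebra, hence an \emph{anisotropic} (compact mod center — here genuinely compact) group, so $G_B \backslash G$ carries a $G$-invariant measure and $\mathrm{ind}^G_{G_B}(1) = L^2(G_B \backslash G)$ as a smooth representation up to the usual completion, i.e. the compactly-induced model embeds with dense image in the $L^2$ model and their irreducible constituents coincide. The key point is that because $G_B$ is compact, the induction $\mathrm{ind}^G_{G_B}(1)$ is \emph{unitarizable} — it is a subrepresentation of $L^2(G_B\backslash G)$ — so any irreducible submodule of it, in particular $\mathrm{St}$, occurs as a \emph{direct summand} of $L^2(G_B\backslash G)$ rather than merely a subquotient. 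Concretely: $\mathrm{St}$ being square-integrable and a submodule of a unitary representation means the corresponding isotypic projection is bounded, giving the orthogonal decomposition $L^2(G_B\backslash G) = \mathrm{St} \oplus (\mathrm{St})^{\perp}$.

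So the steps, in order, are: (1) verify $\mathrm{St}_{G_2}$ has square-integrable theta lift $\mathrm{St}_H$ to $\PGSp$ — this is in \cite{GS} and was used already in \S\ref{SS:well}; (2) apply Corollary \ref{C:su_period} to get $\mathrm{St} \hookrightarrow \mathrm{ind}^G_{G_B}(1)$ for some $B$; (3) determine $B$ by a Jacquet-module / orbit computation, showing it is the non-split quaternion algebra — this uses \cite[Section 5, Lemma 3.4]{GrS2} together with Casselman's characterization of Steinberg via its Jacquet functors; (4) deduce, from compactness of $G_B$ and square-integrability of $\mathrm{St}$, that the embedding is split, i.e. $\mathrm{St}$ is a direct \emph{summand} of $L^2(G_B\backslash G)$. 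The main obstacle I anticipate is step (3): identifying the orbit, which amounts to computing the twisted Jacquet module of the Steinberg representation of $\PGSp$ along the Siegel radical $N$ restricted to each open $M$-orbit, and checking it is nonzero precisely for the anisotropic $\psi_N$. One clean way to see this is to note that $\mathrm{St}_H$ has a Whittaker model, so its top Jacquet module is generic, but the point here is subtler — we need the \emph{$N$}-coinvariants against $\psi_N$ in an open orbit, and one must rule out the split orbits; for the split orbit the stabilizer is a split $\mathrm{SL}_2$-type group which is noncompact, and one argues that $\mathrm{St}_H$, being square-integrable with a matrix coefficient whose restriction to $N$ is $L^1$, must have its $N$-Fourier support meet an anisotropic orbit (this is essentially the argument inside the proof of Lemma \ref{L:su_period}, refined to identify $B$). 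Alternatively one can sidestep the explicit orbit computation by invoking the known explicit description of $\theta(\mathrm{St}_{G_2})$ and the reciprocity between the $G_B$-period of $\mathrm{St}$ and the nonvanishing of a specific degenerate Whittaker functional, which \cite{GrS2} computes directly.
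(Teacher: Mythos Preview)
Your overall strategy is sound, but there are two issues worth flagging.

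First, a minor gap in step (2): Corollary \ref{C:su_period} requires $\pi$ to be supercuspidal (its proof invokes projectivity of $\pi$), and the Steinberg representation is not supercuspidal. You therefore only get $\mathrm{St}$ as a \emph{quotient} of $\mathrm{ind}^G_{G_B}(1)$ from Lemma \ref{L:su_period}, not a submodule. This is easily repaired once $B$ is known to be non-split: since $G_B$ is then compact open, $\mathrm{ind}^G_{G_B}(1)$ is itself a projective object in the category of smooth representations, so any quotient is a direct summand. (The paper instead passes through the chain $\Hom_G(\mathrm{ind}^G_{G_B}(1),\mathrm{St}) \cong \Hom_{G_B}(\mathrm{St},\mathbb{C})$ to extract a $G_B$-fixed vector and then uses the corresponding matrix coefficient to embed $\mathrm{St}$ into $L^2(G_B\backslash G)$.)

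The more substantive issue is step (3). You frame it as needing to \emph{determine} which $B$ arises, and you propose to argue that the $N$-spectrum of $\mathrm{St}_H$ is concentrated on the anisotropic orbit by ruling out the split one. This is both unnecessary and false: the restriction of the Steinberg representation of $H=\PGSp$ to any maximal unipotent subgroup is the \emph{regular} representation (see \cite[Prop.~5]{CS}), so $\theta(\mathrm{St})_{N,\psi_N}\neq 0$ for \emph{every} $\psi_N$, split or not. In particular it is nonzero for $\psi_N$ in the orbit parametrized by the non-split $B$, and one simply \emph{chooses} that orbit in the argument of Lemma \ref{L:su_period}. There is nothing to rule out; Lemma \ref{L:su_period} asserts existence of some $B$, not uniqueness, and for the Steinberg both choices of $B$ yield a quotient. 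Your proposed Casselman-style argument that the spectrum avoids the split orbit would not succeed.
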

\begin{proof} 
Recall that $\theta(\mathrm{St})$ is the Steinberg representation of $H$. By \cite[Prop. 5]{CS}, the restriction of the Steinberg representation of $H$ to any maximal unipotent subgroup of $H$  
 is the regular representation. It follows at once that $\theta(\mathrm{St})_{N,\psi_N} \neq 0$ for any character $\psi_N$ of $N$. Hence, arguing as in Lemma \ref{L:su_period}, $\mathrm{St}$ is a 
 quotient of $\mathrm{ind}^G_{G_D}(1)$ with $D$ non-split.  Since 
\[ 
0\neq  \Hom_G( \mathrm{ind}^G_{G_D}(1), \mathrm{St}) \cong  \Hom_G(\mathrm{St},  \mathrm{Ind}^G_{G_D}(1)) \cong  \Hom_{G_D}( \mathrm{St}, \mathbb C). 
\] 
and $G_D$ is anisotropic, there exists a non-zero vector in $\mathrm{St}$ fixed by $G_D$. The corresponding matrix coefficient gives us 
\[ \mathrm{St} \subset L^2(G_D \backslash G)\subset L^2(G).\]

\end{proof}

\vskip 5pt

We have a complementary result for the compact Lie group $G^c_2(\mathbb{R})$ which is the automorphism group of the octonion division $\mathbb{R}$-algebra.
\vskip 5pt

\begin{lemma} \label{L:real_period}
Let $\mathbb H$ be the quaternion division $\mathbb{R}$-algebra. For any irreducible (finite-dimensional) representation $\pi$ of $G^c_2(\mathbb{R})$, one has 
$\pi^{G_{\mathbb H}(\mathbb{R})} \ne 0$, so that 
\[  \pi \subset L^2(G_{\mathbb H}(\mathbb{R}) \backslash G^c_2(\mathbb{R})). \]
\end{lemma}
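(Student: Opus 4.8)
The plan is to realize $G_B$ concretely inside the compact group $K:=G_2(\mathbb{R})$ and then reduce the claim to a branching statement. Since $\pi^{G_B}\neq 0$ implies $\pi\hookrightarrow L^2(G_B\backslash K)$ by Frobenius reciprocity for compact groups (the matrix coefficients against a nonzero $G_B$-fixed vector land in $L^2(G_B\backslash K)$), it suffices to prove $\pi^{G_B}\neq 0$ for every $\pi\in\mathrm{Irr}(K)$. Recall that $G_B=\mathrm{Aut}(\mathbb{O}/B)\cong\mathrm{SU}(2)$ is the group of norm-one elements of the Hamilton quaternions $B$, acting on $\mathbb{O}=B\oplus B\ell$ by fixing $B$ pointwise and by left multiplication on $B\ell\cong\mathbb{H}$. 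Pick a nonzero imaginary element $i\in B$. Since $G_B$ fixes $i$, it lies in $K_i:=\mathrm{Stab}_K(i)$, which is the standard $\mathrm{SU}(3)\subset G_2$ acting on $\{i\}^\perp\cap\mathrm{Im}(\mathbb{O})\cong\mathbb{C}^3$ (the complex structure being left multiplication by $i$). Now $j$ and $k=ij$ are imaginary and orthogonal to $i$, and span the complex line $\mathbb{C}j\subset\mathbb{C}^3$, which $G_B$ fixes pointwise; hence $G_B$ is contained in the pointwise stabilizer in $\mathrm{SU}(3)$ of a complex line, which is the ``block'' copy $\mathrm{SU}(2)\times\{1\}$ acting on the orthogonal $\mathbb{C}^2=B\ell$. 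As both $G_B$ and this block are connected of dimension $3$, they coincide. (As a consistency check, the $7$-dimensional representation of $K$ then restricts to $G_B$ as $3\cdot\mathbf{1}\oplus 2\cdot\mathbf{2}$, in agreement with $\mathrm{Im}(B)$ being fixed and $B\ell$ being two copies of the standard representation.)

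Next I would invoke the branching rule for $\mathrm{SU}(3)\downarrow\mathrm{SU}(2)$ with respect to this block subgroup: every irreducible $V_{(p,q)}$ of $\mathrm{SU}(3)$ satisfies $\dim V_{(p,q)}^{\mathrm{SU}(2)}=1$, so that $(\mathrm{SU}(3),\mathrm{SU}(2))$ is a Gelfand pair and $L^2(S^5)=L^2(\mathrm{SU}(2)\backslash\mathrm{SU}(3))=\bigoplus_{(p,q)}V_{(p,q)}$, with every irreducible occurring. For the statement at hand one only needs $\dim V_{(p,q)}^{\mathrm{SU}(2)}\geq 1$, which follows by a short induction: under the block $\mathrm{SU}(2)$ one has $\mathrm{Sym}^p(\mathbb{C}^3)=\bigoplus_{a=0}^{p}\mathrm{Sym}^a(\mathbb{C}^2)$ and $\mathrm{Sym}^q(\overline{\mathbb{C}^3})=\bigoplus_{b=0}^{q}\mathrm{Sym}^b(\mathbb{C}^2)$, so $\mathrm{Sym}^p(\mathbb{C}^3)\otimes\mathrm{Sym}^q(\overline{\mathbb{C}^3})$ contains the trivial representation with multiplicity $\min(p,q)+1$; subtracting the trace submodules $V_{(p-r,q-r)}$ for $r\geq 1$ and inducting on $\min(p,q)$ leaves $V_{(p,q)}$ with exactly one such vector.

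Finally I would combine the two steps. For $\pi\in\mathrm{Irr}(K)$, the restriction $\pi|_{\mathrm{SU}(3)}$ is nonzero and hence contains some irreducible $V_{(p,q)}$, which in turn contains a nonzero vector fixed by $G_B$; therefore $\pi^{G_B}\neq 0$ and $\pi\hookrightarrow L^2(G_B\backslash K)$. (Equivalently, by induction in stages $L^2(G_B\backslash K)=\mathrm{Ind}_{\mathrm{SU}(3)}^{K}L^2(S^5)=\bigoplus_{(p,q)}\mathrm{Ind}_{\mathrm{SU}(3)}^{K}V_{(p,q)}$, and $\pi$ occurs in $\mathrm{Ind}_{\mathrm{SU}(3)}^{K}V_{(p,q)}$ whenever $V_{(p,q)}$ occurs in $\pi|_{\mathrm{SU}(3)}$.) The one point requiring genuine care is the first step --- identifying $G_B$ with the block $\mathrm{SU}(2)$ rather than one of the other, non-conjugate copies of $\mathrm{SU}(2)$ in $G_2$ --- but this is forced by the way $G_B$ acts on $\mathbb{O}$, and the restriction of the $7$-dimensional representation provides an independent verification.
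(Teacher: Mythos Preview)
Your argument is correct and follows essentially the same route as the paper: one places $G_B \cong \mathrm{SU}(2)$ inside $\mathrm{SU}(3) \subset G_2(\mathbb{R})$ and then invokes the Gelfand--Zetlin branching rule (which you reprove explicitly) to see that every irreducible of $\mathrm{SU}(3)$ has a nonzero $\mathrm{SU}(2)$-fixed vector. The paper's proof is a two-line version of yours, simply citing Gelfand--Zetlin rather than carrying out the symmetric-power computation.
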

\vskip 5pt
\begin{proof}
One has $G_{\mathbb H}(\mathbb{R}) \subset {\rm SU}_3 \subset G^c_2(\mathbb{R})$.  By the Gelfand-Zetlin branching rule, any irreducible representation of $\SU_3$ has nonzero vectors fixed by $G_{\mathbb H}(\mathbb{R}) \cong {\rm SU}_2$.  The proposition follows.  
\end{proof}

\subsection{First globalization result}  \label{SS:global}
Let $k$ be a totally real number field and $\mathbb O$ be a totally definite octonion algebra over $k$, so that 
$\mathbb O_{\infty}$ is non-split for all archimedean places $\infty$ of $k$, and set $G=\Aut(\bbO)$. For any place $v$ of $k$, write  $G_v=G(k_v)$ for simplicity.  Then $G_v$ is 
anisotropic for archimedean places $v$ and split otherwise.  


\vskip 5pt 
Let $D\subseteq \mathbb O$ be a quaternion subalgebra. Define 
\[ 
L^2_{D}(G(k)\backslash G(\mathbb A)) \subset L^2(G(k)\backslash G(\mathbb A))
\] 
be the Hilbert subspace which is orthogonal to the span of all automorphic representations $\Pi=\otimes_v \Pi_v$ such that  
\[ 
\int_{G_D(k)\backslash G_D(\mathbb A)} f(g) ~dg = 0 
\] 
for all $f\in \Pi$. Hence, for any $\Pi \subset L^2_{D}(G(k)\backslash G(\mathbb A))$, the global period integral over $G_D$ is nonzero on $\Pi$.
\vskip 5pt

\begin{prop} \label{P:period_global}  
  Let $v_1, \ldots , v_n$ be a (nonempty) set of places of $k$. 
 For nonarchimedean $v_i$, let $\pi_i$ be either the Steinberg representation ${\rm St}$  or a supercuspidal representation of $G_{v_i}$ such that $\theta(\pi_i) \in \Irr(H_{v_i})$ is square integrable. For a real $v_i$, let $\pi_i$ be any irreducible (finite dimensional) representation. 
 Then there exists  a quaternion algebra $D \subseteq \mathbb{O}$ and 
an automorphic representation $\Pi=\otimes_v \Pi_v$ in $L^2_{D}(G(k)\backslash G(\mathbb A))$ 
such that  $\Pi_{v_i}\cong \pi_i$ for $i=1, \ldots, n$.   
\end{prop}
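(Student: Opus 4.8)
The plan is to produce $\Pi$ by a relative trace formula argument for the pair $(G, G_B)$, with the local non-vanishing statements of \S\ref{SS:local} supplying the local test functions; the only non-formal inputs are those lemmas and a Hasse-principle statement for quaternion subalgebras of $\mathbb{O}$.

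\emph{Step 1: local distinction data and choice of $B$.} For each $i$ we first fix a quaternion $k_{v_i}$-algebra $B_i$ with $\Hom_{G_{B_i}}(\pi_i,\mathbb{C}) \neq 0$: for a real $v_i$ take $B_i$ the division quaternion $\mathbb{R}$-algebra and use Lemma \ref{L:real_period}; for $v_i$ with $\pi_i = \mathrm{St}$ take $B_i$ ramified and use Lemma \ref{L:su_period_St} (which in fact produces a $G_{B_i}$-fixed vector); for $v_i$ with $\pi_i$ supercuspidal, take $B_i$ to be the algebra furnished by Corollary \ref{C:su_period}, the embedding $\pi_i \hookrightarrow \mathrm{ind}^{G_{v_i}}_{G_{B_i}}(1)$ followed by evaluation at the base coset giving the required $G_{B_i}$-invariant functional. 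Now choose a totally definite quaternion $k$-algebra $B$ with $B \otimes_k k_{v_i} \cong B_i$ for all $i$: its ramification set is constrained only at the $v_i$ and at the (forced) archimedean places, and any parity defect is absorbed by ramifying $B$ in addition at one auxiliary finite place $u_0 \notin \{v_i\}$. Since $\mathbb{O}$ is split at every finite place and the division quaternion $\mathbb{R}$-algebra embeds into the division octonion $\mathbb{R}$-algebra, there is no local obstruction to an embedding $B \hookrightarrow \mathbb{O}$, hence a global one exists; fix it, obtaining $G_B \subset G$, which is anisotropic over $k$ (as $B$, being ramified somewhere, is a division algebra), so that $[G_B] := G_B(k)\backslash G_B(\mathbb{A})$ is compact.

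\emph{Step 2: the relative trace formula.} For $f = \otimes_v f_v \in C_c^\infty(G(\mathbb{A}))$ form the kernel $K_f(x,y) = \sum_{\gamma \in G(k)} f(x^{-1}\gamma y)$ and the relative trace $J(f) = \int_{[G_B]}\int_{[G_B]} K_f(x,y)\, dx\, dy$, which converges because $[G_B]$ is compact. Its spectral expansion has the shape $J(f) = \sum_\Pi J_\Pi(f)$, where $J_\Pi(f)$ is assembled from $G_B(\mathbb{A})$-period functionals on $\Pi$; in particular $J_\Pi(f) \neq 0$ forces $\Pi$ to have nonzero $G_B$-period, i.e. $\Pi \in L^2_B(G(k)\backslash G(\mathbb{A}))$. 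At each $v_i$ take $f_{v_i}$ to be a matrix coefficient of $\pi_i$ (for supercuspidal $\pi_i$, and for archimedean $v_i$ where $G_{v_i}$ is compact) or a pseudo-coefficient of $\mathrm{St}$ (for $\pi_i = \mathrm{St}$), normalized against the $G_{B_i}$-invariant vector of Step 1, so that only $\Pi_{v_i} \cong \pi_i$ contributes to $J_\Pi(f)$ and so that the $v_i$-factor of the relative orbital integral at $\gamma = 1$ is nonzero; at almost all other $v$ take $f_v$ the unit of the spherical Hecke algebra. The geometric expansion reads $J(f) = \sum_{\gamma \in G_B(k)\backslash G(k)/G_B(k)} \mathrm{Orb}_\gamma(f)$.

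\emph{Step 3: non-vanishing of $J(f)$ and conclusion.} It remains to arrange $J(f) \neq 0$. Choosing $f_{u_0}$ supported in a sufficiently small neighbourhood of a suitable point at the auxiliary place $u_0$ collapses the geometric side to a single nonzero relative orbital integral — the standard localization underlying the simple (relative) trace formula. Hence $J(f) \neq 0$, so some automorphic $\Pi$ with $\Pi_{v_i} \cong \pi_i$ for all $i$ and with nonzero $G_B$-period exists, and it lies in $L^2_B(G(k)\backslash G(\mathbb{A}))$, as required. (If all the $v_i$ are archimedean one additionally imposes $\Pi_{u_0} \cong \mathrm{St}$, which is consistent with $B$ being ramified at $u_0$ and forces $\Pi$ cuspidal, removing the finitely many residual contributions; if some $\pi_i$ is supercuspidal or Steinberg, cuspidality is automatic.) The \textbf{main obstacle} is exactly this last step: isolating the identity contribution on the geometric side and checking it is nonzero, which is where the freedom at the auxiliary place $u_0$ is spent; a secondary, purely algebraic point is that the prescribed local quaternion algebras $B_i$ are indeed the localizations of one totally definite $B$ embedding into $\mathbb{O}$, settled by the Hasse principle in Step 1.
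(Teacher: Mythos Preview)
Your approach via a relative trace formula for the pair $(G, G_B)$ is genuinely different from the paper's. The paper argues through weak containment in the Fell topology: Poincar\'e series (\`a la Sakellaridis--Venkatesh) show that $\bigotimes_i L^2(G_{B,v_i}\backslash G_{v_i})$ is weakly contained in $L^2_B(G(k)\backslash G(\mathbb{A}))$, and one then uses that each $\pi_i$ is \emph{isolated} in the unitary dual of $G_{v_i}$---trivially for supercuspidals and finite-dimensional representations, and by the Mili\v{c}i\'c-style argument of Proposition~\ref{P:isolated} for the Steinberg representation---to upgrade weak containment to an honest subrepresentation via Corollary~\ref{C:weak}. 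Your Step~1 (choice and globalization of $B$, including the Hasse-principle embedding into $\mathbb{O}$) is essentially the paper's first sentence, spelled out more carefully, and is fine.

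There is, however, a real gap in your treatment of the Steinberg case in Steps~2--3. A pseudo-coefficient $f_{\mathrm{St}}$ satisfies $\mathrm{tr}\,\pi(f_{\mathrm{St}})=0$ only for \emph{tempered} $\pi\neq\mathrm{St}$; the operator $\pi(f_{\mathrm{St}})$ need not vanish, and its trace on non-tempered $\pi$ can be nonzero. Since the spectral term $J_\Pi(f)$ depends on $\Pi_{v_i}(f_{v_i})$ as an operator and not merely through its trace, you cannot conclude that only $\Pi_{v_i}\cong\mathrm{St}$ contributes. On the geometric side, the identity relative orbital integral at such a $v_i$ computes to $\mathrm{vol}(G_{B_i})\int_{G_{B_i}} f_{\mathrm{St}}$, and non-vanishing of this quantity is not a standard property of pseudo-coefficients. (Since $\mathrm{St}$ is not supercuspidal, there is no compactly supported matrix coefficient available as a substitute.) This is exactly the obstacle the paper circumvents by proving $\mathrm{St}$ isolated in the \emph{full} unitary dual; your trace-formula route would most likely need to import that same isolation result to separate $\mathrm{St}$ from spurious spectral contributions, at which point the paper's direct Fell-topology argument is the shorter path.
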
 
\vskip 5pt

\begin{proof}  Pick $D$ such that for all $i=1, \ldots, n$,  
 \[  \pi_i \subset L^2(G_{D,v_i} \backslash G_{v_i} ). \] 
 This is possible  by Corollary \ref{C:su_period},  Lemma \ref{L:su_period_St} and Lemma \ref{L:real_period}.
 We can now proceed  following  \cite[Theorem 16.3.2  and Remark 16.4.1]{Sak_Ven}. 
 Using Poincar\'e series arising from compactly supported functions on $G_D\backslash G$, 
 one shows that 
\[ 
\bigotimes_{i=1}^n L^2(G_{D, v_i} \backslash G_{v_i})  \quad \text{is weakly contained in} \quad L^2_{D} (G(k)\backslash G(\mathbb A)) \]
in the sense of  Definition \ref{D:weak}.   
Hence,  $\otimes_i \pi_i$ is  weakly contained there as well. 
 The proposition now follows from 
Corollary \ref{C:weak} , since supercuspidal representations and $\mathrm{St}$   are isolated  in the unitary dual of $G_{v_i}$ by Proposition \ref{P:isolated} 
(for the Steinberg representation).  
\end{proof}
\vskip 5pt  

We consider the global theta correspondence for the dual pair $G \times H$ 
in the adjoint group of absolute type $E_7$ (and $k$-rank $3$), that corresponds to the Albert algebra $J_3(\mathbb O)$ via the Koecher-Tits construction,  with respect to the minimal representation 
constructed in \cite[Theorem 6.4]{HS}.   

\begin{cor}  \label{C:cuspidal}
 In the context of Proposition \ref{P:period_global}, the representation $\Pi$ has nonzero global theta lift to $H = \PGSp$. 
 Moreover, this global theta lift is cuspidal if one of the following conditions hold:
 \vskip 5pt
 
 \begin{itemize}
 \item at some nonarchimedean $v_i$, $\pi_i$ is the Steinberg representation;
 \item at a real $v_i$, $\pi_i$ has regular highest weight. 
 \end{itemize}
 \end{cor}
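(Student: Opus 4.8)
The plan is to prove Corollary \ref{C:cuspidal} by combining the local non-vanishing results of \S \ref{SS:local} with the theory of the global theta correspondence for the dual pair $G \times H \subset E_{7,3}$ built from the minimal representation of \cite{HS}. The non-vanishing of the global theta lift will follow from a period computation: the global period of $\Pi$ over $G_B$ is nonzero by construction (since $\Pi \subset L^2_B(G(k)\backslash G(\mathbb{A}))$), and this period is precisely what detects non-vanishing of the theta lift. The cuspidality will be established by a tower argument, showing that under either of the stated local conditions the theta lift to the next smaller group in the Witt tower vanishes, forcing the lift to $H = \PGSp$ to be cuspidal.

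\textbf{Step 1: Non-vanishing of the global theta lift.} First I would recall the see-saw (or direct unfolding) identity relating the global theta integral for $G \times H$ to a period integral. Concretely, restricting a cusp form in the minimal representation $\Pi_{E_7}$ along the dual pair and integrating against a form in $\Pi$, the resulting theta lift $\Theta(\Pi)$ to $H$ has a nonzero Fourier coefficient along the Siegel unipotent radical $N$ attached to a character $\psi_N$ in the open orbit parameterized by $B$; by the computation $\Pi_{E_7, N, \psi_N} = \mathrm{ind}^G_{G_B}(1)$ from \cite[Section 5, Lemma 3.4]{GrS2} (used already in Lemma \ref{L:su_period}), this Fourier coefficient is computed by the $G_B$-period of the form on $G$. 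Since $\Pi$ lies in $L^2_B(G(k)\backslash G(\mathbb{A}))$, this period is not identically zero on $\Pi$, hence $\Theta(\Pi) \neq 0$ on $H$.

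\textbf{Step 2: Cuspidality via the Witt tower.} Here the group $H' = \PGSO_6$ (equivalently, the smaller member of the Rallis--Witt tower below $\PGSp_6 = H$) plays the role of the obstruction. By the standard tower property, $\Theta(\Pi)$ to $H$ is cuspidal unless $\Pi$ already has nonzero theta lift to the smaller group $H'$. So it suffices to show that under either stated hypothesis, $\Theta_{H'}(\Pi) = 0$. For the case where $\pi_i = \mathrm{St}$ at a nonarchimedean place $v_i$: the local theta lift of the Steinberg representation of $G_{v_i}$ to the smaller group $H'_{v_i}$ is zero (this is the local statement underlying why $\theta(\mathrm{St})$ is Steinberg on $\PGSp_6$ and does not descend further — it follows from the analysis of Jacquet modules of the minimal representation, as in \cite{GS}). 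Since the local theta lift at $v_i$ vanishes, the global lift to $H'$ vanishes. For the case where $\pi_i$ at a real place $v_i$ has regular highest weight: here I would invoke the archimedean matching of infinitesimal characters (as in \cite{HPS}, cited later in Lemma \ref{L:allin}); a sufficiently regular infinitesimal character for $\pi_i$ forces the would-be theta lift to $H'_{v_i}$ to have an infinitesimal character outside the range of unitary representations of the smaller group $H'_{v_i}(\mathbb{R})$, so the local lift vanishes and hence so does the global one.

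\textbf{The main obstacle} I expect is Step 2 in the archimedean regular-weight case: pinning down precisely which infinitesimal characters of the smaller orthogonal group $H'_\infty$ can arise as theta lifts, and checking that regularity of $\pi_i$'s highest weight pushes us out of that set. This requires the explicit correspondence of infinitesimal characters for the exceptional dual pair at the real place, i.e. the archimedean analog of the Jacquet-module computations, for which one relies on \cite{HPS}; the bookkeeping of weights and half-sums of roots is delicate. The nonarchimedean Steinberg case, by contrast, should be immediate from the results of \cite{GS} already quoted in the excerpt. Finally, I would note that the non-vanishing in Step 1 is unconditional and does not require either hypothesis — only cuspidality does — matching the phrasing of the corollary.
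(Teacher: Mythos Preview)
Your Step~1 is essentially correct and is exactly the content of \cite[Chapter~5, Prop.~4.5]{GrS2}, which the paper simply cites: the $G_B$-period on $G$, nonzero by the very definition of $L^2_B(G(k)\backslash G(\mathbb A))$, computes a Fourier coefficient of the theta lift along the Siegel unipotent radical of $\PGSp_6$.

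Your Step~2, however, rests on a misidentification. You invoke ``the smaller member of the Rallis--Witt tower below $\PGSp_6$'' and call it $\PGSO_6$. There is no such tower here: we are not in a classical symplectic--orthogonal dual pair but in the \emph{exceptional} dual pair $G_2 \times \PGSp_6 \subset E_{7,3}$. The boundary degenerations of the $E_7$ minimal representation along parabolics of $\PGSp_6$ do not produce a theta correspondence with $\PGSO_6$; rather, the relevant ``lower step'' involves the $E_6$ dual pair $G_2 \times (\PGL_3 \rtimes \bbZ/2\bbZ)$ (and the degenerate pieces coming from Jacquet modules of the $E_7$ minimal representation). Worse, even after correcting the target, the naive tower criterion fails: the Steinberg representation of $G_2$ \emph{does} participate in the theta correspondence with $\PGL_3$ (it lifts to the Steinberg of $\PGL_3$), so ``local lift to the smaller group vanishes'' is simply false for $\mathrm{St}$.

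The actual argument in \cite[Chapter~5, Cor.~4.9]{GrS2}, which the paper cites without elaboration, is a direct computation of the constant terms of $\Theta(\Pi)$ along the maximal parabolics of $\PGSp_6$, using the explicit Jacquet modules of the minimal representation of $E_7$. The hypotheses (Steinberg at a finite place, or regular highest weight at a real place) are used to kill the specific boundary pieces that arise in that computation, not to force vanishing of a single lower theta lift. So the strategy ``local obstruction $\Rightarrow$ global cuspidality'' is right in spirit, but the obstruction you named is the wrong one, and the argument must go through the actual structure of the $E_7$ minimal representation rather than a classical Witt tower.
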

 \vskip 5pt
 
 \begin{proof} 
The nonvanishing of the global theta lift follows by  \cite[Chapter 5, Prop. 4.5]{GrS2}: it is for this global nonvanishing result that we insist on globalizing with a nonzero $G_D$-period in Proposition \ref{P:period_global}. The cuspidality of the global theta lift follows by \cite[Chapter 5, Cor. 4.9]{GrS2}. We note that though \cite{GrS2} works over the base field $\Q$, the results from \cite{GrS2} that we use in this proof hold over a general number field $k$ with the same proofs given there.  
\end{proof}

\vskip 10pt
\subsection{Second globalization result}  \label{SS:global_2}

Assume now that $k$ is an arbitrary number field and $G$ is a simple split group over $k$. Let $U$ be the unipotent radical of a Borel subgroup of $G$, and fix a Whittaker character $\psi = \prod_v \psi_v:  U(\mathbb A)/U(k) \rightarrow \mathbb C^{\times}$. 
In particular, we have a notion of  local and  automorphic Whittaker functional (relative to $\psi$) and the notion of (Whittaker) generic representations, both locally and globally. 
\vskip 5pt

Fix a place $w$ of $k$ and a $\psi_w$-generic supercuspidal representation $\sigma$ of $G(k_w)$.   Let 
\[ 
L^2_{\psi-\mathrm{gen}, \sigma} (G(k)\backslash G(\mathbb A)) \subset L_{cusp}^2(G(k)\backslash G(\mathbb A))
\] 
be the Hilbert subspace of the automorphic discrete spectrum of $G$ which is orthogonal to the span of all square-integrable  automorphic representations $\Pi=\otimes_v \Pi_v$ 
such that either the automorphic $\psi$-Whittaker functional vanishes on $\Pi$ or    $\Pi_w\ncong \sigma$. 
Then any $\Pi \subset L^2_{\psi-\mathrm{gen}, \sigma} (G(k)\backslash G(\mathbb A))$ is globally (Whittaker) generic and $\Pi_w \cong \sigma$.

\vskip 5pt 
For any place $v$, let $L^2_{\psi_v}(N(k_v)\backslash G(k_v))$ be the space of square integrable Whittaker functions. Recall that all generic square integrable representations of $G(k_v)$ are 
direct summands of $L^2_{\psi_v}(N(k_v)\backslash G(k_v))$ \cite{Sak_Ven}.

\begin{prop}  \label{P:period_global_2} 
Let $v_1, \ldots , v_n$ be a set of  (possibly archimedean) places of $k$, different from $w$. For $i=1, \ldots, n$, let $\delta_i$ be a $\psi_{v_i}$-generic square integrable representation of $G(k_{v_i})$. Assume that  these representations are isolated in $ L^2_{\psi-\mathrm{gen}, \sigma} (G(k)\backslash G(\mathbb A))$. Then there exists an irreducible cuspidal automorphic representation $\Pi$ in 
$ L^2_{\psi-\mathrm{gen}, \sigma} (G(k)\backslash G(\mathbb A))$  such that $\Pi_{v_i} \cong \delta_i$ for $i=1, \ldots, n$.  Moreover, we may assume that for all  finite places $v$ different from $w$ or $v_i$, $\Pi_v$ is a principal series representation induced from a Borel subgroup. 
\end{prop}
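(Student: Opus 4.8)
The plan is to carry out the Whittaker analogue of the proof of Proposition~\ref{P:period_global}: the $G_B$-period is replaced by the automorphic $\psi$-Whittaker functional, and $L^2_{\psi-\mathrm{gen}, \sigma}(G(k)\backslash G(\mathbb{A}))$ plays the role of $L^2_B(G(k)\backslash G(\mathbb{A}))$. The first and main step is a weak-containment statement, obtained by the Poincar\'e-series method of Sakellaridis--Venkatesh \cite[Theorem 16.3.2 and Remark 16.4.1]{Sak_Ven}, now applied to the \emph{Whittaker-induced} space $N\backslash G$ equipped with the character $\psi$ rather than to a homogeneous space. Forming Poincar\'e series out of smooth functions on $N(\mathbb{A})\backslash G(\mathbb{A})$ which transform under $N(\mathbb{A})$ by $\psi$ and are compactly supported modulo $N(\mathbb{A})$, and using at $w$ a test function adapted to $\sigma$ (legitimate since $\sigma$, being supercuspidal, is a discrete point of the unitary dual of $G(k_w)$ and a direct summand of $L^2_{\psi_w}(N(k_w)\backslash G(k_w))$), one obtains that the unitary representation
\[
\sigma \;\otimes\; \bigotimes_{i=1}^{n} L^2_{\psi_{v_i}}(N(k_{v_i})\backslash G(k_{v_i}))
\]
of $G(k_w)\times\prod_{i=1}^{n}G(k_{v_i})$ is weakly contained, in the sense of Definition~\ref{D:weak}, in $L^2_{\psi-\mathrm{gen}, \sigma}(G(k)\backslash G(\mathbb{A}))$ (viewed as a representation of the same product group by restriction).

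Granting this, the rest is formal. Each $\delta_i$ is $\psi_{v_i}$-generic and square-integrable, hence a direct summand of $L^2_{\psi_{v_i}}(N(k_{v_i})\backslash G(k_{v_i}))$ by \cite{Sak_Ven}; therefore $\sigma\otimes\bigotimes_{i=1}^{n}\delta_i$ is likewise weakly contained in $L^2_{\psi-\mathrm{gen}, \sigma}(G(k)\backslash G(\mathbb{A}))$. Now $\sigma$ is isolated in the unitary dual of $G(k_w)$ since it is supercuspidal, and by hypothesis each $\delta_i$ is isolated in $L^2_{\psi-\mathrm{gen}, \sigma}(G(k)\backslash G(\mathbb{A}))$ as a $G(k_{v_i})$-representation; applying Corollary~\ref{C:weak} successively at $w$ and at each $v_i$ turns weak containment into honest containment and produces an irreducible subrepresentation $\Pi=\otimes_v\Pi_v\subset L^2_{\psi-\mathrm{gen}, \sigma}(G(k)\backslash G(\mathbb{A}))$ with $\Pi_w\cong\sigma$ and $\Pi_{v_i}\cong\delta_i$ for $i=1,\dots,n$. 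By definition of this space $\Pi$ is globally $\psi$-generic, and since $L^2_{\psi-\mathrm{gen}, \sigma}(G(k)\backslash G(\mathbb{A}))\subseteq L^2_{cusp}(G(k)\backslash G(\mathbb{A}))$ (a residual representation would have a non-supercuspidal component at $w$), $\Pi$ is automatically cuspidal.

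For the principal-series condition at the remaining finite places, one exploits the freedom in the choice of the local test function in the Poincar\'e series: at each finite $v\notin\{w,v_1,\dots,v_n\}$, taking it bi-invariant under a fixed hyperspecial maximal compact (or an Iwahori) subgroup makes the local spectral projection land in the unramified (resp.\ tamely ramified) generic part of $L^2_{\psi_v}(N(k_v)\backslash G(k_v))$, and choosing the unramified data in general position forces $\Pi_v$ to be an irreducible principal series $\Ind_{B(k_v)}^{G(k_v)}(\chi_v)$; this is built into the construction above. The main obstacle is the weak-containment assertion of the first step: one must run the spherical Plancherel and Poincar\'e-series formalism of \cite{Sak_Ven} in the Whittaker-induced, non-homogeneous setting, while simultaneously controlling the local spectral behaviour at $w$ (the fixed supercuspidal $\sigma$), at the $v_i$ (the prescribed generic square-integrable $\delta_i$), and at the other finite places (principal series). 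Once that is in hand, the deduction of the proposition from the isolation hypotheses via Corollary~\ref{C:weak} is exactly as in Proposition~\ref{P:period_global}.
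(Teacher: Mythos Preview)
Your proposal is correct and follows essentially the same approach as the paper: Poincar\'e series \`a la \cite{Sak_Ven} to get weak containment of $\otimes_i \delta_i$ in $L^2_{\psi-\mathrm{gen},\sigma}$, then isolation to upgrade to honest containment, with test functions at the remaining finite places to force principal series (the paper cites \cite{Gan_Ich} for this last point). Two minor remarks: first, you need not treat the place $w$ separately, since the condition $\Pi_w\cong\sigma$ is already built into the very definition of $L^2_{\psi-\mathrm{gen},\sigma}$; second, you should invoke Proposition~\ref{P:weak} rather than Corollary~\ref{C:weak}, because the hypothesis only gives isolation of $\delta_i$ in $L^2_{\psi-\mathrm{gen},\sigma}$ and not in the full unitary dual of $G(k_{v_i})$ --- Proposition~\ref{P:weak} combined with this relative isolation is precisely what is needed.
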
 
\vskip 5pt

\begin{proof} Again, using Poincar\'e series \cite[Theorem 16.3.2  and Remark 16.4.1]{Sak_Ven}, one proves that 
\[ 
L^2_{\psi_{v_1}}(N(k_{v_1})\backslash G(k_{v_1}))\otimes \cdots \otimes L^2_{\psi_{v_n}}(N(k_{v_n})\backslash G(k_{v_n}))
\] 
is weakly contained in $L^2_{\psi-\mathrm{gen}, \sigma} (G(k)\backslash G(\mathbb A))$. In particular, $\delta_1\otimes \cdots \otimes \delta_n$ is weakly contained there as well.  
The existence of the globalization $\Pi$  now follows from Proposition \ref{P:weak} and the hypothesis of isolation of the $\delta_i$'s.   The extra control at finite places different from $w$ and $v_i$ is achieved by using appropriate test functions  at these other places  as the input for the Poincar\'e series; see  \cite[proof of Prop. A.2]{Gan_Ich}.

\end{proof}

\vskip 5pt

In applications of Proposition \ref{P:period_global_2}, one would need to verify the  hypothesis of isolation  in the proposition. For classical groups and their associated similitude
 groups, this isolation is a consequence of estimates towards the Ramanujan conjecture for globally generic cuspidal representations; see \cite[Prop. A.5 and Lemma A.2]{ILM}. The following verifies it for the group $G_2$.
 \vskip 5pt
 
 \begin{lemma} \label{L:isolation}  
In the context of Proposition \ref{P:period_global_2}, assume  that $\sigma$ is a generic cuspidal representation of $G_2(k_w)$ such that its local theta lift to $\PGSp(k_w)$ is a generic supercuspidal representation (using \cite{G}, there are such representations $\delta$ of depth 0).
 Then $\otimes_i \delta_i$ is isolated in $ L^2_{\psi-\mathrm{gen}, \sigma} (G(k)\backslash G(\mathbb A))$ with respect to the Fell topology. 
\end{lemma}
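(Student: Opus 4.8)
The plan is to adapt to $G_2$ the strategy used for classical groups in \cite[Prop. A.5 and Lemma A.2]{ILM}: reduce the isolation statement to a \emph{uniform} bound towards the Ramanujan conjecture for the representations occurring in $L^2_{\psi-\mathrm{gen},\sigma}(G(k)\backslash G(\mathbb{A}))$, and then establish that bound by transferring such representations to $\GL_7$ through the theta lift to $\PGSp$. Concretely, suppose $\{\Pi^{(j)}\}_j$ is a sequence of cuspidal automorphic representations in $L^2_{\psi-\mathrm{gen},\sigma}$ with $\Pi^{(j)}_{v_i}\to\delta_i$ in the Fell topology for every $i$; one must show that $\Pi^{(j)}_{v_i}\cong\delta_i$ once $j$ is large. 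Each $\delta_i$ is square-integrable (a discrete series in the archimedean case), hence an isolated point of the tempered dual of $G(k_{v_i})$, and it remains isolated inside the subset of the unitary dual consisting of those representations all of whose exponents are bounded by a fixed $\theta_0<1/2$. So it suffices to produce such a $\theta_0$, independent of the automorphic representation, so that every $\Pi\subset L^2_{\psi-\mathrm{gen},\sigma}$ is $\theta_0$-tempered at every place; then a suitable neighbourhood of $\delta_i$ in $\widehat{G(k_{v_i})}$ contains no $\theta_0$-tempered representation other than $\delta_i$, and the conclusion follows.

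To obtain such a bound, let $\Pi\subset L^2_{\psi-\mathrm{gen},\sigma}(G(k)\backslash G(\mathbb{A}))$, so $\Pi$ is a globally $\psi$-generic cuspidal representation of $G=G_2$ with $\Pi_w\cong\sigma$. Since $\Pi$ is globally generic, its theta lift $\Sigma:=\theta(\Pi)$ to $\PGSp$ in $E_7$ is nonzero and globally generic. Moreover, since by hypothesis $\theta(\sigma)=\theta(\Pi_w)$ is a supercuspidal representation of $\PGSp(k_w)$, all of its proper Jacquet modules vanish, which forces the constant terms of $\Sigma$ along every proper parabolic subgroup of $\PGSp$ to vanish, so $\Sigma$ is cuspidal (cf. \cite[Chapter 5]{GrS2}). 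Restricting $\Sigma$ to $\Sp$ and applying the transfer to $\GL_7$ (\cite{CKPSS} or \cite{A}) yields an isobaric automorphic representation $\tau_1\boxplus\cdots\boxplus\tau_r$ with each $\tau_a$ a unitary cuspidal representation of $\GL_{n_a}$ and $\sum_a n_a=7$. By local--global compatibility of this transfer and the compatibility of the local theta correspondence $G_2\times\PGSp$ with the Langlands classification (Theorem \ref{T:dichotomy} and \cite{GS}), the $7$-dimensional parameter attached to $\Pi_v$ equals $\bigoplus_a\phi_{\tau_{a,v}}$ at every place $v$. The known bounds towards Ramanujan for cuspidal representations of $\GL_n$ with $n\le 7$ — already the Jacquet--Shalika bound excluding the unitary boundary — then show that each $\tau_{a,v}$, hence $\Pi_v$, has exponents strictly less than a fixed $\theta_0<1/2$ depending only on $n=7$. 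This is the required uniform bound, and together with the first paragraph it proves the lemma.

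The genuinely delicate point is the cuspidality of $\Sigma=\theta(\Pi)$ on $\PGSp$: this is exactly where the hypothesis that the local theta lift $\theta(\sigma)$ is a generic supercuspidal representation is used, and without it $\Sigma$ could be an Eisenstein or residual automorphic representation, for which the transfer of ${\rm rest}(\Sigma)$ to $\GL_7$ would not be available in the form needed. A secondary technical point is that the exponent bound must be genuinely uniform over $\Pi$ at the finite places $v_i$; for this one uses that the local theta correspondence for $G_2\times\PGSp$ does not increase exponents, a consequence of the explicit description of this correspondence on non-tempered representations in \cite{GS}. Granting these two points, the transfer to $\GL_7$, the unconditional $\GL_n$-bounds, and the passage from a uniform $\theta_0<1/2$ bound to the isolation of the square-integrable $\delta_i$ are all standard, the last being the Burger--Sarnak-type argument already invoked in \cite{ILM}. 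For archimedean $v_i$ allowed in Proposition \ref{P:period_global_2}, one instead uses that a discrete series of a real reductive group is isolated in the $\theta_0$-tempered part of its unitary dual, and the argument is unchanged.
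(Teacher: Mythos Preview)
Your argument is correct and follows essentially the same route as the paper's: lift a globally generic cuspidal $\Pi$ on $G_2$ to a globally generic cuspidal $\Sigma$ on $\PGSp$ (cuspidality coming from the supercuspidal local component $\theta(\sigma)$ at $w$), use the known approximation to Ramanujan there, pull the exponent bound back to $G_2$ via the explicit local theta correspondence, and conclude isolation via \cite[Lemma A.2]{ILM}; the paper simply cites ``approximation to the Ramanujan conjecture for $\PGSp$'' rather than spelling out the passage to $\GL_7$, but that is the same input. One small correction: the Jacquet--Shalika bound only gives the open interval $(-\tfrac12,\tfrac12)$ and not a \emph{uniform} $\theta_0<\tfrac12$, so for the fixed bound you invoke you need Luo--Rudnick--Sarnak (or any of the subsequent improvements) rather than Jacquet--Shalika alone.
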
 
\vskip 5pt

\begin{proof} 
 Any irreducible $\Pi \subset L^2_{\mathrm{gen}, \sigma} (G_2(k)\backslash G_2(\mathbb A))$ has a nonzero globally generic  global theta lift 
$\Sigma$ (see  \cite[appendix]{HKT}. The hypothesis on $\sigma$ implies that $\Sigma$ is cuspidal. By the approximation to the Ramanujan conjecture for $\PGSp$, and our explicit results on local theta correspondence (including the correspondence of infinitesimal character at archimedean places), we conclude as in \cite[proof of Prop. A.2]{Gan_Ich} that for each $1\leq i \leq n$. 
\[  \Pi_{v_i} \in \Irr_{unit,\mathrm{gen}, \leq c}  (G_2), \]
where the latter is the set of generic irreducible unitary representations whose exponents are bounded by $c$. Hence,
\[  L^2_{\mathrm{gen}, \sigma} (G_2(k)\backslash G_2(\mathbb A)) \quad \text{is weakly contained in  $\Irr_{unit, \mathrm{gen}, \leq c}  (G_2)$} \]
as a representation of $G_2(k_{v_i})$.
 It follows by \cite[Lemma A.2]{ILM} that $\delta_i$ is isolated in $\Irr_{unit, \mathrm{gen}, \leq c}  (G_2)$ and hence in  $L^2_{\mathrm{gen}, \sigma} (G_2(k)\backslash G_2(\mathbb A))$ as well.
  \end{proof}

\vskip 10pt

 \section{\bf Appendix B: Topology of Unitary dual}  \label{S:appB}

Let $G$ be a (simple) Chevalley group over a $p$-adic field $F$.  Recall that a unitary representation $\pi$ of $G$ on a Hilbert space $H_{\pi}$ is continuous if, for every $v\in H_{\pi}$, 
the map $g\mapsto \pi(g) v$, where $g\in G$, is continuous.  In that case the subspace $V_{\pi} \subseteq H_{\pi}$ of smooth vectors is dense.  All unitary representations are assumed 
continous. 
We say that a smooth irreducible representation of $G$ is 
unitarizable, if it has a non-trivial $G$-invariant hermitian product or, equivalently, it is the space of smooth vectors in an irreducible unitary representation.

\subsection{Steinberg is isolated} Using an argument due to Mili\v ci\'c,  we prove that the Steinberg representation of $G$ 
is isolated, with respect of the Fell topology,  in the unitary dual of $G$. 
 (We refer the reader to \cite[Appendix F]{BHV}, for a nice summary of Fell topology, definitions and basic properties).  Here we work with smooth representations. 
 
 \vskip 5pt 
 An irreducible unitarizable representation $\pi$ of $G$ is a limit of a sequence of irreducible  unitarizable representations $\pi_n$ of $G$ if for one or, equivalently, 
any matrix coefficient $\langle \pi(g) v, v\rangle$ of $\pi$ there is a sequence of matrix coefficients $\langle \pi_n(g) v_n, v_n\rangle$ of $\pi_n$ converging uniformly on all compact sets in $G$ to 
$\langle \pi(g) v, v\rangle$.  Fix a special maximal compact subgroup $K$ in $G$. Then $K$ has a quotient $G(\mathbb F_q)$ where $\mathbb F_q$ is the residual field of $F$. 

\begin{prop}  \label{P:isolated} 
Let $\sigma$ be the Steinberg representation of $G$. If the rank of $G$ is at least 2, then $\sigma$ is isolated in 
the unitary dual of $G$. 
\end{prop}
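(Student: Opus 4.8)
The plan is to transfer the problem to the Iwahori--Hecke algebra, where the Steinberg representation $\sigma$ becomes the sign character, and then to deduce its isolation from the isolation of the trivial representation (Kazhdan's property (T), which is where the hypothesis that the rank is at least $2$ is used) by means of the Iwahori--Matsumoto involution. Fix an Iwahori subgroup $I\subset G$ with idempotent $e_I=\mathbf 1_I/\mathrm{vol}(I)$, and recall that $\sigma^I$ is one-dimensional, spanned by a unit vector $v_0$. The first step is to observe that any irreducible unitarizable $\pi$ sufficiently close to $\sigma$ in the Fell topology has $\pi^I\neq 0$: if $\langle\pi_n(g)w_n,w_n\rangle$ converges uniformly on compact sets to the diagonal matrix coefficient $\langle\sigma(g)v_0,v_0\rangle$ of $\sigma$, then, since $e_I$ is a self-adjoint idempotent supported on the compact group $I$, integrating gives $\|\pi_n(e_I)w_n\|^2=\langle\pi_n(e_I)w_n,w_n\rangle\to\langle\sigma(e_I)v_0,v_0\rangle=\|v_0\|^2=1$, so $\pi_n(e_I)\neq 0$ for $n$ large (and likewise for nets). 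Hence it suffices to prove that $\sigma$ is isolated inside the subspace $\widehat G_I\subset\widehat G$ of unitary representations admitting nonzero $I$-fixed vectors, since any net converging to $\sigma$ in $\widehat G$ eventually lies in $\widehat G_I$ and therefore converges to $\sigma$ in the subspace topology there.

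Next I would pass to the Iwahori--Hecke algebra $\mathcal H=\mathcal H(G,I)$ with $*$-operation $f^*(g)=\overline{f(g^{-1})}$. Its $C^*$-completion is the corner $e_I C^*(G)e_I$, whose spectrum (irreducible $*$-representations with the Jacobson topology) is homeomorphic to $\widehat G_I$ via $\pi\mapsto\pi^I$; in particular isolated points match up. Under this dictionary $\sigma$ corresponds to the sign character $T_s\mapsto -1$ and the trivial representation of $G$ to the index character $T_s\mapsto q$, the two one-dimensional characters arising from the quadratic relation $(T_s-q)(T_s+1)=0$. Since $G$ is split simple of rank at least $2$ it has property (T), so the trivial representation is isolated in $\widehat G$, hence in $\widehat G_I$, hence the index character is isolated in $\mathrm{Spec}(\overline{\mathcal H})$. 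Finally, the Iwahori--Matsumoto involution $\iota$, determined on generators by $\iota(T_s)=q-1-T_s$, is an order-two algebra automorphism of $\mathcal H$; since it fixes each self-adjoint generator $T_s$, the conjugate-linear anti-automorphism $\iota\circ\ast\circ\iota$ agrees with $\ast$ on the $T_s$ and hence everywhere, so $\iota$ is a $*$-automorphism, and pulling back unitary modules along $\iota$ induces a self-homeomorphism of $\mathrm{Spec}(\overline{\mathcal H})$. As $\iota$ interchanges the index and sign characters, isolation of the former forces isolation of the latter; unwinding the identifications, $\sigma$ is isolated in $\widehat G$.

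The step demanding the most care is the Hecke-algebra bookkeeping: that the relevant $C^*$-completion really is the corner $e_I C^*(G)e_I$ (so that the Fell topology, and in particular the notion of an isolated point, is faithfully transported), and that $\iota$ is genuinely a $*$-automorphism for the fixed real parameter $q$ --- it is exactly this last point that allows one to trade $\sigma$ for the trivial representation, and it is what fails in rank one. An alternative route, closer in spirit to Mili\v ci\'c's original argument and avoiding the black-box appeal to property (T), would work directly inside $\mathcal H$: the convergence of matrix coefficients on the double cosets $IsI$ forces an $I$-fixed unit vector of $\pi_n$ into the $(-1)$-eigenspace of every $\pi_n(T_s)$ in the limit, and one then rules out such an ``approximately sign-equivariant'' vector unless $\pi_n\cong\sigma$; the failure of this rigidity in rank one, where complementary series genuinely accumulate at $\sigma$, is precisely what the rank hypothesis excludes.
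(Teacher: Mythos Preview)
Your argument is correct and takes a genuinely different route from the paper's.  The paper follows Mili\v{c}i\'{c}'s original strategy: after the same first reduction to Iwahori-spherical representations, it proves a Vogan-type lemma showing that any sequence $\pi_n\to\sigma$ is, after passing to a subsequence, of the form $\pi_n\cong\Ind_P^G(\sigma_M\otimes\mu_n)$ with $\mu_n\to\mu_0$ and $\sigma$ a subquotient of $\Ind_P^G(\sigma_M\otimes\mu_0)$.  In rank $\geq 2$ this limit induced representation has a non-unitarizable subquotient (by Howe--Moore, the only unitarizable subquotients of $\Ind_B^G(\chi_0)$ are the trivial and the Steinberg, and a proper parabolic misses at least one of them), contradicting Mili\v{c}i\'{c}'s theorem that all subquotients of a limit of irreducible unitaries are unitary.

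Your approach trades this hands-on analysis of the principal series for two external inputs: Kazhdan's property~(T) for split simple groups of rank $\geq 2$, and the fact that the Iwahori--Matsumoto involution transports unitarity.  The delicate step you flag is precisely the second one: for $\iota$ to extend isometrically from $\mathcal H$ to the corner $e_IC^*(G)e_I$ (equivalently, for the universal $C^*$-envelope of $\mathcal H$ to coincide with that corner), one needs that every irreducible unitary $\mathcal H$-module arises as $\pi^I$ for some $\pi\in\widehat G_I$.  This is the Barbasch--Moy unitarity criterion, and it is what guarantees that pulling back along $\iota$ really gives a self-homeomorphism of $\widehat G_I$ rather than landing in some larger space of abstract Hecke modules.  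With that in hand your proof goes through cleanly.  The trade-off is that the paper's argument is more self-contained (Mili\v{c}i\'{c}'s limit theorem plus elementary facts about the unramified principal series), whereas yours is more conceptual, exhibiting the isolation of $\sigma$ as the image of property~(T) under the trivial--Steinberg symmetry.
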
 
\begin{proof}  
Let $\tau$ be any smooth irreducible representation of $K$. Let $f_{\tau}$ be a smooth function on $G$, supported on $K$, and equal to the complex conjugate 
of the character of $\tau$, divided by the volume of $K$, on $K$.  Thus, for any unitary representation $(\pi,V)$ of $G$, 
the operator $\pi(f_{\tau})$ is the projector on the $\tau$-isotypic subspace of $V$.   

\begin{lemma} \label{L:types} 
Let $\pi$ be an irreducible unitarizable representation such that the $K$-type $\tau$ occurs in $\pi$. Assume that $\pi$ is a limit of a sequence of 
irreducible representations $\pi_n$. Then, for almost all $n$, the $K$-type $\tau$ occurs in $\pi_n$.  
\end{lemma} 
\begin{proof} By the assumption there exists a non-zero $v\in V$, such that  $\langle \pi(f_{\tau}) v, v\rangle = \langle v, v\rangle \neq 0$. Let 
$\langle \pi_n(g) v_n, v_n\rangle$ be a sequence of matrix coefficients of $\pi_n$ converging uniformly on all compact sets in $G$, in particular on $K$, to 
$\langle \pi(g) v, v\rangle$. Thus the limit of $\langle \pi_n(f_{\tau}) v_n, v_n\rangle$ is $\langle \pi(f_{\tau}) v, v\rangle$, and the lemma follows.  
\end{proof} 

Next, we need the following lemma which, for real groups, was established by Vogan \cite{Vo} in a full generality.  

\begin{lemma} \label{L:vogan} 
Let $\sigma$ be the Steinberg representation of $G$. Let $\pi_n$ be a sequence of irreducible unitary representations $\pi_n$ of $G$ converging to $\sigma$. Then, after passing to a 
subsequence of $\pi_n$, if necessary, there exists a parabolic subgroup $P=MN$ of $G$, and a sequence of characters $\mu_n$ of $M$, converging to a character $\mu_0$, such that 
\begin{itemize} 
\item $\pi_n\cong \Ind_P^G(\sigma_M\otimes \mu_n)$, where $\sigma_M$ is the Steinberg representation of $M$. 
\item $\sigma$ is a subquotient of $\Ind_P^G(\sigma_M\otimes \mu_0)$. 
\end{itemize} 
\end{lemma} 
\begin{proof}  Fix a Borel subgroup $B=TU$ in $G$, where $U$ is the unipotent radical of $B$, and $T$ is a maximal split torus of $G$. 
 Let $\Delta$ be the corresponding set of simple roots. By Lemma \ref{L:types} we can assume, with out loss of generality, that all $\pi_n$ have 
Iwahori-fixed vectors.  It follows that all $\pi_n$ are subquotients of $\Ind_B^G(\chi_n)$ for some unramified characters $\chi_n$ of $T$. Here, as usually, the induction is normalized. 
Let  $\mathrm{st}$  be the inflation to $K$ of the Steinberg representation of the finite group $G(\mathbb F_q)$. 
The restriction of $\sigma$ to $K$ contains  $\mathrm{st}$. It follows that (almost all) all $\pi_n$ contain that $K$-type. 
\vskip 5pt 

We need to recall some well known facts on induced representations $\Ind_B^G(\chi)$  where $\chi$ is an unramified character of $T$. The first is that the 
type $\mathrm{st}$ appears with multiplicity one, a simple check using the Cartan decomposition $G=BK$.  The second concerns decomposition of 
$\Ind_B^G(\chi)$:  if the character $\chi$ is regular, then irreducible suquotients of $\Ind_B^G(\chi)$ are understood by a result of Rodier. We shall use these two to
pin-point the unique irreducible subquotient of $\Ind_B^G(\chi)$ containing the type $\mathrm{st}$.  
 More precisely,  for any $\alpha\in \Delta$, let $\alpha^{\vee}: F^{\times} \rightarrow T$ be the corresponding co-root. 
We can write $\chi\circ \alpha^{\vee} = |\cdot |^{m_{\alpha}}$ 
for some complex numbers $m_{\alpha}$, where the real part $\Re (m_{\alpha})$ is well defined.  
Assume that $\Re (m_{\alpha})>  0$ for all $\alpha\in \Delta$, in particular, the character $\chi$ is regular. Let $S\subseteq \Delta$ be the set of $\alpha$ such that $m_{\alpha}=1$.  
Let $P=MN\supseteq B$ be the unique parabolic subgroup of $G$ corresponding to $S$. Then $\Ind_B^G(\chi)$ contains $\Ind_P^G(\sigma_M\otimes \mu)$ for a character $\mu$ of $M$ that is easy to 
determine from $\chi$.  For example, if $S=\Delta$ then $P=G$ and this statement is the well known fact that a character twist of the Steinberg representation is contained in $\Ind_B^G(\chi)$. 
 The general case follows by induction in 
stages. The representation $\Ind_P^G(\sigma_M \otimes \mu)$ contains the type $\mathrm{st}$, again an easy argument using induction in stages, and it is irreducible by 
the result of Rodier \cite{Ro}.  

\vskip 5pt 

Since, for any $w$ in the Weyl group, 
$\Ind_B^G(\chi)$ and $\Ind_B^G(\chi^w)$  have the same subquotients, we can assume that for all $\chi_n$ we have $\Re (m_{\alpha})\geq 0$, for $m_{\alpha}$ corresponding to $\chi_n$.  
Moreover, since $\sigma$ is the limit of $\pi_n$ and  $\sigma$ is contained in $\Ind_B^G(\chi_0)$, where where $\chi_0$ satisfies $\chi_0\circ \alpha^{\vee} = |\cdot|$ for all $\alpha \in \Delta$, 
we can assume that $\Re (m_{\alpha})>  0$ for all $\chi_n$.  
Now, as we just discussed, each $\pi_n$ is isomorphic to $\Ind_P^G(\sigma_M, \otimes \mu)$  for a parabolic $P$ containing $B$. Since there are finitely many parabolic 
subgroups containing $B$, the lemma follows.

\end{proof} 

Now we can easily finish the proof of proposition.  Let $r$ be the rank of $G$. Then $\Ind_B^G(\chi_0)$ has $2^r$ irreducible suqbquotients. Their exponents are known, and thus 
the asymptotics of their matrix coeffcients. It follows that the matrix coefficients of subquotients, different from the Steinberg representation, are not decaying at infinity. Thus, by Howe-Moore 
the only unitarizable subquotients of $\Ind_B^G(\chi_0)$ are the trival and the Steinberg representations. Thus, if $r> 1$  then 
$\Ind_P^G(\sigma_M\otimes \mu_0)$ has a non-unitary subquotient. On the other hand, since $\Ind_P^G(\sigma_M\otimes \mu_0)$ is the limit of the unitary sequence $\pi_n$, all its subquotients 
are unitary, by a result of Mili\v ci\' c \cite[Thm. 6]{Mi}; see also \cite{Ta1}. This is a contradiction. 
\end{proof} 

\vskip 5pt 

\subsection{Weak containment} 


\vskip 5pt 
\begin{definition} \label{D:weak} 
Let $\pi$ and $\sigma$ be two unitary representations of $G$. We say that $\sigma$ is weakly contained in $\pi$ if every normalized 
matrix coefficient $\langle\sigma(g) v,v\rangle$ of $\sigma$, that is $||v||=1$,  can be approximated uniformly, on compact subsets of $G$, by convex combinations of normalized 
matrix coefficients of $\pi$.   
\end{definition}

\begin{prop} \label{P:weak} 
 Let $\pi_1, \pi_2, \ldots$ be a sequence of irreducible unitary representations of $G$. Let $\pi = \hat\oplus_{i=1}^{\infty} \pi_i$. Let 
$\sigma$ be an irreducible unitary representation of $G$. If $\sigma$ is weakly contained in $\pi$ then 
$\sigma$ is a limit of a subsequence of $\pi_n$.  
\end{prop}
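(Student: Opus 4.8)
The plan is to translate everything into the language of states on the group $C^*$-algebra $A = C^*(G)$ and then exploit the fact that a matrix coefficient of an irreducible representation is a \emph{pure} state, i.e.\ an extreme point of the relevant compact convex set. Concretely: $G$ is second countable, so $A$ is separable; normalized continuous positive-definite functions on $G$ correspond to states of $A$, and by Raikov's theorem (cf.\ \cite[Appendix F]{BHV}) under this correspondence uniform convergence on compact subsets of $G$ matches the weak-$*$ topology. I will work inside the set $Q$ of positive functionals on $A$ of norm at most $1$ (the state space of the unitization $\tilde A$): it is convex, weak-$*$ compact and, since $A$ is separable, metrizable, and its extreme points are $0$ together with all pure states of $A$. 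For an irreducible unitary $\rho$, I write $P_\rho \subset Q$ for the set of pure states whose GNS representation is $\rho$; these are exactly the normalized diagonal matrix coefficients of $\rho$.

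The first step is to unwind what weak containment of $\sigma$ in $\pi = \hat\oplus_i \pi_i$ says in these terms. A normalized diagonal matrix coefficient of $\pi$ attached to a unit vector $w = (w_i)_i$ is the countable convex combination $\sum_i \|w_i\|^2 \langle\pi_i(\cdot)\hat w_i,\hat w_i\rangle$ of elements of $\bigcup_i P_{\pi_i}$, and truncating this sum introduces an error that is uniform on all of $G$; hence any convex combination of matrix coefficients of $\pi$ is, up to arbitrarily small uniform error, a finite convex combination of elements of $\bigcup_i P_{\pi_i}$. Fixing a unit vector $v \in H_\sigma$ and the corresponding pure state $\phi = \langle\sigma(\cdot)v,v\rangle \in P_\sigma$, Definition \ref{D:weak} together with Raikov's theorem then says precisely that $\phi$ lies in the weak-$*$ closed convex hull $K := \overline{\mathrm{conv}}(F)$ of the weak-$*$ compact set $F := \overline{\bigcup_i P_{\pi_i}} \subseteq Q$.

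The second step is the extreme-point argument, which is the crux. Since $\phi$ is a pure state it is an extreme point of $Q$, hence also of the smaller convex set $K \ni \phi$; by Milman's theorem (the partial converse to Krein--Milman), $\mathrm{ext}(K) \subseteq F$, so $\phi \in F$. Because $Q$ is metrizable, $\phi$ is then the weak-$*$ limit of a sequence $\phi_k \in \bigcup_i P_{\pi_i}$, say $\phi_k \in P_{\pi_{n_k}}$; invoking Raikov's theorem once more, the associated normalized matrix coefficients of $\pi_{n_k}$ converge to $\phi$ uniformly on compacta, and (passing to a subsequence so that the $n_k$ increase, the degenerate case in which $\sigma$ is weakly equivalent to one of the $\pi_n$ being immediate) this exhibits $\sigma$ as a limit of a subsequence of $(\pi_n)$ in the sense recalled before Proposition \ref{P:isolated}.

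The main obstacle is making the second step airtight: one must be sure that $\phi$ really sits in a \emph{compact} convex set whose extreme points are controlled, which is why I pass to the quasi-state space $Q$ of $A$ rather than the state space itself (which is not weak-$*$ compact, $G$ being non-discrete), and why the equivalence between local uniform convergence of positive-definite functions and weak-$*$ convergence has to be used in both directions. The remaining points — that matrix coefficients of irreducibles are pure states with GNS representation the given one, that a normalized matrix coefficient of a Hilbert-space direct sum is a convex combination of matrix coefficients of the summands, and the bookkeeping with the indices $n_k$ — are routine.
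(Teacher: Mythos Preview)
Your proof is correct and takes essentially the same approach as the paper: both identify the matrix coefficient of $\sigma$ as an extreme point of the closed convex hull of the normalized matrix coefficients of the $\pi_n$, invoke Milman's partial converse to Krein--Milman to place it in the closure of the generating set, and then appeal to Raikov's theorem to pass from weak-$*$ convergence to uniform convergence on compacta. The only cosmetic difference is that the paper works in the unit ball of $L^\infty(G)$ (extracting a convergent sequence by an ad hoc argument specific to $p$-adic groups, using that smooth vectors are fixed by a compact open subgroup) whereas you work in the quasi-state space of $C^*(G)$ and use its metrizability, which is arguably cleaner.
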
 
\begin{proof}
Recall that 
$L^{\infty}(G)$ is the dual space of $L^1(G)$, and the unit ball in $L^{\infty}(G)$ is compact in the weak-$\ast$ topology. 
Let $\mathcal C$ be the weak-$\ast$ closure, in $L^{\infty}(G)$, of the convex hull of 
\[ 
\mathcal F =\{ \langle \pi_n(g) v_n, v_n \rangle ~| ~ v_n \in \pi_n, ||v_n||=1, n=1,2 \ldots \}.  
\] 
Then $\mathcal C$ is closed and convex subset of the unit ball, in particular, it is  compact by Alaoglu's Theorem \cite[page 130]{Co}. 
Recall that an element of a convex set is extremal if it is not a proper convex combination of two other 
elements of the set. By   \cite[Thm. C.5.2]{BHV}, 
elements of $\mathcal F$ are extremal in $\mathcal C$.  (That theorem states that normalized matrix coefficients associated to an irreducible representation are 
extremal in a larger convex set.)  By our assumption, $f(g)= \langle \sigma(g) v, v \rangle$, for $v\in \sigma$ of norm 1, is contained in $\mathcal C$. Thus it is also an extremal point of $\mathcal C$ 
since $\sigma$ is irreducible. 
By the Krein-Milman Theorem \cite[page 141]{Co}, any extremal point in $\mathcal C$ is in the 
closure of $\mathcal F$.  In particular, $f$ is 
a limit of a sequence $f_n(g)=\langle \pi_n(g) v_n, v_n \rangle$ in $\mathcal F$.  Of course, the limit is in the  weak-$\ast$ topology, however, by a theorem of Raikov  
\cite[Thm. C.5.6]{BHV}, 
the sequence $f_n$ converges uniformly on compact sets to $f$.  This is banal for $p$-adic groups. Indeed, we can assume that $v$ and all $v_n$ are $K$-fixed for a small 
open compact subgroup of  $G$.  Thus these functions are constant on $K$-double cosets, and uniform convergence on compact sets is equivalent to point-wise convergence. Thus, if $g\in G$ 
and $\chi_g$ is the characteristic function of $KgK$ divided by the volume of $KgK$ volume,  then we have 
\[ 
f_n(g) = \int_G f_n \cdot  \chi_{g}  \rightarrow \int_G f \cdot  \chi_{g} = f(g) 
\] 
since $f_n \rightarrow f$ in the weak-$\ast$ topology.  

\end{proof}

\begin{cor} \label{C:weak} 
 Let $\pi_1, \pi_2, \ldots$ be a sequence of irreducible unitary representations of $G$. Let $\pi = \hat\oplus_{i=1}^{\infty} \pi_i$. Let 
$\sigma$ be an irreducible unitary representation of $G$ isolated in the unitary dual. If $\sigma$ is weakly contained in $\pi$ then 
$\sigma\cong \pi_i$ for some $i$.  
\end{cor}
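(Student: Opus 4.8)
The plan is to obtain this as an immediate consequence of Proposition \ref{P:weak}. First I would apply that proposition: since $\sigma$ is weakly contained in $\pi = \hat\oplus_{i=1}^\infty \pi_i$, it is a limit of a subsequence $\pi_{n_1}, \pi_{n_2}, \ldots$, meaning (in the sense made precise just before Proposition \ref{P:isolated}) that one, hence every, normalized diagonal matrix coefficient $\langle \sigma(g)v,v\rangle$ of $\sigma$ is approximated uniformly on compact subsets of $G$ by normalized matrix coefficients $\langle \pi_{n_k}(g) v_k, v_k\rangle$ of the $\pi_{n_k}$.

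The second step is to recall that, for irreducible unitary representations, this is precisely convergence $\pi_{n_k} \to \sigma$ in the Fell topology on the unitary dual $\hat G$: a sequence of irreducibles converges to the irreducible $\sigma$ in $\hat G$ exactly when some (equivalently, every) normalized function of positive type associated to $\sigma$ is a uniform-on-compacta limit of normalized matrix coefficients of the $\pi_{n_k}$ (see \cite[Appendix F]{BHV}). So the output of Proposition \ref{P:weak} is literally the assertion that $\pi_{n_k}\to \sigma$ in $\hat G$.

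Finally, since $\sigma$ is isolated in $\hat G$ by hypothesis, the singleton $\{\sigma\}$ is open in the Fell topology, so any sequence converging to $\sigma$ is eventually constant at $\sigma$; hence $\pi_{n_k}\cong \sigma$ for all sufficiently large $k$, and in particular $\sigma\cong\pi_i$ for some $i$. I do not expect any genuine obstacle here: the statement is a formal corollary of Proposition \ref{P:weak}, and the only point requiring a little care is identifying the ad hoc notion of ``limit of a subsequence'' used there with convergence in the Fell topology, which is routine.
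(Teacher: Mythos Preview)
Your proposal is correct and is exactly the argument the paper intends: the corollary is stated without proof because it follows immediately from Proposition \ref{P:weak} together with the meaning of ``isolated'' in the Fell topology, which is precisely the deduction you wrote out.
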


\vskip 10pt

\section{\bf Appendix C: A Weak LLC for $\PGSp$}  \label{S:weakLLC}
We have mentioned that the ideas involved in the construction of the Spin lifting  in \S \ref{S:triality} via  an application of triality and the results of Kret-Shin allow one to construct a weak LLC for $\PGSp$ refining the results of B. Xu in this specific case. In this appendix, let us give an outline of this, explaining in particular what ``weak" means.  The reader will notice that the general structure is very similar to our treatment of the LLC for $G_2$.  
\vskip 5pt

\subsection{\bf Weak equivalence of L-parameters} 
In  \S \ref{SS:Spin-KS}, we have encountered the natural map
\[  {\rm std}_* \times \spin_*: \Phi(\PGSp) \longrightarrow \Phi(\Sp) \times \Phi(\GL_8). \]
As mentioned there, this map is not injective, because $\Spin_7(\C)$ is not acceptable in the sense of Larsen \cite{CG1}.  On the other hand, the standard and Spin L-functions of $\phi \in \Phi(\PGSp)$ is completely determined by its image in $\Phi(\Sp) \times \Phi(\GL_8)$. This motivates us to define the coarser equivalence relation on L-parameters:
\vskip 5pt

\begin{definition}
Two elements $\phi_1, \phi_2 \in \Phi(\PGSp)$ are said to be weakly equivalent if 
\[  ({\rm std}_*(\phi_1), \spin_*(\phi_1)) = ({\rm std}_*(\phi_2), \spin_*(\phi_2)) \in \Phi(\Sp) \times \Phi(\GL_8). \]
We denote the set of weak equivalence classes by $\Phi_w(\PGSp)$. 
\end{definition}
\vskip 5pt

One has \cite[\S 1]{CG1}:
\vskip 5pt

\begin{lemma}
The following are equivalent for two elements $\phi_1$ and  $\phi_2$ of $\Phi(\PGSp)$:
\vskip 5pt
\begin{itemize}
\item $\phi_1$ and $\phi_2$ are weakly equivalent;
\item $\phi_1$ and $\phi_2$ are elementwise conjugate.
\end{itemize}
\end{lemma}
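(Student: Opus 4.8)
The implication ``elementwise conjugate $\Rightarrow$ weakly equivalent'' is formal. If $\phi_1(g)$ and $\phi_2(g)$ are $\Spin_7(\C)$-conjugate for every $g\in WD_F$, then $\std\circ\phi_1$ and $\std\circ\phi_2$ (resp.\ $\spin\circ\phi_1$ and $\spin\circ\phi_2$) have the same trace function on $WD_F$; being composites of L-parameters with algebraic representations, all four are Frobenius-semisimple, so equality of traces gives isomorphism of representations, i.e.\ ${\rm std}_*(\phi_1)={\rm std}_*(\phi_2)$ and $\spin_*(\phi_1)=\spin_*(\phi_2)$.

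For the converse, the plan is to normalise the pair $(\phi_1,\phi_2)$ and reduce to a statement about single elements of $\Spin_7(\C)$. Assume $\phi_1,\phi_2$ are weakly equivalent. By definition of $\Phi(\Sp)$, the equality ${\rm std}_*(\phi_1)={\rm std}_*(\phi_2)$ means $\std\circ\phi_2=h\,(\std\circ\phi_1)\,h^{-1}$ for some $h\in\SO_7(\C)$; lifting $h$ to $\Spin_7(\C)$ and replacing $\phi_2$ by the corresponding conjugate, I may assume $\std\circ\phi_1=\std\circ\phi_2$ as maps $WD_F\to\SO_7(\C)$. Then $\phi_1(g)$ and $\phi_2(g)$ have the same image in $\SO_7(\C)$, so $\phi_2(g)=\epsilon(g)\phi_1(g)$ for a homomorphism $\epsilon\colon WD_F\to\mu_2=Z(\Spin_7(\C))$, which is trivial on the Deligne $\SL_2$; since $\mu_2$ acts by $-1$ on the spin representation this gives $\spin\circ\phi_2\cong(\spin\circ\phi_1)\otimes\epsilon$. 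Hence the hypothesis $\spin_*(\phi_1)=\spin_*(\phi_2)$ says precisely that the quadratic character $\epsilon$ stabilises the $8$-dimensional representation $\spin\circ\phi_1$, and it remains to deduce from this that $\phi_1(g)\sim\epsilon(g)\phi_1(g)$ in $\Spin_7(\C)$ for every $g$.

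Fix such a $g$ with $\epsilon(g)=z$, the nontrivial central element, and put $u=\phi_1(g)$; I must show $u$ and $zu$ are conjugate in $\Spin_7(\C)$. Applying the stabilisation to the powers $g^k$ shows that $\spin(u^k)$ and $(-1)^k\spin(u^k)$ are $\GL_8(\C)$-conjugate, so, taking $k$ odd, the eigenvalue multiset of $\spin(u_s)$ is stable under negation, where $u_s$ is the semisimple part of $u$; equivalently $\spin(u_s)$ and $\spin(zu_s)$ are $\GL_8(\C)$-conjugate. Since the spin representation of $\Spin_7(\C)$ is minuscule and faithful, its restriction to a maximal torus identifies semisimple conjugacy classes with $W(B_3)$-orbits of such weight multisets, and a finite case analysis shows that a permutation realising the negation can always be taken inside $W(B_3)$, whence $u_s\sim zu_s$ in $\Spin_7(\C)$. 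Passing from $u_s$ to $u$ then follows from the connectedness of $M=Z_{\Spin_7(\C)}(u_s)$ (Steinberg) together with the fact that unipotent classes in type $B$, hence in $M$, are separated by their Jordan type under $\std$, which is unchanged by the central twist. This group-theoretic argument is the content of \cite[\S 1]{CG1}.

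The hard part is exactly the semisimple step above: the standard representation detects the semisimple part of $\phi_i(g)$ only up to the central $\mu_2$, and one must exploit how the spin representation interacts with the central twist and with squaring to show that whatever ambiguity survives is always a genuine \emph{elementwise} conjugacy. This is the point at which the (otherwise obstructive) failure of acceptability of $\Spin_7(\C)$ is isolated, and it is the technical core of \cite[\S 1]{CG1}; I would simply invoke that result.
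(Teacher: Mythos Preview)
The paper does not give its own proof of this lemma; it simply cites \cite[\S 1]{CG1}. Your sketch therefore goes beyond what the paper offers, but it is aligned with the paper's approach in that you correctly isolate the group-theoretic core and ultimately defer to \cite{CG1} for it. The reduction you describe --- conjugating so that $\std\circ\phi_1=\std\circ\phi_2$, whence $\phi_2=\epsilon\phi_1$ for a quadratic character $\epsilon$, and then reducing elementwise conjugacy to the statement that $\spin(u)\sim -\spin(u)$ in $\GL_8(\C)$ forces $u\sim zu$ in $\Spin_7(\C)$ --- is the right shape.

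Two minor remarks on the sketch itself. First, the appeal to powers $g^k$ is not needed for the semisimple step: already at $k=1$ the conjugacy $\spin(u)\sim -\spin(u)$ in $\GL_8(\C)$ gives negation-stability of the eigenvalue multiset of $\spin(u_s)$. Second, the passage from $u_s$ to $u$ is slightly more delicate than you indicate: the centralizer $M=Z_{\Spin_7(\C)}(u_s)$ need not itself be of type $B$, and ``Jordan type under $\std$'' alone does not a priori separate unipotent $M$-classes. One really needs that the restrictions of both $\std$ and $\spin$ to $M$ (equivalently, the Jordan types on each eigenspace of $u_s$) together separate unipotent classes in $M$; this is part of the finite case analysis you are attributing to \cite{CG1}, and it is reasonable to leave it there.
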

\vskip 5pt
Our goal in this appendix is to construct  a canonical map 
\[  \mathcal{L}_w:  \Irr(\PGSp) \longrightarrow \Phi_w(\PGSp) \]
satisfying some desirable properties.
\vskip 5pt

\subsection{\bf Theta dichotomy}
We shall make use of the similitude theta correspondence associated to the following dual pairs:
\[
\xymatrix@R=2pt{
&&\PGO_8\\
&\PGSp \ar@{-}[ru]\ar@{-}[ld]\ar@{-}[rd]&\\
\PGO_{5,1}&&\PGO_6 
}
\]
Here, the orthogonal similitude groups in the right tower are split, whereas the one on the left has $F$-rank $1$ and is isomorphic to $\PGL_2(D) \rtimes \Z/2\Z$ where $D$ is the quaternion division $F$-algebra.  
\vskip 5pt

As a consequence of the conservation relation \cite{SZ},  any $\sigma \in {\rm Irr}(\PGSp)$ has a nonzero theta lift to exactly one of $\PGO_8$ or $\PGO_{5,1}$. This gives a partition
\[  \Irr(\PGSp)  = \Irr^{\heartsuit}(\PGSp)  \bigsqcup  \Irr^{\spadesuit}(\PGSp)  \]
where the elements in $\Irr^{\heartsuit}(\PGSp)$ are those with nonzero theta lifts to $\PGO_8$.  Among the representations in $\Irr^{\heartsuit}(\PGSp)$, some will participate in the theta correspondence with the lower step of the tower, i.e. with $\PGO_6 = \PGL_4(F) \rtimes \Z/2\Z$. Thus, we have a further decomposition
\[  \Irr^{\heartsuit}(\PGSp) = \Irr^{\diamondsuit}(\PGSp)  \sqcup  \Irr^{\clubsuit}(\PGSp)  \]
where $\Irr^{\clubsuit}(\PGSp)$ denotes the subset of those representations which has nonzero theta lifting to $\PGO_6$. It will be better to group $\Irr^{\spadesuit}(\PGSp)$ and $\Irr^{\clubsuit}(\PGSp)$ together, denoting their union by $\Irr^{\spadesuit\clubsuit}(\PGSp)$.
\vskip 5pt

In short, the similitude theta correspondence gives maps:
\[  \begin{CD}
\theta^{\spadesuit\clubsuit}:    \Irr^{\spadesuit\clubsuit}(\PGSp)  @>>>  \Irr(\PGL_2(D))_{/\Z/2\Z} \sqcup \Irr(\PGL_4)_{/ \Z/2\Z}  @>{\rm JL}>> \Irr(\PGL_4)_{/ \Z/2\Z}.
\end{CD} \]
and
\[  \begin{CD}
\theta^{\diamondsuit}:  \Irr^{\diamondsuit}(\PGSp)  @>>>  \Irr^{\heartsuit}(\PGSp)  @>\theta^{\heartsuit}>> \Irr(\PGO_8)  @>>> \Irr(\PGSO_8)_{\Z/2\Z}. \end{CD} \] 
The latter map is in fact injective and takes value in the $\Z/2\Z$-fixed points in $\Irr(\PGSO_8)$.
\vskip 10pt

\subsection{\bf Spin lifting}
We have defined the Spin lifting of generic representations of $\PGSp$ in \S \ref{SS:spin-lift}. In fact, this definition can be extended to the whole of $\Irr(\PGSp)$ to give:
\[   \spin_*:  \Irr(\PGSp)  \longrightarrow \Irr(\GL_8).\]
 We consider two cases, according to the decomposition of $\Irr(G)$. 
\vskip 5pt

\noindent (a) if $\sigma \in \Irr^{\heartsuit}(\PGSp)$, we set:
\[  f_2^*(\theta^{\heartsuit}(\pi)) \in \Irr(\SO_8)_{/\PGSO_8 } \]
and
\[ \spin_*(\sigma) =  \mathcal{A}\left(  f_2^*(\theta^{\heartsuit}(\pi)) \right) \in \Irr(\GL_8). \]
 
 Here, we refer to \S \ref{SS:triality} for the definition of $f_2$. This definition of $\spin_*$ is the same as that given in \S \ref{SS:spin-lift} for generic representations.
 \vskip 10pt

\noindent (b) if $\sigma \in \Irr^{\spadesuit \clubsuit}(\PGSp)$, then recall that we have the theta lifting map
\[   \theta^{\spadesuit\clubsuit}:    \Irr^{\spadesuit \clubsuit}(G)    \longrightarrow \Irr(\PGL_4)_{/ \Z/2\Z}.
\]
Moreover, one has a commutative diagram
\[  \begin{CD}  
 \Irr(\PGL_4)_{/ \Z/2\Z} @>>>  \Phi(\PGL_4)_{/ \Z/2\Z}  \\
 @V{\boxplus}VV  @VV{\oplus}V \\ 
\Irr(\GL_8)  @>\mathcal{L}_{\GL_8}>>  \Phi(\GL_8)  
\end{CD} \]
where the horizontal arrows are LLC maps, the first vertical arrow is the isobaric sum and the second vertical arrow is the map sending $(\phi, \phi^{\vee})$ to 
$\phi \oplus \phi^{\vee}$. More precisely, $\boxplus$ is given concretely as:
\[  \boxplus: (\tau, \tau^{\vee}) \mapsto \text{the unique constituent of $\Ind_P^{\GL_8} (\tau \boxtimes \tau^{\vee})$ with $L$-parameter $\phi_{\tau} \oplus \phi_{\tau}^{\vee}$}, \]
where $P$ is the maximal parabolic subgroup of $\GL_8$ with Levi factor $\GL_4 \times \GL_4$.
\vskip 5pt

 We may now set:
 \[  \spin_*(\sigma) =  \boxplus \circ \theta^{\spadesuit \clubsuit}(\sigma)  \in \Irr (\GL_8). \]
 The representations in $\Irr^{\clubsuit}(\PGSp)$ have been treated in both (a) and (b) above. One can check that the two definitions of $\spin_*$  agree on $\Irr^{\clubsuit}(\PGSp)$. Thus, we have now a well-defined map
\[  \begin{CD} 
\spin_*: \Irr(\PGSp) @>>>\Irr(\GL_8). \end{CD} \]

\vskip 5pt

\subsection{\bf Kret-Shin parameters} 
 Now a key result in the construction of our map $\mathcal{L}_w$ is the following:
 
\begin{thm}  \label{T:exist}
Given $\sigma \in \Irr(\PGSp)$, there exists a unique $\phi \in \Phi_w(\PGSp)$ such that
\[  
\std \circ \phi  = \mathcal{L}_{\Sp}({\rm rest}(\sigma))  \]
and
\[  \spin \circ \phi = \mathcal{L}_{\GL_8}(  \spin_*(\sigma)). \]
Here we recall that ${\rm rest}: \Irr(\PGSp) \longrightarrow \Irr(\Sp)_{/\PGSp}$ is the restriction of representations from $\PGSp$ to $\Sp$.
\end{thm}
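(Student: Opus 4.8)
The plan is to reduce Theorem \ref{T:exist} to the generic case treated in Proposition \ref{P:KS} (and its uniqueness refinement in the generic discrete series case) by exploiting the decomposition $\Irr(\PGSp) = \Irr^{\diamondsuit}(\PGSp) \sqcup \Irr^{\clubsuit}(\PGSp) \sqcup \Irr^{\spadesuit}(\PGSp)$ coming from theta dichotomy, and then running a global argument à la Kret--Shin for the ``interesting'' part. First I would dispose of \textbf{existence}. For a general $\sigma$, one globalizes $\sigma$ to a cuspidal automorphic representation $\Sigma$ of $\PGSp$ over a totally real field $k$ with $\Sigma_w \cong \sigma$, a Steinberg local component at an auxiliary place $u$, a sufficiently regular discrete series at the archimedean places, and a generic supercuspidal component at a third place $u'$; this is Proposition \ref{P:period_global_2} together with Lemma \ref{L:isolation}, with the caveat that we must first arrange $\Sigma$ to be globally generic (replacing $\sigma$ by a representation in the same Xu packet via statement (b) of \S\ref{S:Xu} if necessary, and keeping track of the twist). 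The Kret--Shin construction \cite[Thm. A]{KS} then attaches a $\Spin_7(\overline{\Q}_l)$-valued Galois representation $\rho_{\Sigma}$, and local-global compatibility \cite{TY, Ca} at $w$ produces $\phi \in \Phi(\PGSp)$ with $\std \circ \phi = \mathcal{L}_{\Sp}(\mathrm{rest}(\sigma))$ and $\spin \circ \phi = \mathcal{L}_{\GL_8}(\spin_*(\sigma))$, exactly as in the proof of Proposition \ref{P:KS}. For the non-generic $\sigma$ one must also check that $\spin_*$ is compatible under passage between a Xu packet and its generic member, which follows from the explicit theta computations and the definitions of $\spin_*$ on the two strata $\Irr^{\heartsuit}$ and $\Irr^{\spadesuit\clubsuit}$ in \S\ref{S:weakLLC}.

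Next comes \textbf{uniqueness}, which is the heart of the matter and where the notion of weak equivalence is essential. By construction, $\std \circ \phi$ and $\spin \circ \phi$ are pinned down; the only ambiguity is that $\Spin_7(\C)$ is not acceptable, so two honest $\Spin_7$-parameters with the same standard and spin composites need not be $\Spin_7(\C)$-conjugate. But in $\Phi_w(\PGSp)$ we have quotiented precisely by this ambiguity: by the Lemma recalled just before the theorem, $\phi_1 \sim_w \phi_2$ iff they are elementwise conjugate, iff $(\std \circ \phi_1, \spin \circ \phi_1) = (\std \circ \phi_2, \spin \circ \phi_2)$. Hence uniqueness in $\Phi_w(\PGSp)$ is automatic once existence is known --- any two solutions have, by definition of being solutions, the same image in $\Phi(\Sp) \times \Phi(\GL_8)$, hence represent the same weak class. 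So uniqueness is essentially formal; the real content is existence, i.e. the non-obvious fact that there exists an \emph{actual} $\Spin_7(\C)$-valued parameter (not merely a compatible pair of a standard and a spin parameter) realizing the prescribed pair. This is exactly what Kret--Shin supplies on the global side, and local-global compatibility transports to the local place $w$.

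The main obstacle I anticipate is arranging a \emph{globally generic} cuspidal globalization with the prescribed local component $\sigma$ at $w$ when $\sigma$ is not itself generic: the globalization machinery in Appendix A (Propositions \ref{P:period_global}, \ref{P:period_global_2}) naturally produces generic automorphic representations, and Kret--Shin's theorem requires a generic discrete series at the archimedean places and a Steinberg component. The way around this is to not globalize $\sigma$ directly, but rather to globalize the (unique, by statement (b) of \S\ref{S:Xu}) representation $\sigma^{\mathrm{gen}}$ in the same Xu packet as $\sigma$ that restricts to the generic member of the $\Sp$-packet $\Pi_{\phi^\flat}$ where $\phi^\flat = \mathcal{L}_{\Sp}(\mathrm{rest}(\sigma))$; since $\sigma$ and $\sigma^{\mathrm{gen}}$ have the same restriction to $\Sp$ up to $\PGSp$-conjugacy and the same spin lift up to a twist by a quadratic character (by Proposition \ref{P:key}(ii) and the fact that members of a Xu packet differ by no twist on restriction), the pair $(\std \circ \phi, \spin \circ \phi)$ computed from $\sigma^{\mathrm{gen}}$ differs from that of $\sigma$ only in a controlled way, and one recovers the parameter for $\sigma$ by an explicit quadratic twist. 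A secondary technical point --- verifying the isolation hypothesis needed for Proposition \ref{P:period_global_2}, and the near-equivalence computations at unramified places comparing $\spin_*$ of the globalization with the spin composite of $\rho_\Sigma$ --- is routine given \cite{ILM} and the theta correspondence of unramified representations, exactly as in \S\ref{S:KS}.
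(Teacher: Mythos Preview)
Your treatment of uniqueness is correct and matches the paper: once existence is known, uniqueness in $\Phi_w(\PGSp)$ is tautological from the definition of weak equivalence.

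The existence argument, however, has a genuine gap in the non-generic discrete series case. Your plan is to pass from $\sigma$ to the generic member $\sigma^{\mathrm{gen}}$ of its Xu packet, produce a Kret--Shin parameter for $\sigma^{\mathrm{gen}}$, and then transport it back to $\sigma$. The problem is the transport step. Members of a \emph{single} Xu packet are not quadratic twists of one another (it is the \emph{different} Xu packets inside $\tilde{\Pi}_{\phi^{\flat}}$ that are related by quadratic twists, see statement (a) of \S\ref{S:Xu}), so Proposition~\ref{P:key}(ii) does not apply. The equality $\mathcal{L}_{\Sp}(\mathrm{rest}(\sigma)) = \mathcal{L}_{\Sp}(\mathrm{rest}(\sigma^{\mathrm{gen}}))$ is fine, but you have no a priori control over $\spin_*(\sigma)$ versus $\spin_*(\sigma^{\mathrm{gen}})$: nothing established before this point tells you these agree, and in fact showing that $\spin_*$ is constant on a Xu packet is essentially a consequence of the theorem you are trying to prove (it is how the paper later identifies fibers of $\mathcal{L}_w$ with unions of Xu packets). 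So the reduction is circular.

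The paper avoids this by globalizing $\sigma$ itself, not $\sigma^{\mathrm{gen}}$. The key point is that one does \emph{not} need a globally generic globalization of $\sigma$; one only needs a cuspidal $\Sigma$ with $\Sigma_{v_0}\cong\sigma$, a Steinberg local component, and \emph{generic discrete series at the archimedean places} (this is what Kret--Shin require). Such a $\Sigma$ is produced by a different mechanism (Lemma~\ref{L:global}): first globalize $\sigma$ by Shin's Plancherel density theorem \cite{S}, then use Arthur's multiplicity formula on $\Sp$ (trivial global component group, thanks to the Steinberg place) to swap the archimedean components of the restriction for generic ones, and finally lift back to $\PGSp$ and adjust by a global quadratic twist. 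One then checks directly (Lemma~\ref{L:theta}, via the Rallis inner product formula) that the global theta lift $\Theta(\Sigma)$ to $\PGSO_8$ is nonzero cuspidal, using only that $\sigma\in\Irr^{\heartsuit}(\PGSp)$ has nonzero \emph{local} theta lift. Kret--Shin plus local-global compatibility then yields $\phi$ for $\sigma$ with no detour through $\sigma^{\mathrm{gen}}$. The case $\sigma\in\Irr^{\spadesuit\clubsuit}(\PGSp)$ is handled separately and purely locally, via the inclusion $j:\Spin_6(\C)\hookrightarrow\Spin_7(\C)$ applied to the $\PGL_4$-parameter of $\theta^{\spadesuit\clubsuit}(\sigma)$.
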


Proposition \ref{P:KS} is a special case of this for $\sigma \in \Irr_{gen, ds}(\PGSp)$. Recall that we have called a $\phi$ as in the Theorem a Kret-Shin parameter of $\sigma$.

\vskip 5pt
\begin{proof}
To prove the Theorem, we consider two cases:
\vskip 5pt

\begin{itemize} 
\item If $\sigma\in \Irr^{\spadesuit \clubsuit}(\PGSp)$, then  our discussion in Case (b)  in the previous subsection already gives the desired $\phi$. In the notation there, we set
 \[  \phi = j \circ \phi_{\theta^{\spadesuit\clubsuit}(\sigma)}, \]
 where 
 \[  j:   \PGL_4^{\vee} = \Spin_6(\C) \longrightarrow \Spin_7(\C). \]
 This satisfies $\spin \circ \phi =  \mathcal{L}_{\GL_8}(  \spin_*(\sigma))$. Moreover, by the theory of theta correspondence, it is easy to see that 
 \[  \std \circ \phi = \mathcal{L}_{\SO_6}(\theta^{\spadesuit\clubsuit}(\sigma)) \oplus 1  =     \mathcal{L}_{\Sp_6}({\rm rest}(\sigma)).  \] 
\vskip 5pt

\item Suppose now that $\sigma \in \Irr^{\heartsuit}(\PGSp)$.  If $\sigma$ is not a discrete series representation, then  one can determine its local theta lift to $\PGSO_8$ completely (using the fact that the LLC for Levi subgroups of $\PGSp$ and $\PGSO_8$ are known).  From this knowledge, one can write down the desired $\phi \in \Phi(\PGSp)$: it will factor through a Levi subgroup of $\Spin_7(\C)$. We omit the details here.
\vskip 5pt

The key case is that of discrete series representations. Here the proof is along the same lines  as that of Proposition \ref{P:KS}, where we demonstrated this for generic discrete series representations. To carry out the same proof, for $\sigma \in \Irr_{ds}(\PGSp)$, one needs to globalize $\sigma$ appropriately. We discuss this globalization below.
\end{itemize}
 
\vskip 5pt

\subsection{\bf Globalization}

 Let $k$ be a totally real number field with adele ring $\mathbb{A}$ and a place $v_0$ such that $k_{v_0} = F$, and let $v_1\ne v_0$ be another finite place. Consider  the split group $\PGSp$ over $k$.  We have:
 \vskip 5pt
 
 \begin{lemma}  \label{L:global}
 There is a cuspidal automorphic representation $\Sigma$ of $\PGSp$ satisfying:
 \vskip 5pt
 \begin{itemize}
 \item $\Sigma_{v_0} = \sigma$;
 \item $\Sigma_{v_1} = {\rm St}_{v_1}$ is the Steinberg representation;
 \item for each real place $v_{\infty}$ of $k$, $\Sigma_{v_{\infty}}$ is a generic discrete series representation with very regular infinitesimal character so that it is spin-regular in the sense of \cite{KS}.;
 \item $\Sigma_v$ is a quadratic twist of an unramified representation for all other $v$.
 \end{itemize}
  \end{lemma}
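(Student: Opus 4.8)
The plan is to construct $\Sigma$ by globalizing the L-parameter of ${\rm rest}(\sigma)$ on the general linear group and then descending to $\PGSp$ through the endoscopic classifications of Arthur \cite{A} and of Xu \cite{Xu1,Xu2,Xu3}. Write $\phi^{\flat}\in\Phi_{ds}(\Sp)$ for the L-parameter of ${\rm rest}(\sigma)$; it is a discrete parameter valued in $\SO_7(\C)$, hence a multiplicity-free self-dual $7$-dimensional representation of $WD_F$ with trivial determinant, and, since $7$ is odd, it is automatically of orthogonal type. First I would produce a self-dual cuspidal automorphic representation $\mu$ of $\GL_7$ over $k$ --- necessarily of orthogonal type, as $\dim\mu=7$ is odd --- with trivial central character, whose L-parameter at $v_0$ is $\phi^{\flat}$, whose component at $v_1$ is the Steinberg representation, whose component at each real place $v_{\infty}$ is regular algebraic of a weight as regular as one wishes (in particular spin-regular in the sense of \cite{KS}), and which is unramified at every other place. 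This is a standard globalization of the type underlying the results of Kret-Shin: it is obtained from the (twisted, so as to impose self-duality) trace formula for $\GL_7$, with an Euler--Poincar\'e function at the Steinberg place $v_1$ --- a cuspidal function which forces $\mu$ into the cuspidal spectrum with Steinberg component at $v_1$ while making the identity contribution on the geometric side nonzero --- a pseudo-coefficient of the target representation at $v_0$, pseudo-coefficients of discrete series at the archimedean places, and unramified test functions elsewhere; that the pseudo-coefficient at $v_0$ genuinely isolates the representation with parameter $\phi^{\flat}$ uses temperedness of $\mu_{v_0}$, which one reads off from purity of the Galois representation attached to the self-dual regular algebraic $\mu$. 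In particular $\mu$ is a simple generic Arthur parameter, valid for $\Sp$ and, since its central character is trivial, also for $\PGSp$ with values in $\Spin_7(\C)$.

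Next I would descend to $\PGSp$. The structural point is that the global component group attached to any lift $\tilde{\mu}$ of $\mu$ to a $\Spin_7(\C)$-valued parameter is trivial: $\mu$ is irreducible and $-I_7\notin\SO_7(\C)$, so the centralizer of $\mu$ in $\SO_7(\C)$ is trivial, whence the centralizer of $\tilde{\mu}$ in $\Spin_7(\C)$ is exactly $Z(\Spin_7)\cong\mu_2$, which is divided out on passing to $\mathcal{S}_{\tilde{\mu}}$. Consequently Xu's multiplicity formula for $\PGSp$ (property (f) of \S\ref{S:Xu}) carries no sign obstruction, so every element of the restricted tensor product of Xu's local packets $\tilde{\Pi}^{X}_{\tilde{\mu}_v}$ --- the Xu packet singled out at $v$ by the lift $\tilde{\mu}_v$, as in \S\ref{S:app} --- occurs, with multiplicity one, in the cuspidal spectrum of $\PGSp$. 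It then remains to choose the lift $\tilde{\mu}$ --- the lifts of $\mu$ form a torsor under twisting by quadratic Hecke characters, and no global obstruction arises since $\mu$ lifts locally everywhere --- so that $\tilde{\mu}_{v_0}$ is the lift of $\phi^{\flat}$ whose Xu packet contains $\sigma$, $\tilde{\mu}_{v_1}$ is the untwisted Steinberg parameter (so its Xu packet is the singleton $\{{\rm St}\}$), and $\tilde{\mu}_{v_{\infty}}$ is the parameter of the spin-regular large discrete series of $\PGSp(k_{v_{\infty}})$ lying above $\mu_{v_{\infty}}$. Global class field theory furnishes such a $\tilde{\mu}$, the twisting character being ramified at worst at $v_0$ and one auxiliary finite place. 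Then $\Sigma$ is the member of the global packet of $\tilde{\mu}$ equal to $\sigma$ at $v_0$, to the Steinberg representation of $\PGSp$ at $v_1$, to the chosen large discrete series at each archimedean place, and, at every other place, to the unramified representation twisted by the local component of the twisting character --- that is, a quadratic twist of an unramified representation, as required; and $\Sigma$ is cuspidal because $\mu$ is a simple parameter.

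I expect the main obstacle to be the last point --- arranging $\Sigma_{v_0}\cong\sigma$ \emph{exactly}, not merely up to a quadratic twist and not merely within the $\Sp$-packet of $\phi^{\flat}$. This is precisely what the elementary globalization machinery of Appendix A cannot achieve, since it delivers only the generic member of a packet; it is here that Xu's description of the local packets of $\PGSp$ as $\Hom(F^{\times},\mu_2)$-sets, together with the vanishing of the global component group, becomes indispensable --- and that vanishing is exactly what the Steinberg place at $v_1$ buys, by forcing the transfer of $\mu$ to $\GL_7$ to be cuspidal. The remaining ingredients --- the self-dual $\GL_7$ globalization with a Steinberg place and prescribed very regular archimedean weights, the temperedness of $\mu_{v_0}$, and the identification of the archimedean discrete series packets --- are routine in this circle of ideas, though the $\GL_7$ globalization is itself a nontrivial input best imported from the literature surrounding \cite{KS}.
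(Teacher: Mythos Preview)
Your route is genuinely different from the paper's and, as written, has a gap at the very step you flag as the obstacle.

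The paper does not go through $\GL_7$ at all. It globalizes $\sigma$ \emph{directly} on $\PGSp$ using Shin's Plancherel density theorem \cite[Theorem 5.8]{S}, obtaining a cuspidal $\Sigma'$ with $\Sigma'_{v_0}=\sigma$, $\Sigma'_{v_1}={\rm St}$, unramified elsewhere, and discrete series (but not necessarily generic) at the real places. The only defect is archimedean genericity. To fix this, the paper restricts $\Sigma'$ to $\Sp$: the Steinberg component forces a simple generic A-parameter with trivial global component group, so Arthur's multiplicity formula lets one replace the archimedean components of an irreducible summand $\Sigma'^{\flat}\subset\Sigma'|_{\Sp}$ by the generic members of their L-packets, yielding a cuspidal $\Sigma^{\flat}$ on $\Sp$. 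One then picks any cuspidal $\Sigma$ on $\PGSp$ with $\Sigma^{\flat}\subset\Sigma|_{\Sp}$. Because $\Sigma^{\flat}_{v_0}=\Sigma'^{\flat}_{v_0}\subset\sigma|_{\Sp}$, the representation $\Sigma_{v_0}$ is automatically a quadratic twist of $\sigma$, and similarly at $v_1$; a single global quadratic twist finishes.

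Your argument, by contrast, starts on $\GL_7$ and descends. The difficulty is your sentence ``choose the lift $\tilde{\mu}$ \ldots\ so that $\tilde{\mu}_{v_0}$ is the lift of $\phi^{\flat}$ whose Xu packet contains $\sigma$.'' This presupposes a canonical local map from elements of $\tilde{\Phi}_{\phi^{\flat}}$ to Xu packets in $\tilde{\Pi}_{\phi^{\flat}}$. As the paper emphasizes in \S\ref{S:Xu}(e), Xu only shows that these two sets are \emph{noncanonically} isomorphic $\Hom(F^{\times},\mu_2)$-torsors; producing a canonical bijection is precisely the content of the weak LLC being constructed in Appendix~C, so invoking it here is circular. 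Your reference to \S\ref{S:app} does not help: the bijection constructed there is only for parameters factoring through $G_2(\C)$ and relies on the main theorem of the paper, whereas the $\sigma$ in Lemma~\ref{L:global} is an arbitrary discrete series of $\PGSp$.

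Your approach can be repaired, but the repair is exactly the paper's endgame: descend $\mu$ to $\Sp$ via Arthur (not to $\PGSp$ via Xu), choose the member of the global $\Sp$-packet whose $v_0$-component lies in $\sigma|_{\Sp}$, lift arbitrarily to $\PGSp$, and then twist by a global quadratic character. Once you do this, the detour through $\GL_7$ buys nothing over Shin's direct globalization on $\PGSp$, and it costs you a self-dual $\GL_7$ globalization with a prescribed tempered (possibly non-discrete) local component at $v_0$---a result you call ``standard'' but which is already heavier than what the paper uses.
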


 \begin{proof}
 Using \cite[Theorem 5.8]{S}, we can find a cuspidal automorphic representation $\Sigma'$ of $\PGSp$ satisfying:
 \vskip 5pt
 
 \begin{itemize}
 \item $\Sigma'_{v_0} = \pi$;
 \vskip 5pt
 
 \item $\Sigma'_{v_1} = {\rm St}_{v_1}$;
 \vskip 5pt
 
 \item for each real place $v_{\infty}$, $\Sigma'_{v_{\infty}}$ is a discrete series representation with very regular infinitesimal character and is spin-regular.
 
 \vskip 5pt
 
 \item for all other places $v$, $\Sigma'_v$ is unramified.
  \end{itemize}
 The only condition potentially missing from $\Sigma'$ is the genericity of its archimedean components.
 \vskip 5pt
 
 Now the restriction of $\Sigma'$ to $\Sp$ determines a near equivalence class of square-integrable automorphic representations with a generic  A-parameter $\Psi$ of $\Sp$, because of the Steinberg local component.   Moreover, again because of the Steinberg local component, the global component group of the A-parameter $\Psi$ is trivial.  
  As a consequence, every representation of $\Sp(\mathbb{A})$ in the global A-packet attached to $\Psi$ occurs in the space of cusp forms by Arthur's multiplicity formula \cite{A}. In particular,   if we take an irreducible automorphic summand $\Sigma'^{\flat}$ of $\Sigma'|_{\Sp}$, we 
  may replace the archimedean components of  $\Sigma'^{'\flat}$ by the generic discrete series representations in the local A-packets (which are just L-packets) and obtain a cuspidal automorphic representation $\Sigma^{\flat}$ in the same A-packet such  that $\Sigma^{\flat}_f = \Sigma'^{\flat}_f$.  Note also that the central character of $\Sigma^{\flat}$ is the same as that of $\Sigma'^{\flat}$, which is trivial.
 \vskip 5pt
 
 Now let $\Sigma$ be a cuspidal automorphic representation of $\PGSp$ whose restriction to $\Sp$ contains $\Sigma^{\flat}$. Then the archimedean components of $\Sigma$ are generic discrete series which are spin-regular. Moreover, $\Sigma_{v_0} = \pi \otimes \chi_0$ and $\Sigma_{v_1} = {\rm St}_{v_1} \otimes \chi_1$ for some quadratic characters $\chi_0$ and $\chi_1$.    Thus, after replacing $\Sigma$ by its twist  by an automorphic quadratic character $\chi$ such that $\chi_{v_0} = \chi_0$ and $\chi_{v_1} = \chi_1$, we obtain the desired $\Sigma$ as in the Lemma.   
 \end{proof}
 \vskip 10pt
 
 \subsection{\bf Global Theta lift.}
 With the cuspidal automorphic representation $\Sigma$ given by the lemma, we may now 
  consider the global theta lift of $\Sigma$ to the split group $\PGSO_8$ over $k$. We have:
  \vskip 5pt
  
  \begin{lemma}  \label{L:theta}
  The global theta lift $\Theta(\Sigma)$ of $\Sigma$ to $\PGSO_8$ is a nonzero cuspidal automorphic representation.
  \end{lemma}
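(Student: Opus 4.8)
The plan is to establish cuspidality directly on the similitude side and to deduce the nonvanishing by passing to the isometry theta correspondence, following the same pattern as the proof of Proposition~\ref{P:KS}.

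Cuspidality is the easy half. Since $\Sigma$ is cuspidal on $\PGSp$, the tower property of theta lifting expresses the constant terms of $\Theta(\Sigma)$ along the maximal parabolic subgroups of $\PGSO_8$ in terms of the theta lift of $\Sigma$ to $\PGSO_6$, so $\Theta(\Sigma)$ is cuspidal provided this smaller lift vanishes. It does, because $\Sigma_{v_1}\cong{\rm St}$ and the local theta lift of the Steinberg representation of $\PGSp(k_{v_1})$ to the split $\PGSO_6(k_{v_1})$ is zero --- equivalently, the Steinberg $L$-parameter, the principal $\SL_2$ inside $\SO_7(\C)$, does not factor through $\Spin_6(\C)$. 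This is the same mechanism used for cuspidality in the proof of Proposition~\ref{P:KS}.

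For the nonvanishing I would reduce to isometry groups. Let $\Sigma^{\flat}\subset\Sigma|_{\Sp}$ be the cuspidal summand produced in the proof of Lemma~\ref{L:global}: it lies in the generic (hence tempered) A-parameter $\Psi$, which has trivial global component group, is a generic discrete series at every real place, and satisfies $\Sigma^{\flat}_{v_1}\cong{\rm St}$. By the compatibility between isometry and similitude theta lifting recalled in \S\ref{S:triality} (globally, $\theta(\Sigma^{\flat})\subset f_1^{\ast}(\Theta(\Sigma))$ as automorphic forms on $\SO_8\subset\PGSO_8$), it suffices to prove that the isometry theta lift $\theta(\Sigma^{\flat})$ to the split $\SO_8$ is nonzero. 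For this I would apply the global Rallis inner product formula for the dual pair $\Sp\times\SO_8$ --- one step past the boundary, in the second-term range treated by Gan--Qiu--Takeda: $\theta(\Sigma^{\flat})\neq 0$ if and only if the local theta lift of $\Sigma^{\flat}_v$ to $\SO_8(k_v)$ is nonzero at every place $v$ and the partial standard $L$-function $L^{S}(s,\Sigma^{\flat},\std)$ (of degree $7$) is nonzero at $s=1$. The $L$-value condition holds because $\Psi$ is a generic A-parameter: $L^{S}(s,\Sigma^{\flat},\std)$ is then the product of the standard $\GL$-$L$-functions of the cuspidal constituents of $\Psi$, which by Jacquet--Shalika does not vanish at $s=1$ (and has a pole there precisely when the trivial character occurs in $\Psi$). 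The local condition is clear at the unramified and quadratic-twist-of-unramified places (generic principal series), and holds at the remaining places --- archimedean, $v_0$, and $v_1$ --- because there $\Sigma^{\flat}_v$ is a discrete series of $\Sp(k_v)$, whose first occurrence in the split even orthogonal tower is at most $8$.

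The step I expect to be the main obstacle is setting up and applying this global Rallis inner product formula correctly in the second-term range for the pair at hand, together with supplying the local nonvanishing at the finite place $v_0$, where $\Sigma^{\flat}_{v_0}$ is a possibly non-generic discrete series. A robust fallback, should this be delicate, is to note that both conditions in the Rallis inner product formula depend only on the near-equivalence class of $\Sigma^{\flat}$; that class contains the globally generic member $\Sigma^{\flat,\mathrm{gen}}$ --- cuspidal automorphic by Arthur's multiplicity formula \cite{A}, since the global component group of $\Psi$ is trivial --- whose theta lift to the split $\SO_8$ is globally generic (as $8=2\cdot 3+2$ lies in the range where theta lifting preserves genericity) and hence nonzero; the two conditions, and therefore $\theta(\Sigma^{\flat})\neq 0$, then follow for $\Sigma^{\flat}$ as well. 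In either case one obtains $\Theta(\Sigma)\neq 0$, which together with the cuspidality above establishes the lemma.
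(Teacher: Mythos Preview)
Your approach is essentially the paper's: cuspidality via the tower property and the Steinberg component at $v_1$, and nonvanishing via the Rallis inner product formula of \cite{GQT}. The paper applies the latter directly at the similitude level, checking (a) local nonvanishing of $\Sigma_v\to\PGSO_8(k_v)$ at every $v$, and (b) $L(s,\Sigma,\std)\neq 0$ at $s=1$ (which holds since ${\rm rest}(\Sigma)$ has a generic A-parameter). Your detour through isometry groups is harmless and leads to the same two conditions.

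Where you diverge from the paper, and make life harder for yourself, is the local nonvanishing at $v_0$. You try to argue via first occurrence bounds for discrete series of $\Sp$ in the split even orthogonal tower; this is not obviously $\leq 8$ in general (the conservation relation only controls the \emph{sum} over the two towers). The paper's argument is immediate: Lemma~\ref{L:theta} is invoked only when $\sigma=\Sigma_{v_0}\in\Irr^{\heartsuit}(\PGSp)$, which \emph{by definition} means $\sigma$ has nonzero theta lift to $\PGSO_8(k_{v_0})$; passing to isometry, any $\sigma^{\flat}\subset{\rm rest}(\sigma)$ then has nonzero lift to $\SO_8(k_{v_0})$. You should simply use this.

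Your fallback contains a genuine slip: the two conditions in the Rallis inner product formula do \emph{not} depend only on the near-equivalence class. The $L$-value condition does, but the local nonvanishing at $v_0$ depends on the actual local component $\Sigma^{\flat}_{v_0}$, which may differ between $\Sigma^{\flat}$ and a globally generic member of the same A-packet. So showing $\theta(\Sigma^{\flat,\mathrm{gen}})\neq 0$ does not, by this route, yield $\theta(\Sigma^{\flat})\neq 0$. Fortunately the fallback is unnecessary once you invoke $\sigma\in\Irr^{\heartsuit}(\PGSp)$ as above.
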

  \vskip 5pt
  
 \begin{proof}
 We first note that $\Theta(\Sigma)$ must be cuspidal because of the tower property of theta correspondence.
 Indeed, the Steinberg representation  $St_{v_1}$ of $\PGSp(k_{v_1})$ does not participate in the local theta correspondence with $\PGSO_6(k_{v_1})$, so that the global theta lift of $\Sigma$ to the split $\PGO_6$ is zero.  \vskip 5pt

  By \cite[Thm. 1.4(ii)]{GQT}, to show the nonvanishing of the global theta lift $\Theta(\Sigma)$, it suffices to show:
  \vskip 5pt
  
  \begin{itemize}
  \item For any place $v$ of $k$, the local component $\Sigma_v$ has nonzero theta lift to $\PGSO_8(k_v)$. This follows because unramified representations or generic representations have nonzero theta lifting to $\PGSO_8(k_v)$; this takes care of all places $v \ne v_0$, and for the place $v_0$, it follows by our assumption that $\Sigma_{v_0} = \sigma \in \Irr^{\heartsuit}(\PGSp)$. 
  \vskip 5pt
  
  \item The degree 7 standard L-function $L(s, \Sigma, std)$ is nonzero at $s=1$.  This follows because $\Sigma|_{\Sp}$  has a generic A-parameter.
  \end{itemize}
  
 By the Rallis inner product formula \cite[Thm. 1.4(ii)]{GQT}, we deduce that the global theta lift $\Theta(\Sigma)$ to $\PGSO_8$ is nonzero and the lemma is proved.
 \end{proof}
 
 \vskip 5pt
 
 We may thus consider the global Spin lifting of $\Sigma$:
 \[ \spin_*(\Sigma) =  \mathcal{A}(f_2^*(\Theta(\Sigma)). \]
 Then $\spin_*(\Sigma)$ is  in fact a cuspidal representation of $\GL_8$ (because of the spin-regularity of the infinitesimal character at infinite places).  Applying the results of Kret-Shin, as recounted in \S \ref{SS:KS}, and arguing as in the proof of Proposition \ref{P:KS}, we conclude the proof of  Theorem \ref{T:exist}.
 \end{proof}
 
 \vskip 10pt
 
 \subsection{\bf  Construction of $\mathcal{L}_w$} 
 By Theorem \ref{T:exist}, we have shown:
 \vskip 5pt
 
 \begin{thm} \label{T:main-appendix}
 Let 
 \[  \mathcal{L}_w: \Irr(\PGSp) \longrightarrow \Phi_w(\PGSp) \]
 be defined by
 \[  \mathcal{L}_w (\sigma) := \text{the unique  $\phi$ associated to $\sigma$ by Theorem \ref{T:exist}} \]
 for each $\sigma \in \Irr(\PGSp)$. Then $\mathcal{L}_w$ features in and is characterized by the following commutative diagram:
 \[  \begin{CD}
 \Irr(\PGSp) @>\mathcal{L}_w>>  \Phi_w(\PGSp) \\
 @V{\rm rest} \times \spin_*VV  @VV{\rm std}_* \times \spin_*V \\
 \Irr(\Sp) \times \Irr(\GL_8) @>>\mathcal{L}_{\Sp} \times \mathcal{L}_{\GL_8}> \Phi(\Sp) \times \Phi(\GL_8) 
 \end{CD} \]
 \vskip 5pt
 \noindent Moreover,  the map $\mathcal{L}_w$ sends tempered (respectively discrete series) representations to (weak equivalence classes of ) tempered (respectively discrete series) L-parameters and satisfies 
 \[  \mathcal{L}_w(\sigma \otimes \chi) \cong \mathcal{L}_w(\sigma) \otimes \chi \]
  for any quadratic character $\chi$ of $F^{\times}$. 
  \end{thm}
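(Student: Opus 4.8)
The plan is to obtain Theorem \ref{T:main-appendix} as an essentially formal consequence of Theorem \ref{T:exist}, supplemented by two elementary observations about the isogeny $\std\colon\Spin_7(\C)\to\SO_7(\C)$ and about the spin representation. First I would note that $\mathcal{L}_w$ is well defined: Theorem \ref{T:exist} provides, for each $\sigma$, a class $\phi\in\Phi_w(\PGSp)$ which is unique with the two properties $\std\circ\phi=\mathcal{L}_{\Sp}({\rm rest}(\sigma))$ and $\spin\circ\phi=\mathcal{L}_{\GL_8}(\spin_*(\sigma))$; these two identities are precisely the assertion that the displayed square commutes, so $\mathcal{L}_w$ features in it. For the characterization, I would argue that any $\mathcal{L}'\colon\Irr(\PGSp)\to\Phi_w(\PGSp)$ making the square commute sends $\sigma$ to a class satisfying the two defining identities of Theorem \ref{T:exist}; since $(\std_*,\spin_*)\colon\Phi_w(\PGSp)\hookrightarrow\Phi(\Sp)\times\Phi(\GL_8)$ is injective by the very definition of $\Phi_w(\PGSp)$ — equivalently by the uniqueness clause of Theorem \ref{T:exist} — this forces $\mathcal{L}'=\mathcal{L}_w$.

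Next I would treat preservation of temperedness and of the discrete series condition. If $\sigma$ is tempered (resp.\ discrete series), then so is ${\rm rest}(\sigma)$, since the image of $\Sp(F)$ has finite index in $\PGSp(F)$; hence $\phi^{\flat}:=\std\circ\phi=\mathcal{L}_{\Sp}({\rm rest}(\sigma))$ is a bounded (resp.\ discrete) parameter of $\Sp$. Because $\std$ is a finite covering — in particular a proper map with fibres of order dividing $2$ — the set $\phi(W_F)$ is relatively compact exactly when $\phi^{\flat}(W_F)$ is, and $Z_{\Spin_7(\C)}(\phi)$ maps into $Z_{\SO_7(\C)}(\phi^{\flat})$ with finite fibres; both conditions therefore transfer from $\phi^{\flat}$ to $\phi$. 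Since these conditions depend only on $\std\circ\phi$, they are well defined on the weak equivalence class, and we conclude that $\mathcal{L}_w$ respects temperedness and the discrete series condition.

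Finally I would establish the quadratic-twist identity. View a quadratic character $\chi$ of $F^{\times}$ as a character of $\PGSp(F)$ via the similitude; it is trivial on the image of $\Sp(F)$, so ${\rm rest}(\sigma\otimes\chi)={\rm rest}(\sigma)$. On the dual side, twisting a parameter valued in $\Spin_7(\C)$ by $\chi$ amounts to multiplying by $\chi\in Z(\Spin_7(\C))=\mu_2=\ker(\std)$, whence $\std\circ(\phi\otimes\chi)=\std\circ\phi$; and since the nontrivial central element acts by $-1$ on the $8$-dimensional spin representation, $\spin\circ(\phi\otimes\chi)=(\spin\circ\phi)\otimes\chi$. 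Combined with the identity $\spin_*(\sigma\otimes\chi)\cong\spin_*(\sigma)\otimes\chi$, this shows that $\phi\otimes\chi$ satisfies the defining identities of Theorem \ref{T:exist} for $\sigma\otimes\chi$, so $\mathcal{L}_w(\sigma\otimes\chi)=\mathcal{L}_w(\sigma)\otimes\chi$ by uniqueness. The only genuine work here — and the step I expect to be the main obstacle — is proving $\spin_*(\sigma\otimes\chi)\cong\spin_*(\sigma)\otimes\chi$ for \emph{all} $\sigma\in\Irr(\PGSp)$, rather than just for generic discrete series where it is Proposition \ref{P:key}(ii); I would verify it directly from the construction of $\spin_*$ given above, twisting the intermediate $\PGSO_8$-theta lift, the Jacquet--Langlands transfer, and the $\GL_8$-transfer by the appropriate characters, or alternatively rerun the global argument of Proposition \ref{P:key}(ii) using the globalization of Lemma \ref{L:global} in the discrete series case together with the explicit descriptions of $\spin_*$ on non-discrete-series representations. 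Everything else is bookkeeping with the isogeny $\Spin_7(\C)\to\SO_7(\C)$ and with the definition of weak equivalence.
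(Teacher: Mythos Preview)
Your proposal is correct and follows essentially the same route as the paper: the commutative diagram and its characterizing property are immediate from Theorem \ref{T:exist} and the definition of $\Phi_w(\PGSp)$, and the only substantive point is the twisting identity $\spin_*(\sigma\otimes\chi)\cong\spin_*(\sigma)\otimes\chi$, which the paper proves exactly as you suggest, directly from the construction of $\spin_*$ via the similitude theta lift to $\PGSO_8$. The one concrete mechanism you should be aware of for that step is that the similitude character of $\PGSO_8$ arising in the theta correspondence is trivial on $f_1(\SO_8(F))$ but \emph{not} on $f_2(\SO_8(F))$, so the twist by $\chi$ survives the pullback $f_2^*$ as a twist by $\chi$ composed with the spinor norm, and hence propagates through the transfer $\mathcal{A}$ to $\GL_8$ as $\chi\circ\det$.
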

\vskip 5pt

\begin{proof}
It remains to verify the last statement: the compatibility of $\mathcal{L}_w$ with twisting by quadratic characters, 
It comes down to the fact that the similitude theta correspondence 
is compatible with such twisting. More precisely, if $\sigma \in \Irr^{\heartsuit}(\PGSp)$, then 
\[  \theta^{\heartsuit}(\sigma\otimes \chi) = \theta^{\heartsuit}(\sigma) \otimes \chi . \]
Here, it is important to note that the similitude character   of $\PGSO_8$ implicit on the right hand side of the equation is the one which is trivial on $f_1(\SO_8(F))$. In particular, it is not trivial on $f_2(\SO_8(F))$ (see \S \ref{SS:triality} for $f_1$ and $f_2$).  
Thus, for any representation $\pi$ of $\PGSO_8(F)$,
\[  f_2^*(\pi \otimes \chi )) = f_2^*(\pi) \otimes \chi  \]
where we regard $\chi$ as a character of $\SO_8(F)$ via composition with the spinor norm. 
Since the LLC for $\SO_8$ is compatible with twisting by quadratic characters, we deduce that
\[  \spin_*(\sigma \otimes \chi )  =  \spin_*(\sigma) \otimes  (\chi \circ \det) \in \Irr (\GL_8)  \]
For $\sigma \in \Irr^{\spadesuit\clubsuit}(G)$, one verifies this identity along similar lines; we omit the details. 
\end{proof}

\vskip 10pt

\subsection{\bf Fibers of $\mathcal{L}_w$} 
Finally, we would like to understand the fibers of $\mathcal{L}_w$. We shall show:
\vskip 5pt

\begin{thm}
(i) The map $\mathcal{L}_w$  is surjective. Moreover, the natural restriction map
 \[  {\rm rest}: \mathcal{L}_w^{-1}(\phi)  \longrightarrow \mathcal{L}^{-1}_{\Sp} (\std \circ \phi)_{/\PGSp} \]
 is surjective.

\vskip 5pt

(ii) The fibers of $\mathcal{L}_w$ are unions of Xu's packets (introduced in \S \ref{S:Xu}).
\end{thm}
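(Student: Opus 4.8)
The plan is to reduce both assertions to results already in hand: the surjectivity of $\mathcal{L}^{\diamondsuit}$ together with Xu's packet structure on the $\heartsuit$-side, and the explicit theta computations on the $\spadesuit\clubsuit$-side. For part (i), I would argue by cases according to the decomposition $\Irr(\PGSp) = \Irr^{\heartsuit}(\PGSp) \sqcup \Irr^{\spadesuit\clubsuit}(\PGSp)$. Given $\phi \in \Phi_w(\PGSp)$, set $\phi^{\flat} = \std \circ \phi \in \Phi(\Sp)$; by the LLC for $\Sp$ (Arthur), the packet $\Pi_{\phi^{\flat}}$ is nonempty, and since the representations in it have trivial central character precisely when $\phi^{\flat}$ lifts to $\Spin_7(\C)$ (which holds as $\phi$ is assumed to be a $\PGSp$-parameter), we may form $\tilde\Pi_{\phi^{\flat}}$ as in~\eqref{E:flat}. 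The key point is that the Kret-Shin parametrization in Theorem~\ref{T:exist} is visibly compatible with restriction to $\Sp$: any $\sigma$ with $\mathcal{L}_w(\sigma) = \phi$ satisfies ${\rm rest}(\sigma) = {\rm rest}(\Pi_{\phi^{\flat}})$. So the content of (i) is the existence of \emph{some} $\sigma \in \tilde\Pi_{\phi^{\flat}}$ with $\spin_*(\sigma)$ matching $\spin \circ \phi$, together with surjectivity of ${\rm rest}$ onto the $\PGSp$-orbits inside $\Pi_{\phi^{\flat}}$.

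For the existence, I would split further. If $\phi$ factors through $j: \Spin_6(\C) \hookrightarrow \Spin_7(\C)$, i.e.\ through the dual group of $\PGL_4$, then Case (b) of the proof of Theorem~\ref{T:exist} runs in reverse: the corresponding $\PGL_4$-parameter (up to $\Z/2\Z$) produces a representation of $\PGL_4 \rtimes \Z/2\Z$ whose theta lift back to $\PGSp$ lands in $\Irr^{\spadesuit\clubsuit}(\PGSp)$ and realizes $\phi$ as a Kret-Shin parameter; here one uses the computed behaviour of the similitude theta correspondence for $\PGSp \times \PGO_6$ and the conservation relation. If instead $\phi$ is $\diamondsuit$, i.e.\ does not factor through such a Levi and $\phi$ is in the image of $\iota: G_2(\C) \to \Spin_7(\C)$ (which is forced for discrete $\phi$ once $\spin \circ \phi$ has the right form, by the Chenevier/Griess group theory used in Proposition~\ref{P:KS2}), then Proposition~\ref{P:cuspidal_existence} and Proposition~\ref{P:fibers} already produce the distinguished Xu packet $\tilde\Pi^{X\ast}_{\phi^{\flat}}$ with the right spin lifting, giving a preimage; for non-discrete $\phi$ one uses the explicit description of theta lifts of non-tempered/non-discrete representations of $\PGSp$ via Levi subgroups, as indicated in the proof of Theorem~\ref{T:exist}. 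Surjectivity of ${\rm rest}: \mathcal{L}_w^{-1}(\phi) \to \mathcal{L}^{-1}_{\Sp}(\phi^{\flat})_{/\PGSp}$ then follows from Xu's property (b) of \S\ref{S:Xu}: within a single Xu packet restriction is already a bijection onto $\Pi_{\phi^{\flat}}/_{\PGSp}$, and all Xu packets inside $\tilde\Pi_{\phi^{\flat}}$ share this property.

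For part (ii), I would show that $\mathcal{L}_w$ is constant on each Xu packet $\tilde\Pi^X_{\phi^{\flat}}$. By Theorem~\ref{T:exist} the parameter $\mathcal{L}_w(\sigma)$ is determined by the pair $({\rm rest}(\sigma), \spin_*(\sigma))$; the first coordinate is constant on a Xu packet by construction (all members restrict into $\Pi_{\phi^{\flat}}$), so it suffices to see $\spin_*$ is constant on $\tilde\Pi^X_{\phi^{\flat}}$. This I would deduce globally, exactly as in Lemma~\ref{L:allin}: globalize $\sigma \in \tilde\Pi^X_{\phi^{\flat}}$ to a cuspidal $\Sigma$ with a Steinberg place, so its $\Sp$-restriction has a generic $A$-parameter and trivial global component group; then Xu's Arthur multiplicity formula lets one swap the local component at a chosen place for any other member $\sigma'$ of the \emph{same} Xu packet, producing $\Sigma'$ with $\Sigma'_v \cong \Sigma_v$ away from that place and hence $L^S(s,\Sigma',\spin) = L^S(s,\Sigma,\spin)$; comparing the global spin liftings $\spin_*(\Sigma)$ and $\spin_*(\Sigma')$ (both isobaric on $\GL_8$, nearly equivalent) and invoking strong multiplicity one for $\GL_8$ forces $\spin_*(\sigma') \cong \spin_*(\sigma)$. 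The main obstacle I anticipate is precisely the verification that the global swap within a Xu packet — which is a statement about Arthur's multiplicity formula for the \emph{similitude} group $\PGSp$ as packaged by Xu — is legitimate in the generality needed (including the non-discrete-series members of a Xu packet, where one must also control that no spurious non-cuspidal contribution appears in $\spin_*(\Sigma')$); this is where one leans hardest on \cite{Xu1,Xu2,Xu3} and on the cuspidality criterion supplied by the Steinberg local component via the tower property for $\PGSp \times \PGO_6$.
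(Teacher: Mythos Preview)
Your approach to (ii) is essentially the paper's: globalize $\sigma$ with a Steinberg place, invoke Xu's global multiplicity formula to swap the local component at one place for another member $\sigma'$ of the same Xu packet, then compare the global spin liftings on $\GL_8$ (via the theta lift to $\PGSO_8$, pulled back through $f_2$) and use near equivalence to force $\spin_*(\sigma') = \spin_*(\sigma)$. The paper packages the needed nonvanishing and cuspidality of $\Theta(\Sigma)$ and $\Theta(\Sigma')$ into its Lemmas on globalization (the analogues of your Lemma~\ref{L:global} and Lemma~\ref{L:theta}), so your anticipated obstacle is handled exactly as you suggest.

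For (i), however, your argument has a genuine gap and misses a much simpler route. Your case split --- $\phi$ factors through $\Spin_6(\C)$, or $\phi$ factors through $G_2(\C)$ --- does not exhaust $\Phi_w(\PGSp)$: a general discrete $\phi \in \Phi_{ds}(\PGSp)$ need not factor through either subgroup, since $\spin\circ\phi$ need not contain the trivial representation. So Proposition~\ref{P:cuspidal_existence} and Proposition~\ref{P:fibers} (which concern $G_2$-valued parameters) cannot be invoked for such $\phi$, and your reverse-theta argument on the $\spadesuit\clubsuit$ side does not reach them either.

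The paper avoids this entirely by exploiting the twisting compatibility $\mathcal{L}_w(\sigma\otimes\chi)=\mathcal{L}_w(\sigma)\otimes\chi$ established in Theorem~\ref{T:main-appendix}. Given $\phi$, pick any $\sigma^\flat\in\Pi_{\phi^\flat}$ and any $\sigma\in\Irr(\PGSp)$ containing it on restriction; then $\mathcal{L}_w(\sigma)$ is \emph{some} lift of $\phi^\flat$, hence equals $\phi\otimes\chi$ for a quadratic $\chi$, and $\sigma\otimes\chi$ is the desired preimage. Since ${\rm rest}(\sigma\otimes\chi)={\rm rest}(\sigma)\ni\sigma^\flat$, the same stroke gives surjectivity of the restriction map, with no appeal to Xu's property~(b) or to part~(ii). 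This is both shorter and covers all $\phi$ uniformly.
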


\vskip 5pt

\begin{proof}
(i)  For given $\phi \in \Phi_w(\PGSp)$, with $\phi^{\flat} := \std \circ \phi \in \Phi(\Sp)$, write 
\[  \Pi_{\phi^{\flat}} :=  \mathcal{L}^{-1}_{\Sp} (\std \circ \phi) \]
  be the corresponding L-packet for $\Sp$, which is  non-empty. Pick $\sigma^{\flat} \in \Pi_{\phi^{\flat}}$. Because $\phi^{\flat}$ can be lifted to $\Spin_7(\C)$, the representations in $\Pi_{\phi^{\flat}}$ have trivial central character. Hence we can pick 
 $\sigma \in \Irr(\PGSp)$ such that $\sigma^{\flat} \subset {\rm rest}(\sigma) $. Then $\mathcal{L}_w(\sigma)$ is a lift of $\phi^{\flat}$, so that  
 \[  \mathcal{L}_w(\sigma) = \phi \otimes \chi \]
  for some quadratic character $\chi$. Hence $\mathcal{L}(\sigma \otimes \chi ) = \phi$. This proves the first assertion.
Indeed, the proof shows that for any $\sigma^{\flat} \in \Pi_{\phi^{\flat}}$, there exists $\sigma \in \Irr(\PGSp)$ such that $\sigma^{\flat} \subset {\rm rest}(\sigma)$ and 
$\mathcal{L}_w(\sigma) = \phi$.  This gives the second assertion.
 \vskip 5pt
 
 (ii) It is not hard to reduce this issue to the case of a discrete series L-parameter $\phi$.
 Moreover, we shall assume that $\phi$ does not factor through $\Spin_6(\C)$ so that 
 \[  \mathcal{L}_w^{-1}(\phi)  \subset \Irr^{\diamondsuit}(\PGSp). \]
 The case when $\mathcal{L}_w^{-1}(\phi) \subset \Irr^{\spadesuit\clubsuit}(\PGSp)$ is checked along similar lines and we leave it to the reader.
\vskip 5pt

Suppose then that $\sigma \in \mathcal{L}_w^{-1}(\phi)$. Writing $\phi^{\flat} = {\rm std} \circ \phi$, we know that $\sigma$ lies in a unique Xu's packet $\tilde{\Pi}^X_{\phi^{\flat}}$, where we are using the notations from \S \ref{S:Xu}. Thus, we need to show that
\[  
\tilde{\Pi}^X_{\phi^{\flat}} \subset \mathcal{L}_w^{-1}(\phi). \]
By Lemma \ref{L:global}, we globalize $\sigma$ to a cuspidal automorphic representation $\Sigma$ over a number field $k$ such that $\Sigma_{v_0} = \sigma$ and $\Sigma_{v_1} = {\rm St}_{v_1}$ is the Steinberg representation; moreover, the global theta lift $\Theta(\Sigma)$ of $\Pi$ to $\PGSO_8$ is a nonzero cuspidal representation by Lemma \ref{L:theta}. 
\vskip 5pt

As we saw in the proof of Lemma \ref{L:global}, ${\rm rest}(\Sigma)$ gives rise to a generic A-parameter $\Psi^{\flat}$ with trivial global component group $S_{\Psi^{\flat}}$.
 Now we appeal to the global results of Xu \cite[Thm. 4.1]{Xu3}. Under these circumstances, Xu showed that $\Sigma$ belongs to a global generic A-packet for $\PGSp$ of the form
\[   \tilde{\Pi}^X_{\Psi^{\flat}}  = \otimes_v \tilde{\Pi}_{\Psi^{\flat}_v}^X  \]
where the local packets $\tilde{\Pi}_{\Psi^{\flat}_{v}}^X$ were introduced in \S \ref{S:Xu}. The contribution of 
$ \tilde{\Pi}^X_{\Psi^{\flat}} $  to the automorphic discrete spectrum is governed by the Arthur multiplicity formula. In particular, since the global component group $S_{\Psi^{\flat}}$ is trivial, \cite[Thm. 4.1]{Xu3} says that every element of $\tilde{\Pi}^X_{\Psi^{\flat}}$ is automorphic.     
\vskip 5pt

Now we know that $\sigma = \Sigma_{v_0} \in \Pi_{\Psi^{\flat}_{v_0}}^X$  and we would like to show that
\[   \tilde{\Pi}_{\Psi^{\flat}_{v_0}}^X  \subset \mathcal{L}_w^{-1}(\phi).  \]
 For any $\sigma'  \in \tilde{\Pi}_{\Psi^{\flat}_{v_0}}^X$, we thus need to show that 
\[  \mathcal{L}_w(\sigma') = \mathcal{L}_w(\sigma) = \phi. \]
 For this, let $\Sigma'$ be the representation of $PGSp_6(\mathbb{A})$ such that
\[  \Sigma'_{v_0} = \sigma' \quad \text{and} \quad \Sigma'_v = \Sigma_v \, \, \text{for all $v \ne v_0$.}\]
As we noted above,  $\Sigma'$ is cuspidal automorphic as well. Moreover, as we showed in the proof of Lemma \ref{L:theta}, the global theta lift $\Theta(\Sigma')$ to $\PGSO_8$ is nonzero.  
\vskip 5pt

Now we have two cuspidal representations $\Theta(\Sigma)$ and $\Theta(\Sigma')$ on $\PGSO_8$ which are nearly equivalent. Hence
\[  \mathcal{A} (f_2^*(\Theta(\Sigma))) \,\, \text{ and } \, \,  \mathcal{A} (f_2^*(\Theta(\Sigma')))  \]
give the same generic A-parameter on $\GL_8$. In particular, their local component at $v_0$ are the same.
Likewise,  both $f_1^*(\Theta(\Sigma))$ and $f_1^*(\Theta(\Sigma'))$ have A-parameter  $1 \oplus \Psi^{\flat}$.
Hence, on extracting the $v_0$-component,     it follows that $\mathcal{L}_w(\sigma') = \mathcal{L}_w(\sigma) = \phi$, as desired.
 \end{proof}
\vskip 5pt

As one believes that Xu's packets are precsiely the L-packets of $\PGSp$, the map $\mathcal{L}_w$ is not the ultimate LLC map. However,
it is a strict refinement of the results of Xu in the setting of $\PGSp$. Indeed, as we showed in the main body of the paper (see Proposition \ref{P:KS2}), 
if $\phi \in \Phi(\PGSp)$  is valued in $G_2(\C)$, then $\mathcal{L}_w^{-1}(\phi)$ is a single Xu's packet and hence is an honest L-packet of $\PGSp$. This fact plays a crucial role in our proof of the LLC for $G_2$.  
\vskip 10pt

  \vskip 5pt
 
 \noindent{\bf Acknowledgments:} 
The final part of this work was completed when the authors participated in a Research in Teams (RIT) project ``Modular Forms and Theta Correspondence for Exceptional Groups" hosted by the Erwin Schrodinger Institute, University of Vienna. We thank the Erwin Schrodinger Institute for its generous support and wonderful working environment, as well as our other RIT members Nadya Gurevich and Aaron Pollack for helpful discussions.  We would like to thank B. Bekka and M. Tadi\'c for their help with the Fell topology and G. Chenevier for his help with the unacceptability of $\Spin_7$. Finally, we are grateful to Benedict Gross, whose inspirational ideas on the exceptional theta correspondence have been sustaining our work for the past 25 years.
W.T. Gan is partially supported by a Singapore government MOE Tier 1 grant R-146-000-320-114.  G. Savin is partially supported by 
 a   National Science Foundation grant DMS-1901745.

\vskip 15pt

\end{document}